\newtheorem{theorem}{Theorem}[section]
\newtheorem{lemma}[theorem]{Lemma}
\newtheorem{proposition}[theorem]{Proposition}
\newtheorem{corollary}[theorem]{Corollary}
\newtheorem{definition}[theorem]{Definition}
\newtheorem{remark}[theorem]{Remark}
\newtheorem{example}[theorem]{Example}
\newtheorem{notation}[theorem]{Notation}
\newlength{\blackboxsize} \blackboxsize=1.2ex
\def\blackbox{\hbox{\vrule height  \blackboxsize 
width  \blackboxsize depth 0ex}}
\newenvironment{proof}[1][]%
 {\def\proof@temp{#1}\par\noindent
  \textsc{Proof}\ifx\proof@temp\@empty\else\ (#1)\fi\hspace{1em}}
 {~~ ~~\hfill\blackbox\par\vspace{.5\baselineskip}}
\def\operatorname#1{\mathop{\operator@font #1}\nolimits}%
\newcommand{\map}[1][]{\stackrel{#1}{\longrightarrow}}
\newcommand{\isom}[1][]{\stackrel{#1}{\simeq}}
\newcommand{\suchthat}{\mid}
\newcommand{\half}{{\textstyle{1\over2}}}
\newcommand{\quarter}{{\textstyle{1\over4}}}
\newcommand{\B}{\mathbb{B}}
\newcommand{\C}{\mathbb{C}}
\newcommand{\Z}{\mathbb{Z}}
\newcommand{\R}{\mathbb{R}}
\newcommand{\g}{\mathfrak{g}}
\newcommand{\K}{\mathfrak{k}}
\newcommand{\p}{\mathfrak{p}}
\newcommand{\h}{\mathfrak{h}}
\newcommand{\Id}{\mathrm{Id}}
\newcommand{\Mpc}{Mp^\cc}
\newcommand{\mpc}{\mathfrak{mp}^{\cc}}
\newcommand{\MUc}{MU^{\cc}}
\newcommand{\muc}{\mathfrak{mu}^{\cc}}
\renewcommand{\sp}{\mathfrak{sp}}
\newcommand{\su}{\mathfrak{su}}
\renewcommand{\u}{\mathfrak{u}}
\newcommand{\cc}{{\rm\scriptstyle c}}
\renewcommand{\H}{\mathcal{H}}
\newcommand{\Ad}{\operatorname{Ad}}
\newcommand{\ad}{\operatorname{ad}}
\newcommand{\Det}{\operatorname{Det}}
\newcommand{\End}{\operatorname{End}}
\newcommand{\Hom}{\operatorname{Hom}}
\newcommand{\id}{\mathrm{Id}}
\newcommand{\Tr}{\operatorname{Tr}}
\newcommand{\setdef}[2]{\left\{{#1}\suchthat{#2}\right\}}
\newcommand{\onehalf}{\mbox{$\frac12$}}
\renewcommand{\Im}{\mathop{\mathcal{I}m}\nolimits}
\newcommand{\jtilde}{\wt{\jmath}}
\newcommand{\Jtilde}{\wt{J}}
\newcommand{\jbar}{\ol{\jmath}\,}
\let\ccirc\circ
\def\circ{{\ensuremath\ccirc}}
\def\cyclic{\mathop{\kern0.9ex{{+}
\kern-2.2ex\raise-.28ex\hbox{\Large\hbox
{$\circlearrowright$}}}}\limits}
\let\ol\overline
\let\ul\underline
\let\wt\widetilde
\let\oldcirc\circ
\renewcommand{\circ}{\,{\scriptstyle\oldcirc}\,}
\def\hours{\two@digits\hour}
\def\minutes{\two@digits\minute}
\def\Day{\two@digits\day}\def\Month{\two@digits\month}
\begin{document}
\selectlanguage{british}
%%
%% Top matter
%%
\title{On $\Mpc$-structures and  Symplectic Dirac Operators 
%pseudo-K\"{a}hlerian  and bi-Lagrangian  symmetric spaces 
}
\author{\small
Michel Cahen$^{(1)}$,
Simone Gutt$^{(1,2)}$, Laurent La Fuente Gravy$^{(1)}$ and John Rawnsley$^{(3)}$\\
\scriptsize{mcahen@ulb.ac.be,  sgutt@ulb.ac.be, Laurent.La.Fuente.Gravy@ulb.ac.be, J.Rawnsley@warwick.ac.uk}\\
\footnotesize{(1)  D\'{e}partement de Math\'{e}matique, Universit\'{e} Libre de Bruxelles}\\[-7pt]
\footnotesize Campus Plaine, CP 218, Boulevard du Triomphe\\[-7pt]
\footnotesize BE -- 1050 Bruxelles, Belgium\\[5pt]
\footnotesize (2) Universit\'e de Lorraine\\ [-7pt]
\footnotesize Institut Elie Cartan de Lorraine, UMR 7502,\\[-7pt]
\footnotesize Ile du Saulcy, F-57045 Metz, France. \\[5pt]
\footnotesize (3) Mathematics Institute, University of Warwick\\[-7pt]
\footnotesize Coventry, CV4 7AL, United Kingdom\\
}

\date{~\\{\small {July 2013}}}

\maketitle
\thispagestyle{empty}

\vspace{0.5cm}

\begin{abstract}
 We prove that the kernels of the restrictions of symplectic Dirac or 
symplectic Dirac-Dolbeault operators on natural subspaces of polynomial
valued spinor fields are finite dimensional on a compact symplectic manifold.
We compute those kernels for the complex projective spaces.
We construct injections of  subgroups of the symplectic group (the pseudo-unitary group
and the stabilizer of a Lagrangian subspace) in the group $\Mpc$
and classify  $G$-invariant $\Mpc$-structures on symplectic  spaces
with a $G$-action.
We prove a variant of Parthasarathy's formula for the commutator of
 two symplectic Dirac-type  operators on a symmetric symplectic space.
\end{abstract}

%%
%% To generate a Table of Contents
%%
%\end{document}

\newpage
\thispagestyle{plain}
\setcounter{page}{1}
\renewcommand{\thepage}{\roman{page}}
\tableofcontents

\newpage

\setcounter{page}{1}
\renewcommand{\thepage}{\arabic{page}}

%***********************************************************************%
\section{Introduction}
This paper is a contribution to the growing literature on Dirac
operators in a symplectic context. In order for it to be self-contained,
we have included all relevant definitions and constructions. The first
introduction of symplectic spinors on a symplectic manifold $(M,\omega)$
was made by Kostant \cite{refs:Kostant} using an auxiliary metaplectic
structure. Metaplectic structures do not always exist, there is a
topological obstruction of being a spin manifold so that important
examples such as $\C P^{2n}$ are not metaplectic. In the metaplectic
setting, using a symplectic connection, K. Habermann was the first to
introduce \cite{refs:KHabermann} the notion of a symplectic Dirac
operator $D$. A few years later, K. and L. Habermann used a compatible
positive almost complex structure $J$ on $(M,\omega)$ and a linear
connection preserving $J$ and $\omega$ (which may have torsion)  to define
 a second Dirac operator $\widetilde{D}$, \cite{refs:Habermanns}. They
showed further that the commutator $[D, \widetilde{D}]$ is elliptic. In
\cite{refs:CahGuttRaw}, three of the present authors modified
Habermann's construction using an $\Mpc$-structure on $(M,\omega)$ which
always exists and an $\Mpc$-connection to define symplectic Dirac
operators on any symplectic manifold. The introduction of a compatible
positive almost complex structure $J$ on $(M,\omega)$ and an
$\MUc$-connection allows again the construction of a second operator
$D_J$. The commutator $[D, D_J]$ is again elliptic. In
\cite{refs:CahGutt}, we observed that similar constructions could be
performed on symplectic manifolds which admit a field $A$ of
infinitesimally symplectic endomorphisms of the tangent bundle, yielding
an operator $D_A$. It can be similarly extended in a Riemannian or
pseudo-Riemannian context.

A compatible positive almost complex structure $J$ gives a splitting of
the complexified tangent bundle, which led  in  \cite{refs:CahGuttRaw}
to the definition of two partial Dirac operators $D'$ and $D''$ such that
\[
D=D'+D'' \qquad\qquad\qquad D_J=i(-D'+D'').
\]
These operators were rediscovered in \cite{refs:Korman}  in the
framework of K\"{a}hler manifolds and named symplectic Dolbeault
operators.  We shall call them here symplectic Dirac--Dolbeault
operators. When the connection is chosen so that the  torsion vector of
the induced linear connection vanishes, the operator $D''$ is the formal
adjoint of $D'$. One can always choose such a connection (but not
uniquely).

The  choice of $J$ defines, in the space of spinor fields, a dense
subspace consisting of polynomial-valued spinor fields. The operator
$D'$ raises the degree of a polynomial valued spinor field by $1$
whereas $D''$ lowers this degree by $1$. The commutator is  an elliptic
operator which preserves the degree. Theorem \ref{theor:KerDgen} proves
that the kernel of the operator $D$ restricted to spinor fields with
values in polynomials of degree at most $k$ is finite dimensional for
all $k$ on any compact symplectic manifold, for any choices made (i.e.
any compatible positive almost complex structure $J$, any
$\Mpc$-structure and any $\MUc$-connection with vanishing torsion
vector).

One way to limit the degree of arbitrariness in the construction is to
restrict oneself to an invariant situation. We consider symplectic
manifolds endowed with a symplectic action of a Lie group $G$ and we
study in Section \ref{mpcinv} the construction and classification of
$G$-invariant $\Mpc$-structures. Although $\Mpc$-structures exist on any
homogeneous manifolds, it is not guaranteed that homogeneous
$\Mpc$-structures exist on any homogeneous manifolds. As could be
guessed, the existence of a $G$-invariant $\Mpc$-structure on a
$G$-homogeneous symplectic manifold is equivalent to the existence of a
lift of the isotropy representation to the group $\Mpc$. In Section
\ref{subsection:lift}, we give necessary and sufficient conditions for
the existence of a lift of a subgroup $H$ of the symplectic group into
the group $\Mpc$. Proposition  \ref{lemma:split} gives such a lift for
the pseudo-unitary group, and Proposition \ref{prop:liftLagr} for the
subgroup preserving a  complex Lagrangian subspace.

The last part of the paper deals with Dirac operators on symplectic
symmetric spaces. Proposition \ref{prop:Parth} gives a Parthasarathy
type formula for the operators of the type $[D',D'']$ and exhibit the
dependence in the character of the isotropy group which characterizes
the choice of the invariant $\Mpc$-structure. Section
\ref{sect:spectrum} gives  the spectrum of the operator $[D',D'']$ and
the kernels of the operators $D$, $D'$ and $D''$ restricted to
polynomial valued spinor fields on complex projective spaces.

\section*{Acknowledgements}
This work benefited from an Action de Recherche Concert\'ee of the
Communaut\'e fran\c{c}aise de Belgique.

%***********************************************************************%
\section{Subgroups of $Sp(V,\Omega)$ and lifts to $\Mpc(V,\Omega,j)$}

%%%%%%%%%%%%%
\subsection{The symplectic Clifford algebra}

Let $(V,\Omega)$ be a finite-dimensional real symplectic vector space of
dimension $2n$. The {\emph{symplectic Clifford Algebra}} $Cl(V,\Omega)$
is the associative unital complex algebra generated by $V$ with the
relation
\begin{equation}
 u\cdot v -v\cdot u=
\frac{i}{\hbar}\Omega(u,v) 1
\end{equation}
where $h$ is a positive real number and $\hbar=\frac{h}{2\pi}$.

A symplectic spinor space is a vector space carrying a representation of
the symplectic Clifford algebra. This representation, called Clifford
multiplication and denoted by $cl$, is derived from an irreducible
unitary representation of the Heisenberg group. There are many ways to
construct such a representation; in the next subsection we describe a
particular form of the Fock representation which is adapted to our
applications.

%%%%%%%%%%%%%%

\subsection{The Fock representation of the Heisenberg group}
Let $(V,\Omega)$ be a finite-dimensional real symplectic vector space of
dimension $2n$. Consider the \textit{Heisenberg Lie group} 
$H(V,\Omega)$  whose underlying manifold is $V\times \R$ with
multiplication
\[
(v_1,t_1)(v_2,t_2) = (v_1+v_2, t_1+t_2 - \half\Omega(v_1,v_2)).
\]
Its Lie algebra $\h(V,\Omega)$ has underlying vector
space $V\oplus\R$ with bracket
\[
[(v, s), (w, s')] = (0, {}-\Omega(v,w))
\]
and is two-step nilpotent.
The exponential map is $\exp (v,s) = (v,s)$.

In any (continuous) unitary irreducible representation $U$ of the
Heisenberg group on a separable Hilbert space $\H$, the centre
$\{0\}\times \R$  acts by multiples of the identity: $(0,t) \mapsto
e^{i\lambda t}I_{\H}$ for some real number $\lambda$ and it is known
that any two irreducible unitary representations with the same non-zero
central parameter are unitarily equivalent (Stone--von Neumann
Uniqueness Theorem). Up to scaling, complex conjugation and equivalence
there is just one infinite dimensional unitary irreducible
representation fixed by specifying its parameter $\lambda$ which we
take as $\lambda = -1/\hbar$. Note that a representation of the Clifford
algebra corresponds to a representation of the Lie algebra of the
Heisenberg group with prescribed central character equal to
$-\frac{i}{\hbar}$.

The infinite dimensional unitary representation of the Heisenberg group
with central parameter $-1/\hbar$ can be constructed in a number of
ways, for example on $L^2(V/W)$ with $W$ a Lagrangian subspace of
$(V,\Omega)$ (Schr\"odinger picture). For our purposes it is most useful
to realise it on a Hilbert space of holomorphic functions (Fock
picture). For this we fix a positive compatible complex structure (PCCS)
$j$ on $(V,\Omega)$. 

\begin{definition}\label{def:Gj} 
A \textit{compatible complex structure} $\jtilde$ on $(V,\Omega)$ is a
(real) linear map of $V$ which is symplectic, $\Omega(\jtilde v,\jtilde
w) = \Omega(v,w)$, and satisfies $\jtilde\,^2=-\Id_V$. We denote by
$j(V,\Omega)$ the set of compatible complex structures. Given such a
$\jtilde \in j(V,\Omega)$, the map $(v,w) \mapsto
G_{\jtilde}(v,w):=\Omega(v,\jtilde w)$ is a non-degenerate symmetric
bilinear form. The symplectic group acts by conjugation on the space of
compatible complex structures and the orbits are characterised by the
signature of the form $G_{\jtilde}$. We say $\jtilde$ is
\textit{positive} if this form is positive definite. Let $j_+(V,\Omega)$
denote the set of PCCS.
\end{definition}

Picking $j \in j_+(V,\Omega)$ gives a complex Hilbert space
$(V,\Omega,j)$ of complex dimension $n= \frac12 \dim_{\R} V$
with the Hermitean structure
\begin{equation}
\langle v, w\rangle_j
 = \Omega(v,jw) - i \Omega(v,w)=G_j(v,w)-i\Omega(v,w), \qquad |v|_j^2 
 = \langle v,v\rangle_j
\end{equation}
(which is complex linear in the first argument, anti-linear in the
second).

We may consider the Hilbert space $\H(V,\Omega,j)$ of holomorphic
functions $f(z)$ on $(V,\Omega,j)$ which are $L^2$ in the sense that the
norm $\|f\|_j$ given by
\[
\|f\|_j^2 = h^{-n} \int_V |f(z)|^2 e^{-\frac{|z|_j^2}{2\hbar}}dz
\]
is finite, where $dz$ denotes the normalised Lebesgue volume on $V$ for
the norm \hbox{$|\cdot|_j$}. The Heisenberg group $H(V,\Omega)$ acts
unitarily and irreducibly on $\H(V,\Omega,j)$ by the representation
$U_j$ where
\[
(U_j(v,t)f)(z) = e^{-it/\hbar + \langle z,v\rangle_j/2\hbar
- |v|_j^2/4\hbar}f(z-v).
\]

The Heisenberg Lie algebra $\h(V,\Omega)$ then has a skew-Hermitean
representation on the dense subspace of smooth vectors
$\H(V,\Omega,j)^\infty$ of this representation. For results and references 
concerning the space of smooth vectors of a unitary representation of a 
Lie group see \cite{refs:Cartier}.
If
$f\in\H(V,\Omega,j)^\infty$ we have
\[
(\dot U_j(v,s) f)(z) 
= -\frac{is}{\hbar} f(z) + \frac1{2\hbar}\langle z,v\rangle_j f(z) 
- (\partial_zf)(v)
\]
where $(\partial_zf)(v)=\sum_{k=1}^nv^k\partial_{z^k}f$ denotes the
holomorphic derivative of $f$  in the direction of $v$. If we extend the
representation of $\h(V,\Omega)$ to its enveloping algebra then
$\H(V,\Omega,j)^\infty$ becomes a Fr\'{e}chet space with seminorms $f
\mapsto \|u\cdot f\|_j$ for $u$ in the enveloping algebra, and its dual
$\H(V,\Omega,j)^{-\infty}$ can be viewed as containing $\H(V,\Omega,j)$
so we have a Gelfand triple $\H(V,\Omega,j)^\infty \subset
\H(V,\Omega,j) \subset \H(V,\Omega,j)^{-\infty}$ on which the enveloping
algebra acts compatibly. Any of those spaces can be considered as
{\emph{symplectic spinor space}} with the corresponding representation
$cl$ of the symplectic Clifford algebra $cl(V,\Omega)$ defined  by
extending the Clifford multiplication  $cl(v) := \dot U_j(v,0)$:
\begin{equation}\label{eq:cl}
(cl(v)f)(z) = \frac1{2\hbar}\langle z,v\rangle_j f(z) - (\partial_zf)(v)
\end{equation}
which  satisfies
\[
cl(v)(cl(w) f) - cl(w)(cl(v) f) = \frac{i}{\hbar} \Omega(v,w) f.
\]
Note, in this definition $f$ is initially taken in the smooth vectors
$\H(V,\Omega,j)^\infty$ but $cl(V,\Omega)$ can be viewed as also acting on
$\H(V,\Omega,j)$ or $\H(V,\Omega,j)^{-\infty}$ in the distributional
sense with the same formulas.

%%%%%%%%%%%%

\subsection{Definition of the $\Mpc$ group }

We denote by 
$Sp(V,\Omega)$ the Lie group of invertible linear maps $g \colon V \to V$
such that $\Omega(gv,gw) = \Omega(v,w)$ for all $v,w \in V$. Its Lie algebra
$\sp(V,\Omega)$ consists of linear maps $\xi \colon V \to V$ with
$\Omega(\xi v,w)  + \Omega(v,\xi w)  = 0$ for all $v,w \in V$ or equivalently
$(u,v) \mapsto \Omega(u,\xi v)$ is a symmetric bilinear form.

$Sp(V,\Omega)$ acts as a group of automorphisms of the Heisenberg group
$H(V,\Omega)$ by
\[
g\cdot(v,t) = (g(v),t).
\]

By composing the representation $U_j$ of $H(V,\Omega)$ on $\H(V,\Omega,j)$
with an automorphism $g \in Sp(V,\Omega)$ we get a second representation of
$H(V,\Omega)$ also on $\H(V,\Omega,j)$:
\[
U^g_j(v,t) = U_j(g\cdot(v,t)) = U_j(g(v),t)
\]
which is still irreducible and has the same
central parameter $-\frac1{\hbar}$. By the Stone--von~Neumann Uniqueness
Theorem there is a unitary transformation $U$ of $\H(V,\Omega,j)$ such that
\begin{equation}\label{mpc:def}
U^g_j= U\,U_j\,U^{-1}.
\end{equation}
Since $U_j$ is irreducible, the operator $U$ is determined up to a
scalar multiple by the corresponding elements $g$ of $Sp(V,\Omega)$, and
it is known to be impossible to make a continuous choice $U_g$ which
respects the group multiplication. 
\begin{definition}\label{defMpc}
The group  $\Mpc(V,\Omega,j)$ consists of the pairs
$(U,g)$ of unitary transformations $U$ of $\H(V,\Omega,j)$ and elements
$g$ of $Sp(V,\Omega)$ satisfying 
\[
\qquad \qquad  U_j(g(v),t)= U\,U_j(v,t)\,U^{-1}\qquad 
\forall (v,t)\in H(V,\Omega), \qquad \qquad {(\ref{mpc:def})}
\]with diagonal multiplication law.
\end{definition}
The map
\begin{equation}
\sigma : \Mpc(V,\Omega,j) \rightarrow Sp(V,\Omega) : (U,g)\mapsto  \sigma(U,g) := g
\end{equation}
is a surjective homomorphism with kernel 
consisting of all unitary multiples of the identity. So we have a central
extension
\begin{equation}\label{mpc:ext}
1 \map U(1) \map \Mpc(V,\Omega,j) \map[\sigma] Sp(V,\Omega) \map 1
\end{equation}
which does not split.

%%%%%%%%%%%%%%%%%
%%%%%%%%%%%%%%%%%
\subsection{Parametrising the symplectic group}

Choose and fix $j \in j_+(V,\Omega)$.  A  parametrisation of the real symplectic group,
which will be useful to describe a parametrisation
of the $\Mpc$ group, and which depends  on the triple $(V,\Omega,j)$ is given
as follows.  
Consider $GL (V,j)=\setdef{g \in GL(V)}{gj = jg}$ and
observe that $U(V,\Omega,j) = Sp(V,\Omega) \cap GL(V,j)$ is the unitary
group of the Hilbert space $(V,\Omega,j)$.

Any  $g \in Sp (V, \Omega )$ decomposes \emph{uniquely} as a sum
$C_g+D_g$ of a $j$-linear and $j$-antilinear part,
\begin{equation}
g = C_g +D_g, \quad\quad C_g = \onehalf (g - jgj),~~D_g = \onehalf (g + jgj).
\end{equation}
For any $0\neq v \in V$, we have 
$
 4\Omega (C_gv, jC_gv) \, = \, 2\Omega (v,jv) + \Omega (gv, jgv) +
\Omega (gjv, jgjv) > 0
$
so that $C_g$ is invertible. Define
\begin{equation}
Z_g = C^{-1}_gD_g.
\end{equation}
Clearly $g = C_g(1+Z_g)$ and $Z_g$ is $\C $-antilinear.

 Equating $\C$-linear 
and $\C$-antilinear parts in $1= g^{-1}g $ gives
\[
Z_{g^{-1}} = -C_gZ_gC^{-1}_g \textrm{ and  } 1=
C_{g^{-1}}C_g(1-Z^2_g).
\]
 Thus $1-Z^2_g$ is 
invertible with $(1-Z^2_g)^{-1} = C_{g^{-1}}C_g$. Decomposing a product
yields
\[
C_{g_1g_2} = C_{g_1}(C_{g_2}+Z_{g_1}C_{g_2}Z_{g_2}) 
\, = \, 
C_{g_1}(1-Z_{g_1}Z_{g^{-1}_2})C_{g_2} 
\]
and
\[
Z_{g_1g_2} 
=C^{-1}_{g_2}(1-Z_{g_1}Z_{g^{-1}_2})^{-1}
(Z_{g_1}- Z_{g^{-1}_2})C_{g_2}.
\]
Adding and subtracting $ \langle  gu,v \rangle_j $ and $ \langle  jgju,v
\rangle_j $ yields
\[
C^*_g = C_{g^{-1}},  \quad 
1-Z^2_g = (C_{g^{-1}}C_g)^{-1} \, = \, (C^*_gC_g)^{-1}, 
\]
which is positive definite, and 
\[
\langle Z_gu,v \rangle_j = \langle Z_gv,u \rangle_j.
\]
Hence any $Z_g$ has the three following properties: it is  $\C
$-antilinear, the map $(v,w) \mapsto \langle v, Z_gw \rangle_j $ is
complex bilinear symmetric, and  $1-Z^2_g$ is self adjoint and positive
definite.

Let $\B(V, \Omega, j)$ be the \emph{Siegel domain} consisting of 
$Z \in \mathrm{End}(V)$ such that 
\[
Zj = -jZ, \, \,\langle v, Zw \rangle_j = \langle w, Zv \rangle_j,
\textrm{ and }
1-Z^2 \, \textrm{ is positive definite.} 
\]

\begin{theorem}{\normalfont\cite{refs:RobRaw, refs:CahGuttRaw}}
There is an injective map 
\[
Sp (V, \Omega ) \rightarrow GL (V, j) \times \B(V, \Omega, j): 
g \mapsto (C_g, Z_g), 
\]
whose image is the set $\setdef{(C,Z)}{1-Z^2 = (C^*C)^{-1}}$.
\end{theorem}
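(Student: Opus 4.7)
Well-definedness into $GL(V,j) \times \B(V,\Omega,j)$, injectivity, and the inclusion of the image in $\{(C,Z) : 1-Z^2 = (C^*C)^{-1}\}$ are already in the paragraphs preceding the statement. Indeed, the invertibility of $C_g$ was proved there via positivity of $\Omega(\cdot, j\cdot)$; the three defining properties of $\B(V,\Omega,j)$ for $Z_g$ are verified there; the identity $1-Z_g^2 = (C_g^*C_g)^{-1}$ is derived there; and injectivity is immediate since $(C_g,Z_g)$ reconstructs $g$ via $g = C_g(1+Z_g)$.

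The remaining task is surjectivity onto the claimed subset. Given any $(C,Z) \in GL(V,j) \times \B(V,\Omega,j)$ with $1-Z^2 = (C^*C)^{-1}$, I would set $g := C(1+Z) = C + CZ$. By construction the $\C$-linear and $\C$-antilinear parts of $g$ are $C$ and $CZ$ respectively, so by uniqueness $C_g = C$ and $Z_g = Z$ automatically, as soon as $g$ is known to lie in $Sp(V,\Omega)$.

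To verify $g \in Sp(V,\Omega)$, I would compute $\langle gv, gw\rangle_j - \langle gw, gv\rangle_j$ and show it equals $\langle v, w\rangle_j - \langle w, v\rangle_j$, which is equivalent to $\Omega(gv, gw) = \Omega(v,w)$ since $\Omega = -\Im \langle\cdot,\cdot\rangle_j$. Expanding $g = C + CZ$ produces four terms. The two cross terms $\langle Cv, CZw\rangle_j$ and $\langle CZv, Cw\rangle_j$ reduce, via $C^*C = (1-Z^2)^{-1}$, to pairings involving only $Z$, $(1-Z^2)^{-1}$ and $\langle\cdot,\cdot\rangle_j$; using the bilinear symmetry $\langle Z\alpha,\beta\rangle_j = \langle Z\beta,\alpha\rangle_j$, the self-adjointness of $(1-Z^2)^{-1}$, and the commutation $[Z,(1-Z^2)^{-1}] = 0$, they turn out to be symmetric under $v \leftrightarrow w$ and so cancel in the difference. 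The two diagonal terms combine through the algebraic identity $Z^2(1-Z^2)^{-1} = (1-Z^2)^{-1} - 1$, which lets $\langle CZv, CZw\rangle_j$ be rewritten as $\overline{\langle Cv, Cw\rangle_j} - \overline{\langle v,w\rangle_j}$, after which the diagonal contribution to the difference collapses to exactly $\langle v,w\rangle_j - \langle w,v\rangle_j$.

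The main obstacle is this last verification. Nothing deep enters, but the bookkeeping is delicate: one must consistently distinguish $\C$-linear from $\C$-antilinear factors, remember that complex conjugation appears through the second slot of $\langle\cdot,\cdot\rangle_j$, and apply the constraint $(C^*C)(1-Z^2) = 1$ in tandem with the two distinct symmetries enjoyed by $\C$-linear self-adjoint operators (Hermitian) and by $\C$-antilinear ones satisfying the Siegel condition (complex-bilinear).
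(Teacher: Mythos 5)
Your proposal is correct and follows essentially the same route as the paper: set $g=C(1+Z)$ and verify $g\in Sp(V,\Omega)$ directly from the Hermitian form, using $C^*C=(1-Z^2)^{-1}$ together with the bilinear symmetry and self-adjointness properties of $Z$ and $(1-Z^2)^{-1}$. The paper packages the same computation a little more compactly by keeping $(1+Z)$ intact and moving it across the pairing via $-\Im\langle(1+Z)u,X\rangle_j=\Im\langle(1-Z)X,u\rangle_j$, so that $(1-Z)C^*C(1+Z)=1$ closes the argument in a single line, but your four-term expansion arrives at the same conclusion.
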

Indeed, for any such $(C,Z)$, defining $g=C(1+Z)$, we have
\begin{eqnarray*}
\Omega(gu,gv)&=&-\Im \langle C(1+Z)u,C(1+Z)v\rangle_j
=-\Im \langle (1+Z)u,C^*C(1+Z)v\rangle_j\\
&=&\Im \langle (1-Z)C^*C(1+Z)v,u\rangle_j=\Im \langle v,u\rangle_j=
\Omega(u,v).
\end{eqnarray*}
Thus $C$ and $Z$ are parameters (but not independent) for an element of
$Sp(V,\Omega)$. In order to parametrise $\Mpc(V,\Omega,j)$ in a similar
fashion we observe that if $Z_1,Z_2 \in \B(V,\Omega, j)$, then $1-Z_1Z_2
\in GL(V,j)$ and its real part, $\half ((1-Z_1Z_2) + (1-Z_1Z_2)^*) =
\half ((1-Z_1Z_2) + (1-Z_2Z_1))$, is positive definite.

Any $g \in GL (V, j)$ can be written uniquely in the form $X+iY$ with
$X$ and $Y$ self-adjoint, and $g \in GL (V, j)_+= \setdef{g \in
GL(V,j)}{g+g^* \mbox{ is positive definite}}$ when $X$ is positive
definite. Positive definite self adjoint operators $X$ are of the form
$X=e^Z$ with $Z$ self-adjoint and $Z \mapsto e^Z$ is a diffeomorphism of
all self-adjoint operators with those which are positive definite. Given
self-adjoint operators $X$ and $Y$ with $X$ positive definite then
$X+iY$ has no kernel, so is in $GL (V, j)$. Thus $GL (V, j)_+$ is an
open set in $GL (V, j)$ diffeomorphic to the product of two copies of
the real vector space of Hermitean linear maps of $(V,\Omega,j)$. In
particular $GL (V, j)_+$ is contractible so simply-connected. Thus 
there is a unique smooth function $a \colon GL (V, j)_+ \to \C$ such
that
\[
{\Det}_j g = e^{a(g)}, \qquad g \in GL (V, j)_+
\]
and normalised by $a(I)=0$. Here ${\Det}_j g$ is the determinant of $g$
considered as a complex transformation of $V$ viewed as a complex space
using $j$. Further, since $\Det_j$ is a holomorphic function on $GL(V,
j)$, $a$ will be holomorphic on $GL (V, j)_+$. In particular,  for any
$Z_1,Z_2 \in \B(V,\Omega, j)$, we have
\begin{equation}\label{eq:defa}
{\Det}_j(1-Z_1Z_2) = e^{a(1-Z_1Z_2)}
\end{equation}
 and  $a(1-Z_1Z_2)$ is a holomorphic function of $Z_1$ and
anti-holomorphic function of $Z_2$.

%%%%%%%%%%%%%%%%%%%%%%%%%%%%%%%%%%%%%%%%%%%%%%%%%%%%%%%%%%%%%%%%%%%%%%%%%%%%%%%
\subsection{Parametrising the $\Mpc$ group}

The group $\Mpc(V,\Omega,j)$ is defined as pairs $(U,g)$ with $U$ a
unitary operator on $\H(V,\Omega,j)$ and $g \in Sp(V,\Omega)$ satisfying
(\ref{mpc:def}). One can determine the form of the operator $U$ in terms
of the parameters $C_g, Z_g$ of $g$ introduced in the previous
paragraph. Fixing  $j \in j_+(V,\Omega)$, any bounded operator $A$ on
$\H(V,\Omega,j)$ is determined by its Berezin kernel  
\[A(z,v)=\left(
Ae_v, e_z\right)_j=(Ae_v)(z)
\]
where the $e_v$ are the {\emph{coherent states}}
of  $\H(V,\Omega,j)$ defined by $(e_v)(z) = e^{\frac1{2\hbar}\langle
z,v\rangle_j}$ and one gets

\begin{theorem}{\normalfont\cite{refs:RobRaw, refs:CahGuttRaw}\label{thm:Uparameters}}
If $(U,g) \in \Mpc(V,\Omega,j)$ then the Berezin kernel $U(z,v)$ of $U$
has the form
\begin{equation}\label{eqn:Ukernel}
U(z,v) = \lambda \exp \frac1{4\hbar}\{2\langle C_g^{-1}z,v\rangle_j
-\langle z, Z_{g^{-1}}z \rangle_j - \langle Z_g v,v\rangle_j \}
\end{equation}
for some $\lambda \in \C$ with $|\lambda^2 {\Det}_j C_g| =1$. Moreover 
$\lambda = 
(Ue_0)(0) = (Ue_0,e_0)_j$. 
\end{theorem}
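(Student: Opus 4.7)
The plan is to compute the Berezin kernel $U(z,v) = (Ue_v)(z)$ by exploiting how coherent states transform under the Heisenberg action and then invoking the intertwining relation (\ref{mpc:def}). A short calculation from the explicit formula for $U_j$ gives
\[
U_j(v,0)\, e_0 \;=\; e^{-|v|_j^2/4\hbar}\, e_v,
\]
so every coherent state is an $H(V,\Omega)$-translate of $e_0$ up to a scalar. Setting $f_0 := Ue_0$ and rewriting (\ref{mpc:def}) as $U\,U_j(v,0) = U_j(gv,0)\,U$, I obtain $U e_v = e^{|v|_j^2/4\hbar}\, U_j(gv,0)\, f_0$, whose evaluation at $z$ produces
\[
U(z,v) \;=\; e^{(|v|_j^2 - |gv|_j^2)/4\hbar}\; e^{\langle z,\, gv\rangle_j/2\hbar}\; f_0(z-gv).
\]
Since $f_0 \in \H(V,\Omega,j)$, it is entire holomorphic, and $U(z,v)$ must be holomorphic in $z$ and anti-holomorphic in $v$.

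The heart of the argument is to pin down $f_0$. Decomposing $g = C_g + D_g$ into its $j$-linear and $j$-antilinear parts, the term $\langle z, D_g v\rangle_j/2\hbar$ is holomorphic in $v$, which is forbidden; it must be cancelled by the $v$-dependence of $f_0(z-gv)$. I would make the Gaussian ansatz
\[
f_0(z) \;=\; \lambda\, \exp\!\left(-\frac{1}{4\hbar}\,\langle z, Z_{g^{-1}} z\rangle_j\right),
\]
expand $\langle z-gv,\, Z_{g^{-1}}(z-gv)\rangle_j$, and verify, using the identities from the previous subsection ($Z_{g^{-1}} = -C_g Z_g C_g^{-1}$, $C_g^* = C_{g^{-1}}$, and $1-Z_g^2 = (C_g^* C_g)^{-1}$), that all holomorphic-in-$v$ cross-terms cancel, the linear-in-$v$ contribution collapses to $2\langle C_g^{-1} z,v\rangle_j$, and the quadratic-in-$\bar v$ contribution equals $-\langle Z_g v,v\rangle_j$. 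Combined with the remaining $-\langle z, Z_{g^{-1}} z\rangle_j$ term, this produces exactly the exponent of (\ref{eqn:Ukernel}).

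Finally, $\lambda = f_0(0) = (Ue_0)(0)$, which equals $(Ue_0,e_0)_j$ by the reproducing property of coherent states. The modulus constraint $|\lambda^2\, \Det_j C_g| = 1$ then follows from the unitarity equation $\|Ue_0\|_j^2 = \|e_0\|_j^2 = 1$ by computing the Gaussian integral
\[
h^{-n}\int_V |f_0(z)|^2\, e^{-|z|_j^2/2\hbar}\, dz \;=\; 1
\]
and using $1-Z_g^2 = (C_g^*C_g)^{-1}$ to simplify the resulting Hermitean determinant. The main obstacle is the algebraic bookkeeping in the second paragraph: verifying that the specific Gaussian form of $f_0$ simultaneously annihilates all forbidden $v$-holomorphic cross-terms and assembles the remaining pieces into the precise stated exponent, which requires repeated use of the relations between $C_g$, $D_g$, $Z_g$ and $Z_{g^{-1}}$.
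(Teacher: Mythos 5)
Your plan is sound and essentially the standard derivation. The algebraic bookkeeping you flag as the main obstacle does close: from $Z_{g^{-1}}=-C_gZ_gC_g^{-1}$ one gets $(1+Z_{g^{-1}})g = C_g(1-Z_g)(1+Z_g) = C_g(1-Z_g^2) = (C_g^*)^{-1}$, which, together with the symmetry $\langle w,Z_{g^{-1}}w'\rangle_j = \langle w',Z_{g^{-1}}w\rangle_j$, collapses the mixed $z$--$v$ terms in the exponent of $e^{(|v|_j^2-|gv|_j^2)/4\hbar}e^{\langle z,gv\rangle_j/2\hbar}f_0(z-gv)$ to $\frac1{2\hbar}\langle C_g^{-1}z,v\rangle_j$; the same identity, applied to $|gv|_j^2+\langle gv,Z_{g^{-1}}gv\rangle_j = \langle gv,(C_g^*)^{-1}v\rangle_j = |v|_j^2 + \langle Z_gv,v\rangle_j$, reduces the pure-$v$ terms to $-\frac1{4\hbar}\langle Z_gv,v\rangle_j$, giving exactly (\ref{eqn:Ukernel}).

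The one point you should make explicit rather than take as an ansatz is why $f_0$ must be a Gaussian. This is forced by the antiholomorphy you invoke: the rest of the exponent of $U(z,v)$ is a polynomial of degree two in $(z,v,\bar z,\bar v)$, so any degree-$k$ ($k\ge 3$) Taylor coefficient of $\log f_0$ near $0$ (well defined since $f_0(0)=\lambda\ne 0$) would, via $f_0(z-gv)$ and $\partial_v(z-gv)=-C_g$, contribute an uncancellable degree-$k$ $v$-holomorphic term. Hence $\log f_0$ is quadratic, and the particular quadratic form $-\frac1{4\hbar}\langle z,Z_{g^{-1}}z\rangle_j$ is then pinned down by the cancellation of the remaining $v$-holomorphic pieces. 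With that in place, $\lambda = f_0(0) = (Ue_0,e_0)_j$ by the reproducing property, and $|\lambda^2\Det_jC_g| = 1$ follows from $\|Ue_0\|_j = 1$ via the Gaussian integral and $1-Z_{g^{-1}}^2 = (C_gC_g^*)^{-1}$, as you say.
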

We call $g,\lambda$ given by Theorem \ref{thm:Uparameters} the
\textit{parameters} of the element $(U,g) \in \Mpc(V,\Omega,j)$. If $g$
is symplectic and $\lambda \in \C$ satisfies $|\lambda^2\Det_jC_g|=1$ we
denote by $U_{g,\lambda}$ the unitary transformation whose kernel is given by
(\ref{eqn:Ukernel}). Writing the multiplication in $\Mpc(V,\Omega,j)$ 
in terms of the parameters, we get

\begin{theorem}{\normalfont\cite{refs:RobRaw, refs:CahGuttRaw}\label{thm:mpcproduct}}
The product in $\Mpc(V,\Omega,j)$ of $(U_i,g_i)$ with parameters
$g_i,\lambda_i$, $i = 1,2$ has parameters $g_1g_2, \lambda_{12}$ with
\begin{equation}\label{lambdaprod}
\lambda_{12} 
=  \lambda_1\lambda_2 e^{-\frac12 a\left(1-Z_{g_1}Z_{g_2^{-1}}\right)}
\end{equation}
where $a$ is defined as in (\ref{eq:defa}).
\end{theorem}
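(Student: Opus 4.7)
The plan is to exploit the fact that $\Mpc(V,\Omega,j)$ is a group: since $(U_1,g_1)(U_2,g_2)=(U_1U_2,g_1g_2)$ by definition lies in $\Mpc(V,\Omega,j)$, its Berezin kernel must by Theorem \ref{thm:Uparameters} take the form (\ref{eqn:Ukernel}) with projection $g_1g_2$ and some parameter $\lambda_{12}$. The same theorem identifies $\lambda_{12}=(U_1U_2e_0,e_0)_j=(U_1U_2)(0,0)$, so the whole task reduces to computing this single value.

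The reproducing structure of $\H(V,\Omega,j)$ yields a composition rule for Berezin kernels,
\[
(U_1U_2)(z,v)=h^{-n}\int_V U_1(z,w)\,U_2(w,v)\,e^{-|w|_j^2/2\hbar}\,dw,
\]
which follows from applying $f(z)=(f,e_z)_j$ to $f=U_1U_2e_v$ together with the anti-holomorphicity in $w$ of the kernel $A(z,w)=(Ae_w)(z)$. Setting $z=v=0$ and substituting (\ref{eqn:Ukernel}) collapses the $C_{g_i}^{-1}$-coupling pieces and leaves
\[
\lambda_{12}=\lambda_1\lambda_2\,h^{-n}\int_V\exp\!\left\{-\frac{1}{4\hbar}\bigl[\langle Z_{g_1}w,w\rangle_j+\langle w,Z_{g_2^{-1}}w\rangle_j+2|w|_j^2\bigr]\right\}dw.
\]

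In a unitary basis of $(V,j)$, the symmetry axiom for $\B(V,\Omega,j)$ forces the matrices of $Z_{g_1}$ and $Z_{g_2^{-1}}$ to be symmetric, and the Gaussian converges precisely because $1-Z_{g_1}Z_{g_2^{-1}}\in GL(V,j)_+$, as noted in the excerpt just before Theorem \ref{thm:Uparameters}. A direct diagonalisation of the quadratic form then evaluates the integral to $\Det_j(1-Z_{g_1}Z_{g_2^{-1}})^{-1/2}$ for some choice of branch of the square root.

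The main obstacle is pinning down this square root as exactly $e^{-\half a(1-Z_{g_1}Z_{g_2^{-1}})}$, where $a$ is the particular single-valued logarithm of $\Det_j$ on $GL(V,j)_+$ normalised by $a(I)=0$. I would argue by continuity: both $\lambda_{12}/(\lambda_1\lambda_2)$ and $e^{-\half a(1-Z_{g_1}Z_{g_2^{-1}})}$ are continuous on the connected product $\B(V,\Omega,j)\times\B(V,\Omega,j)$, are jointly holomorphic in $Z_{g_1}$ and antiholomorphic in $Z_{g_2^{-1}}$ (using (\ref{eq:defa}) for the latter), and their squares agree on the whole domain by the previous paragraph. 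At the basepoint $(Z_{g_1},Z_{g_2^{-1}})=(0,0)$ the Gaussian reduces to the Fock normalisation $h^{-n}\int_V e^{-|w|_j^2/2\hbar}dw=1$, matching $e^{-\half a(I)}=1$. Hence the two continuous square roots coincide throughout $\B\times\B$, which establishes the formula.
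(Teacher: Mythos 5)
The paper states Theorem~\ref{thm:mpcproduct} as a citation to \cite{refs:RobRaw,refs:CahGuttRaw} and gives no proof of its own, so there is no internal argument to compare against. Your derivation is correct and complete, and it is the standard coherent-state computation that I believe matches the cited sources: the Berezin kernel composition rule follows from inserting the reproducing identity $f=h^{-n}\int_V f(w)\,e_w\,e^{-|w|_j^2/2\hbar}\,dw$ between $U_1$ and $U_2$; setting $z=v=0$ isolates $\lambda_{12}$ as a Gaussian integral over $V$, which converges because every $Z\in\B(V,\Omega,j)$ has $G_j$-operator norm less than $1$, so the real part of the exponent is governed by the positive operator $2+Z_{g_1}+Z_{g_2^{-1}}$ on $(V,G_j)$; the integral squares to $\Det_j(1-Z_{g_1}Z_{g_2^{-1}})^{-1}$; and the square root is pinned down because the integral and $e^{-\half a(1-Z_{g_1}Z_{g_2^{-1}})}$ are both continuous on the connected convex domain $\B(V,\Omega,j)\times\B(V,\Omega,j)$, have equal squares, and both equal $1$ at the basepoint $(Z_{g_1},Z_{g_2^{-1}})=(0,0)$.
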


\begin{corollary}\label{thm:eta}
The group $\Mpc(V,\Omega,j)$ is a Lie group. It admits a smooth
character $\eta$ given by
\begin{equation}\label{eq:defeta}
\eta(U,g) = \lambda^2 {\Det}_j C_g \in U(1)
\end{equation}
if $g, \lambda$ are the parameters of $(U,g)$. The restriction of $\eta$
to the central $U(1)$ is the squaring map.
The inclusion $U(1) \hookrightarrow \Mpc(V,\Omega,j)$
sends $\lambda \in U(1)$ to $(\lambda I_{\H}, I_V)$ which has parameters
$I_V,\lambda$. The exact sequence (\ref{mpc:ext}) is a central extension of
Lie groups.
 \end{corollary}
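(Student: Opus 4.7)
The plan is to exploit the parametrisation $(U,g)\mapsto(g,\lambda)$ provided by Theorem \ref{thm:Uparameters}, which identifies $\Mpc(V,\Omega,j)$ with the closed subset
$$\setdef{(g,\lambda)\in Sp(V,\Omega)\times\C^*}{|\lambda^2{\Det}_jC_g|=1}$$
of the product Lie group $Sp(V,\Omega)\times\C^*$. Since $g\mapsto C_g$ is a smooth (in fact rational) map on $Sp(V,\Omega)$, this cuts out an embedded submanifold; the group operations are smooth because (\ref{lambdaprod}) expresses the product in terms of the smooth maps $g\mapsto C_g,Z_g$ on $Sp(V,\Omega)$ together with the holomorphic function $a$ on $GL(V,j)_+$. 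This gives the Lie group structure on $\Mpc(V,\Omega,j)$.

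For $\eta$, the main point is multiplicativity. I would combine (\ref{lambdaprod}) with the identity $C_{g_1g_2}=C_{g_1}(1-Z_{g_1}Z_{g_2^{-1}})C_{g_2}$ established in the parametrisation subsection: taking ${\Det}_j$ and using (\ref{eq:defa}) produces
$${\Det}_jC_{g_1g_2}={\Det}_jC_{g_1}\cdot e^{a(1-Z_{g_1}Z_{g_2^{-1}})}\cdot{\Det}_jC_{g_2},$$
while squaring (\ref{lambdaprod}) gives $\lambda_{12}^2=\lambda_1^2\lambda_2^2\,e^{-a(1-Z_{g_1}Z_{g_2^{-1}})}$. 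Multiplying these two identities the exponential factor cancels, so that $\eta((U_1,g_1)(U_2,g_2))=(\lambda_1^2{\Det}_jC_{g_1})(\lambda_2^2{\Det}_jC_{g_2})=\eta(U_1,g_1)\eta(U_2,g_2)$. That $\eta$ takes values in $U(1)$ is exactly the defining condition $|\lambda^2{\Det}_jC_g|=1$, and smoothness is inherited from that of the parametrisation.

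For the central circle, note that $(\lambda I_{\H},I_V)$ trivially satisfies (\ref{mpc:def}) when $\lambda\in U(1)$. Since $C_{I_V}=I_V$ and $Z_{I_V}=Z_{I_V^{-1}}=0$, formula (\ref{eqn:Ukernel}) collapses to the constant kernel $\lambda$, matching the parameter value $\lambda=(Ue_0,e_0)_j$ once one checks $\|e_0\|_j=1$; this is the standard Gaussian integral $h^{-n}\int_V e^{-|z|_j^2/2\hbar}\,dz=1$ in real dimension $2n$. Consequently $\eta(\lambda I_{\H},I_V)=\lambda^2{\Det}_jI_V=\lambda^2$, so $\eta$ restricts to the squaring map on the central $U(1)$. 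Since this inclusion is smooth and coincides with $\Ker\sigma$, the smooth surjection $\sigma$ fits into (\ref{mpc:ext}) as a central extension of Lie groups. The only non-routine step in the whole argument is the algebraic cancellation that makes $\eta$ multiplicative; everything else is bookkeeping on the parameters.
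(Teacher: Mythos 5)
Your argument is correct and follows the approach the paper itself sets up: the Lie group structure, the multiplicativity of $\eta$, and the identification of $\Ker\sigma$ all fall out of the parametrisation of Theorem \ref{thm:Uparameters} together with the product formula of Theorem \ref{thm:mpcproduct} and the identity $C_{g_1g_2}=C_{g_1}(1-Z_{g_1}Z_{g_2^{-1}})C_{g_2}$, which is exactly the route the paper leaves implicit in calling this a corollary. One small slip: with $g=I_V$ (so $C_{I_V}=I_V$, $Z_{I_V}=0$) formula (\ref{eqn:Ukernel}) does not give a constant kernel but $U(z,v)=\lambda\exp\frac{1}{2\hbar}\langle z,v\rangle_j$, i.e.\ $\lambda$ times the reproducing (coherent-state) kernel; this is of course the kernel of $\lambda I_{\H}$, and evaluating at $z=v=0$ still yields $\lambda=(Ue_0)(0)$, so your conclusion about the parameters of $(\lambda I_{\H},I_V)$ stands — only the word ``constant'' should be deleted.
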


\begin{definition}\label{def:metaplectic}
The  \textit{metaplectic group} is the kernel of $\eta$; it is given by 
\[
Mp(V,\Omega,j) = \left\{(U,g)\in \Mpc(V,\Omega,j) \,\middle|\,
\lambda^2{\Det}_j C_g = 1\right\}
\]
with the multiplication rule given by Theorem \ref{thm:mpcproduct}. 
\end{definition}

For $(U,g) \in Mp(V,\Omega,j)$ it is clear from the definition that 
$g$ determines $\lambda$ up to sign and so $Mp(V,\Omega,j)$ is a double 
covering of $Sp(V,\Omega)$.

Let $\mpc(V,\Omega,j)$ be the Lie algebra of $\Mpc(V,\Omega,j)$.
Differentiating (\ref{mpc:ext}) gives an exact sequence of Lie algebras
\begin{equation}\label{mpc:laext}
0 \map \u(1) \map \mpc(V,\Omega,j) \map[\sigma] \sp(V,\Omega) \map 0.
\end{equation}
We denote by $\eta_* \colon \mpc(V,\Omega,j) \map \u(1)$  the differential
of the group homomorphism $\eta$, and observe that $\half \eta_*$ is a map to
$\u(1)$ which is the identity on the central $\u(1)$ of $\mpc(V,\Omega,j)$.
Hence (\ref{mpc:laext}) splits as a sequence of Lie algebras.

 We let
$\MUc(V,\Omega,j)$ be the inverse image of $U(V,\Omega,j)$ under $\sigma$
so that (\ref{mpc:ext}) induces a corresponding short exact sequence
\begin{equation}\label{mpc:mucext}
1 \map U(1) \map \MUc(V,\Omega,j) \map[\sigma] U(V,\Omega,j) \map 1.
\end{equation}
$\MUc(V,\Omega,j)$ is  a maximal compact subgroup of
$\Mpc(V,\Omega)$.

\begin{proposition} \label{cor:muc}
If $(U,k) \in MU^c(V,\Omega,j)$ has parameters $k$ and $\lambda$ then
$\lambda(U,k) = \lambda$ is a character of $\MUc(V,\Omega,j)$. If $f \in
\H(V,\Omega,j)$ then $(Uf)(z) = \lambda f(k^{-1}z)$,
so unlike the exact sequence (\ref{mpc:ext}), (\ref{mpc:mucext}) does split
canonically by means of the homomorphism
\[
\lambda \colon \MUc(V,\Omega,j) \map U(1).
\]
This gives an isomorphism
\begin{equation}\label{eq:isoMUc}
\MUc(V,\Omega,j) \map[\sigma\times\lambda] U(V,\Omega,j) \times U(1).
\end{equation}
\end{proposition}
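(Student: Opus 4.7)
The plan is to exploit that any $k \in U(V,\Omega,j)$ commutes with $j$, which forces $C_k = \frac12(k-jkj) = k$, $D_k = \frac12(k+jkj) = 0$, and hence $Z_k = C_k^{-1}D_k = 0$; the same holds for $k^{-1}$, so $Z_{k^{-1}} = 0$. Substituting these vanishings into the Berezin kernel of Theorem~\ref{thm:Uparameters}, formula (\ref{eqn:Ukernel}) collapses to
\[
U(z,v) = \lambda \exp\frac{1}{2\hbar}\langle k^{-1}z, v\rangle_j = \lambda\, e_v(k^{-1}z).
\]
Since $U(z,v) = (Ue_v)(z)$, this identifies $Ue_v = \lambda\, e_v\circ k^{-1}$ on every coherent state, and totality of the coherent states in $\H(V,\Omega,j)$ together with boundedness of the map $f \mapsto \lambda\, f\circ k^{-1}$ upgrades the identity to $(Uf)(z) = \lambda f(k^{-1}z)$ for all $f \in \H(V,\Omega,j)$.

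Next I would show $\lambda$ takes values in $U(1)$ and is a character. Corollary~\ref{thm:eta} gives $|\lambda^2 \Det_j C_k| = 1$, and since $k$ is $j$-unitary $|\Det_j k| = 1$, so $|\lambda| = 1$. For multiplicativity I apply Theorem~\ref{thm:mpcproduct}: for $(U_i, k_i) \in \MUc(V,\Omega,j)$ with parameters $\lambda_i$, the vanishings $Z_{k_1} = Z_{k_2^{-1}} = 0$ yield $1 - Z_{k_1}Z_{k_2^{-1}} = I$, and the normalisation $a(I) = 0$ from (\ref{eq:defa}) kills the correction exponential in (\ref{lambdaprod}), leaving $\lambda_{12} = \lambda_1 \lambda_2$. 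Smoothness of $\lambda$ follows from smoothness of the parameter assignment in Theorem~\ref{thm:Uparameters}.

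For the splitting I consider $\sigma \times \lambda \colon \MUc(V,\Omega,j) \to U(V,\Omega,j) \times U(1)$, which is a Lie group homomorphism by the previous paragraph. It is injective because an element of $\MUc$ is determined by its parameters $(k, \lambda)$ via the explicit formula $(Uf)(z) = \lambda f(k^{-1}z)$. For surjectivity, given $(k,\mu) \in U(V,\Omega,j) \times U(1)$ I would define $U_{k,\mu}f := \mu \cdot f\circ k^{-1}$ and verify both that $U_{k,\mu}$ is a unitary operator on $\H(V,\Omega,j)$ (it preserves holomorphy because $k$ is $j$-linear, and it preserves the weighted norm because $|kw|_j = |w|_j$ and the Jacobian of $k$ on $V$ is $1$) and that it satisfies the intertwining relation (\ref{mpc:def}) with $g = k$. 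The intertwining reduces, after substituting the explicit form of $U_j(v,t)$ from the Fock picture, to the identity $\langle k^{-1}z, v\rangle_j = \langle z, kv\rangle_j$ and $|v|_j = |kv|_j$, both immediate from $j$-unitarity. This last intertwining check is the only non-formal computation; everything else is a direct consequence of the collapse $Z_k = Z_{k^{-1}} = 0$ in the general formulas of the previous subsection.
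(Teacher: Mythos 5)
Your proof is correct and follows the route that the paper's parametrisation machinery naturally suggests (the paper itself states Proposition~\ref{cor:muc} without a written proof, deferring to the surrounding development and the references). The key observations — that $k \in U(V,\Omega,j)$ forces $C_k=k$, $D_k=0$, $Z_k=Z_{k^{-1}}=0$, whence the Berezin kernel collapses to $\lambda\, e_v(k^{-1}z)$, and that the cocycle correction in Theorem~\ref{thm:mpcproduct} evaporates because $a(I)=0$ — are exactly the right ones, and your verifications (density of coherent states to pass from $Ue_v=\lambda\,e_v\circ k^{-1}$ to the full operator identity; $|\Det_j k|=1$ to get $|\lambda|=1$; unitarity of $f\mapsto\mu f\circ k^{-1}$ via the volume-preservation and norm-preservation of $k$; the intertwining check reducing to $\langle k^{-1}z,v\rangle_j=\langle z,kv\rangle_j$ and $|kv|_j=|v|_j$) are all sound and complete the argument, including both injectivity and surjectivity of $\sigma\times\lambda$.
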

In addition we have the determinant character ${\Det}_j \colon U(V,\Omega,j)
\map U(1)$ which can be composed with $\sigma$ to give a character
${\Det}_j\circ\sigma$ of $\MUc(V,\Omega,j)$. The three characters $\eta$,
$\lambda$ and ${\Det}_j\circ\sigma$ are related by
\begin{equation}\label{mpc:chars}
\eta = \lambda^2 {\Det}_j\circ\sigma.
\end{equation}

\begin{remark}\label{rem:Uliftable}
 The map $F_J : U(V,\Omega,j)\rightarrow \Mpc(V,\Omega,j) : g\mapsto (g,1)$ 
is an isomorphism by Proposition \ref{cor:muc}. We say $U(V,\Omega,j)$ is \textit{liftable} to
$\Mpc(V,\Omega,j)$. In section \ref{subsection:lift}, we look at other subgroups of $Sp(V,\Omega)$
which are liftable.
\end{remark}

\subsection{Embedding of $\mpc(V,\Omega,j)$ into the Clifford Algebra}

For any $v\in V$ we denote by $\ul{v}$ the linear form on $V$ defined by 
\[
\ul{v}(x)=\Omega(v,x)
\]
so that  $\sp(V,\Omega)$ is spanned by the elements 
\[
\ul{v}\otimes w+\ul{w}\otimes v \qquad \forall v,w \in V.
\]  
We endow the symplectic Clifford algebra with the Lie algebra structure
$[~,~]_{Cl}$ defined by skewsymmetrising the associative product
$\cdot$,  $[a,b]_{Cl}:=a\cdot b-b\cdot a$.
\begin{lemma}\label{lemma:nu}
The map
\begin{equation}\label{eq:defnu}
\nu : \sp(V,\Omega)\rightarrow Cl(V,\Omega) : 
\ul{v}\otimes w+\ul{w}\otimes v \mapsto -\frac{i\hbar}{2}(v\cdot w+w\cdot v)
\end{equation}
is a homomorphism of Lie algebras. Furthermore
\begin{equation}\label{eq:nu}
[\nu (A),v]_{Cl}=Av \quad \textrm{ for any } A\in \sp(V,\Omega) \textrm{ and } v\in V.
\end{equation}
\end{lemma}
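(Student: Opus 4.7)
The plan is to establish the pointwise identity \eqref{eq:nu} first by a direct computation in the symplectic Clifford algebra, and then to deduce the Lie algebra homomorphism property as an almost formal consequence, using the fact that $\ad(\nu(A))=[\nu(A),\cdot]_{Cl}$ acts as a derivation of the associative product in $Cl(V,\Omega)$. Before doing either I would check that $\nu$ is well defined: the assignment $v\odot w \mapsto \ul{v}\otimes w + \ul{w}\otimes v$ identifies the symmetric square $S^2V$ with $\sp(V,\Omega)$ via the isomorphism $V\simeq V^*$ induced by $\Omega$, and the expression $-\frac{i\hbar}{2}(v\cdot w + w\cdot v)$ on the right of \eqref{eq:defnu} is manifestly symmetric and bilinear in $v,w$, so it descends to a well defined linear map on $\sp(V,\Omega)$.

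For \eqref{eq:nu}, with $A=\ul{v}\otimes w + \ul{w}\otimes v$ so that $Ax=\Omega(v,x)w+\Omega(w,x)v$, I would commute $x$ past $w$ and then past $v$ in the product $v\cdot w\cdot x$, using the defining relation $p\cdot q - q\cdot p = \tfrac{i}{\hbar}\Omega(p,q)\,1$, and then symmetrise in $(v,w)$. The prefactor $-\tfrac{i\hbar}{2}$ is precisely what cancels the $\tfrac{i}{\hbar}$ coming from the commutation relation together with the factor of $2$ coming from the symmetrisation, leaving $\Omega(v,x)w+\Omega(w,x)v=Ax$.

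Once \eqref{eq:nu} is in hand, the homomorphism property $\nu([A,B])=[\nu(A),\nu(B)]_{Cl}$ falls out without any further commutator calculation. For $B=\ul{x}\otimes y + \ul{y}\otimes x$, applying the derivation $\ad(\nu(A))$ to $\nu(B)=-\tfrac{i\hbar}{2}(x\cdot y + y\cdot x)$ and invoking \eqref{eq:nu} on each factor yields
\[
[\nu(A),\nu(B)]_{Cl}=-\tfrac{i\hbar}{2}\bigl(Ax\cdot y + x\cdot Ay + Ay\cdot x + y\cdot Ax\bigr)=\nu\bigl(\ul{Ax}\otimes y + \ul{y}\otimes Ax + \ul{x}\otimes Ay + \ul{Ay}\otimes x\bigr).
\]
It then remains to recognise the argument of $\nu$ on the right as $[A,B]\in\sp(V,\Omega)$, which is a direct evaluation of $(AB-BA)(z)$ using $A\in\sp(V,\Omega)$ in the form $\Omega(x,Az)=-\Omega(Ax,z)$. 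The only delicate point in the whole argument is the bookkeeping of the factors $i$, $\hbar$ and $2$ in \eqref{eq:nu}; this is precisely what pins down the normalisation $-\tfrac{i\hbar}{2}$, and once it is right the remainder of the proof is purely algebraic and needs no further explicit manipulation of quadratic elements in the Clifford algebra.
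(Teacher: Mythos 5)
Your proposal is correct, but it reverses the logical order that the paper uses, in a way that is a genuinely different argument rather than a cosmetic rearrangement. The paper verifies both claims of the Lemma by independent direct computation on the diagonal elements $\ul{v}\otimes v \mapsto -\tfrac{i\hbar}{2}\, v\cdot v$: it first computes $[\ul{v}\otimes v,\ul{w}\otimes w]=\Omega(v,w)(\ul{v}\otimes w+\ul{w}\otimes v)$ against $[v\cdot v,w\cdot w]_{Cl}=2\tfrac{i}{\hbar}\Omega(v,w)(v\cdot w+w\cdot v)$ to get the homomorphism property, and then separately computes $[v\cdot v,x]_{Cl}=2\tfrac{i}{\hbar}\Omega(v,x)v$ for \eqref{eq:nu}. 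You instead prove \eqref{eq:nu} first (your calculation of $[\nu(A),x]_{Cl}$ for $A=\ul v\otimes w+\ul w\otimes v$ is correct: using $[v,x]_{Cl}=\tfrac{i}{\hbar}\Omega(v,x)$ and the derivation property, each of $[v\cdot w,x]_{Cl}$ and $[w\cdot v,x]_{Cl}$ equals $\tfrac{i}{\hbar}(\Omega(v,x)w+\Omega(w,x)v)$, and the prefactor $-\tfrac{i\hbar}{2}$ kills the $\tfrac{2i}{\hbar}$), and then deduce the homomorphism property from it by applying the derivation $\ad(\nu(A))$ to $\nu(B)$ and matching the result with $\nu([A,B])$. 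I checked the final identification: with $Bz=\Omega(x,z)y+\Omega(y,z)x$ and $\Omega(x,Az)=-\Omega(Ax,z)$ one indeed gets $[A,B]=\ul{Ax}\otimes y+\ul{y}\otimes Ax+\ul{x}\otimes Ay+\ul{Ay}\otimes x$, so the deduction closes. What your approach buys is that the homomorphism property becomes a formal consequence of \eqref{eq:nu} plus the derivation property of $\ad$, so only one Clifford-algebra computation is needed and the normalisation $-\tfrac{i\hbar}{2}$ is pinned down in a single place; the paper's proof is slightly shorter to write because it works only with squares $v\cdot v$ and the polarization is left implicit. Both are complete.
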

\begin{proof}
It is enough to observe that $[\ul{v}\otimes v,\ul{w}\otimes w]=
\Omega(v,w) (\ul{v}\otimes w +\ul{w}\otimes v)$ and
\begin{eqnarray*}
[v\cdot v,w\cdot w]_{Cl}&=& v\cdot [v, w]_{Cl} \cdot w+[v,w]_{Cl}\cdot v\cdot w +
w\cdot v \cdot [v, w]_{Cl} + w\cdot [v,w]_{Cl}\cdot v\\
 &=&  2\frac{i}{\hbar}\Omega(v,w)(v\cdot w+w\cdot v).
 \end{eqnarray*}
Also $(\ul{v}\otimes v)(x)=\Omega(v,x)v$ and $[v\cdot v,x]_{Cl}=v\cdot
[v,x]_{Cl}+[v,x]_{Cl} \cdot v=2\frac{i}{\hbar}\Omega(v,x)v$.
 \end{proof}
The group $\Mpc(V,\Omega,j)$ is the set of  pairs of elements
$(U,g)$ with $U$ a unitary operator on $\H(V,\Omega,j)$ and $g \in
Sp(V,\Omega)$ satisfying (\ref{mpc:def}) i.e.~$U_j(g(v),t) =
U\,U_j(v,t)\,U^{-1}\quad\forall v,t$. We have a natural representation,
denoted $\mathbf{U}$, of $\Mpc(V,\Omega,j)$ on $\H(V,\Omega,j)$ defined
by
\[
\mathbf{U}(U,g):=U.
\] 
The smooth vectors for this representation coincide with the smooth
vectors for the representation of  $\H(V,\Omega,j)$. Differentiating
this relation we obtain a representation
\[
{\mathbf{U}}_* : \mpc(V,\Omega,j) \rightarrow \End(\H(V,\Omega,j)^\infty)
\]
 on the smooth vectors and for any $X\in \mpc(V,\Omega,j)$ and
 $f\in\H(V,\Omega,j)^\infty$ we have
\begin{equation}\label{eq:dotU}
\dot U_j\left(\sigma_*(X)v, s\right) f 
=\left[{\mathbf{U}}_*(X), \dot U_j\left(v, s \right)\right] f.
\end{equation}
\begin{lemma} \label{lemma:numeta}
For any $X\in \mpc(V,\Omega,j)$  we have 
\begin{equation}\label{Ustar}
{\mathbf{U}}_*(X)= cl\left(\nu ( \sigma_*(X))\right)+ \half \eta_*(X)\id.
\end{equation}
\end{lemma}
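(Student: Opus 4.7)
The plan is to reduce the statement to determining a single scalar, by exploiting the irreducibility of the Heisenberg action on $\H(V,\Omega,j)^\infty$. First I would differentiate the defining relation (\ref{mpc:def}) at the identity of $\Mpc(V,\Omega,j)$, which is exactly (\ref{eq:dotU}); specialising to $s=0$ gives
\[
[{\mathbf{U}}_*(X),\,cl(v)] \;=\; cl\bigl(\sigma_*(X)v\bigr) \qquad \forall\, v\in V.
\]
On the other hand, since $cl$ is an associative algebra representation of $Cl(V,\Omega)$ it intertwines the Clifford bracket with the operator commutator, so (\ref{eq:nu}) from Lemma \ref{lemma:nu} yields
\[
[cl(\nu(\sigma_*(X))),\,cl(v)] \;=\; cl\bigl([\nu(\sigma_*(X)),v]_{Cl}\bigr) \;=\; cl\bigl(\sigma_*(X)v\bigr).
\]
Consequently the difference $T(X) := {\mathbf{U}}_*(X) - cl(\nu(\sigma_*(X)))$ commutes with every $cl(v)$ on $\H(V,\Omega,j)^\infty$.

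The second step is a Schur-type argument: since the Heisenberg group, and hence the symplectic Clifford algebra, acts irreducibly on $\H(V,\Omega,j)^\infty$ by Stone--von~Neumann, any operator on this dense subspace commuting with all Clifford multiplications must be scalar. Thus $T(X) = c(X)\,\id$ for some linear form $c\colon \mpc(V,\Omega,j)\to\C$.

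It remains to identify $c$ with $\tfrac12\eta_*$. A short commutator computation, using that both ${\mathbf{U}}_*$ and $cl\circ\nu\circ\sigma_*$ are Lie algebra homomorphisms while $T$ is scalar-valued, shows that $c$ vanishes on $[\mpc,\mpc]$; the same holds for $\tfrac12\eta_*$. Via the Lie algebra splitting of (\ref{mpc:laext}) provided by $\tfrac12\eta_*$ we get $\mpc(V,\Omega,j)\cong \u(1)\oplus\sp(V,\Omega)$, and since $\sp(V,\Omega)$ is semisimple the commutator ideal is exactly the $\sp$ summand. Hence it suffices to compare the two linear forms on the central $\u(1)$. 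There $\sigma_*(X)=0$, so $T(X) = {\mathbf{U}}_*(X)$ acts as the scalar $X$, because by Corollary \ref{thm:eta} the central $U(1)\subset\Mpc(V,\Omega,j)$ consists of scalar operators on $\H$; by the same corollary $\eta$ restricts to the squaring map on this central $U(1)$, so $\tfrac12\eta_*(X)=X$ as well.

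I expect the main obstacle to be the Schur step at the level of smooth vectors: one has to verify that an operator on $\H(V,\Omega,j)^\infty$ commuting with every $cl(v)$ is genuinely a scalar, rather than merely scalar on some dense subspace, by extending it to an operator commuting with the full unitary Heisenberg action and then applying Stone--von~Neumann uniqueness properly.
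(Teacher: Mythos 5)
Your proof is correct and takes essentially the same approach as the paper: differentiate the defining relation of $\Mpc$, use Lemma \ref{lemma:nu} to conclude $T(X)={\mathbf{U}}_*(X)-cl(\nu(\sigma_*X))$ commutes with Clifford multiplication, invoke the irreducibility of the Heisenberg action (the paper cites \cite{refs:Cartier} for the smooth-vector Schur step you flag as a potential obstacle), observe that $T$ is then a character of $\mpc$, and fix the scalar $\half$ by evaluating on the central $\u(1)$ where $\sigma_*$ vanishes (the paper phrases this via an element of $\muc$ with $\sigma_*X=0$ and the relation $\eta_*=2\lambda_*$ there, which is the same computation).
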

\begin{proof}
By equation (\ref{eq:dotU}) and the definition of the Clifford multiplication,  we have 
\[
\left[{\mathbf{U}}_*(X), \dot U_j\left(v,0\right)\right] f
=\left[{\mathbf{U}}_*(X), cl(v)\right] f=
cl(\sigma_*(X)v)f;
\]
and by Lemma \ref{lemma:nu} $\sigma_*(X)v =[\nu (\sigma_*X),v]_{Cl}$ so that
\[
\left[{\mathbf{U}}_*(X), cl(v)\right] f = cl\left([\nu (\sigma_*X),v]_{Cl}\right) f
=\left[ cl \left(\nu(\sigma_*X)\right), cl(v)\right] f
\]
which shows that ${\mathbf{U}}_*(X)-cl \left( \nu(\sigma_*X)\right)$
commutes with the action of the Clifford multiplication, hence commutes
with the representation of the Heisenberg Lie algebra and so is a multiple
of the identity (see \cite{refs:Cartier}). Since $\mpc(V,\Omega,j)
\map[\sigma_*\times\eta_*]\sp(V,\Omega)\times  \u(1)$ is an isomorphism
of Lie algebras and $\sp(V,\Omega)$ is semisimple, any character of
$\mpc(V,\Omega,j)$ is a multiple of $\eta_*$ and we have
\[
{\mathbf{U}}_*(X)=cl \left( \nu(\sigma_*X)\right)+ a \eta_*(X)\id
\]
for an $a\in \C$. Evaluating the above for an element $X\in \muc
(V,\Omega,j)\isom[\sigma_*\times\lambda_*]\u(V,\Omega,j)\times \u(1)$
such that $\sigma_*X=0$ and $\lambda_*(X)=c$ we have, since 
${\mathbf{U}}_*(X)f=cf$ and $\eta_*(X)=2\lambda_*(X)$, that $a=\half$.
\end{proof}

%%%%
\subsection{Subgroups of $Sp(V,\Omega)$ lifting to $\Mpc (V, \Omega, j)$}\label{subsection:lift}

\begin{definition} If $H$ is a Lie subgroup of $Sp(V,\Omega)$ we say that
$H$ \textit{lifts} or is \textit{liftable} to $\Mpc (V, \Omega, j)$ if there is a smooth
homomorphism $F : H \to \Mpc (V, \Omega, j)$ with $\sigma \circ F = \Id_H$.
\end{definition}

\begin{example}
We have seen (Remark \ref{rem:Uliftable}) that given any positive
compatible complex structure $j\in j_+(V,\Omega)$, the inclusion of the
unitary group $U(V,\Omega,j)$ in the symplectic group $Sp(V,\Omega)$
lifts to an embedding $F_j : U(V,\Omega,j) \to \Mpc(V,\Omega,j)$. An
element $g\in U(V,\Omega,j)$ is mapped to the element  $F_j(g)$ in
$\Mpc(V,\Omega,j)$ parametrised by $g,1$, so $F_j(g)=(U,g)$ where the
kernel $U(z,v)$ of $U$ is given by $U(z,v) =  \exp
\frac1{2\hbar}\langle g^{-1}z,v\rangle_j=e_v(g^{-1}z)$. Hence 
\begin{equation}\label{UinMpc}
F_j: U(V,\Omega,j)\rightarrow \Mpc(V,\Omega,j): 
g\mapsto F_j(g)=(U,g)~ \textrm{ with }~(Uf)(z)=f(g^{-1}z).
\end{equation}
\end{example}

We look at some other subgroups of $Sp(V,\Omega)$ which are liftable.
A Lie subgroup $H$ of $Sp(V,\Omega)$ is liftable if there is a 
homomorphism $F : H \to \Mpc(V,\Omega,j)$ with $F(g)=(U,g)$ where the unitary operator $U$ has parameters $(g,f(g))$. 
By Theorem \ref{thm:mpcproduct} what is needed to obtain such a homomorphism 
is a smooth $\C^*$-valued function $f$ on $H$ such that 
\begin{equation}\label{eq:f1}
f(g_1g_2) = f(g_1)f(g_2)e^{-\frac12 a\left(1-Z_{g_1}Z_{g_2^{-1}}\right)}
\end{equation}
and 
\begin{equation}\label{eq:f2}
|f(g)^2\Det_jC_g| = 1, \qquad \forall g \in H.
\end{equation}

Note that from equations (\ref{eq:f1},\ref{eq:f2}) two lifts of $H$
differ by a homomorphism $H \to U(1)$, so in general we may not have 
uniqueness of the lift. In summary

\begin{theorem}\label{thm:lift2mpc}
A Lie subgroup $H$ of $Sp(V,\Omega)$ is liftable to $\Mpc(V,\Omega,j)$
if and only if there is a smooth complex-valued function $f$ on $H$
satisfying equations (\ref{eq:f1},\ref{eq:f2}). The group $\Hom(H,U(1))$
acts simply transitively on the set of lifts of $H$. 
\end{theorem}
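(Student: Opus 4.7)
The plan is to reduce the existence of a lift to the existence of a scalar-valued function on $H$ by exploiting the explicit parametrisation of $\Mpc(V,\Omega,j)$ provided by Theorems \ref{thm:Uparameters} and \ref{thm:mpcproduct}. The central idea is that any element of $\Mpc(V,\Omega,j)$ over a given $g\in Sp(V,\Omega)$ is completely determined by a single scalar $\lambda\in\C^*$ (subject to $|\lambda^2\Det_jC_g|=1$), so a lift of $H$ is really a choice of such a scalar $f(g)$ for every $g\in H$, varying smoothly and compatibly with products.

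For the forward direction, suppose $F\colon H\to\Mpc(V,\Omega,j)$ is a smooth homomorphism with $\sigma\circ F=\Id_H$. Writing $F(g)=(U_g,g)$, Theorem \ref{thm:Uparameters} yields a unique scalar $f(g)$ such that $U_g=U_{g,f(g)}$, and the constraint $|f(g)^2\Det_jC_g|=1$ is precisely (\ref{eq:f2}). Since $f(g)$ is recovered from $F(g)$ via the smooth formula $f(g)=(U_ge_0)(0)$, smoothness of $F$ makes $f$ smooth. Applying $F$ to a product $g_1g_2$ and comparing with Theorem \ref{thm:mpcproduct} gives (\ref{eq:f1}). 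Conversely, given a smooth $f$ satisfying (\ref{eq:f1}) and (\ref{eq:f2}), I would define $F(g):=(U_{g,f(g)},g)$; condition (\ref{eq:f2}) guarantees that $U_{g,f(g)}$ is a genuine unitary element of $\Mpc(V,\Omega,j)$, and (\ref{eq:f1}) combined with Theorem \ref{thm:mpcproduct} is precisely the statement $F(g_1g_2)=F(g_1)F(g_2)$, so $F$ is a smooth lift.

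For the second assertion, given two lifts $F_1,F_2$ with associated functions $f_1,f_2$, set $\chi(g):=f_2(g)/f_1(g)$. Condition (\ref{eq:f2}) for both $f_i$ forces $|\chi(g)|=1$, so $\chi\colon H\to U(1)$, and dividing (\ref{eq:f1}) for $f_2$ by (\ref{eq:f1}) for $f_1$ cancels the exponential factor and yields $\chi(g_1g_2)=\chi(g_1)\chi(g_2)$, so $\chi\in\Hom(H,U(1))$. Conversely, if $F$ is a lift with function $f$ and $\chi\in\Hom(H,U(1))$, then $\chi\cdot f$ still satisfies (\ref{eq:f1}) and (\ref{eq:f2}), defining a new lift; and distinct $\chi$ give distinct lifts since $\chi$ is reconstructed as $f_2/f_1$. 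This is exactly a simply transitive action.

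There is no genuine analytical difficulty: the only place requiring care is the smoothness of $f$, which is handled by observing that the map $(U,g)\mapsto(Ue_0)(0)$ provides a smooth local section of the central extension (\ref{mpc:ext}), so $f$ inherits smoothness from $F$. The rest is straightforward bookkeeping with the parameters $(g,\lambda)$, once Theorems \ref{thm:Uparameters} and \ref{thm:mpcproduct} are in hand.
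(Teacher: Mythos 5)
Your proposal is correct and follows essentially the same route as the paper: the paper derives conditions (\ref{eq:f1}) and (\ref{eq:f2}) in the paragraph immediately preceding the statement, notes that two lifts differ by a homomorphism into $U(1)$, and then records the theorem as a summary, which is exactly the bookkeeping you carry out. Your remark on smoothness of $f$ via $\lambda = (U e_0)(0)$ is the right observation (though calling it a ``smooth local section'' of the extension is a slight misnomer — it is the $\lambda$-coordinate of the parametrisation, not a section — the substance is correct).
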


\begin{remark}
In the case of the unitary group $U(V,\Omega,j)$ we can simply take $f\equiv1$.
\end{remark}

If we square equation (\ref{eq:f1}) we have
\[
f(g_1g_2)^2 = f(g_1)^2f(g_2)^2\Det_j\left(1-Z_{g_1}Z_{g_2^{-1}}\right)^{-1}
= f(g_1)^2f(g_2)^2\Det_j\left( C_{g_1g_2}C_{g_1}^{-1}C_{g_2}^{-1}\right)^{-1}
\]
and hence the function $f$ must satisfy
\[
g \mapsto f(g)^2 \Det_j( C_g)
\]
is a homomorphism from $H$ to $U(1)$. Thus in order to have a lift $H$ must consist
of a submanifold of elements $g$ for which $\Det_j(C_g)$ differs from a smooth 
square by a homomorphism. In fact this property is almost all that is required 
apart from a means of picking smooth square roots as the following Lemma shows:

\begin{lemma}\label{lem:non-connlifts}
Let $j \in j_+(V,\Omega)$ and $H \subset Sp(V,\Omega)$ be a Lie subgroup
such that $H_1 = H \cap U(V,\Omega,j)$ meets every connected component of
$H$. Suppose there is a smooth map $f : H \to \C^*$ such that
\begin{itemize}
\item[(i)] $f|_{H_1}$ is a homomorphism of $H_1$ into $\C^*$;
\item[(ii)] $g \mapsto f(g)^2 \Det_j C_g$ is a homomorphism of $H$ into
$U(1)$.
\end{itemize}
Then $(g,f(g))$ are the parameters of an element $F(g)=(U,g)$ of
$\Mpc(V,\Omega,j)$ and $F : H \to \Mpc(V,\Omega,j)$ is a homomorphism
with $\sigma \circ F = \Id_{H}$.
\end{lemma}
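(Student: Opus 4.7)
The plan is to verify two things: that $(g,f(g))$ are genuinely valid parameters of an element of $\Mpc(V,\Omega,j)$, and that $F$ so defined satisfies the cocycle identity (\ref{eq:f1}), whereupon Theorem \ref{thm:mpcproduct} forces it to be a homomorphism. The first point is immediate from hypothesis (ii): a homomorphism into $U(1)$ has values of unit modulus, which is exactly the compatibility constraint $|f(g)^2\Det_jC_g|=1$ of Theorem \ref{thm:Uparameters}. The relation $\sigma\circ F=\Id_H$ is then automatic from the parametrisation. So the real content of the lemma is the cocycle identity.

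To attack (\ref{eq:f1}) I will measure its failure by the defect
\[
\phi(g_1,g_2):=\frac{f(g_1)f(g_2)}{f(g_1g_2)}\exp\bigl(-\half\, a(1-Z_{g_1}Z_{g_2^{-1}})\bigr),
\]
whose identical equality with $1$ is exactly what must be shown. Squaring $\phi$ converts the exponential, via (\ref{eq:defa}), into $\Det_j(1-Z_{g_1}Z_{g_2^{-1}})^{-1}$; the product identity $C_{g_1g_2}=C_{g_1}(1-Z_{g_1}Z_{g_2^{-1}})C_{g_2}$ from the parametrisation subsection then rewrites this as $\Det_jC_{g_1}\Det_jC_{g_2}/\Det_jC_{g_1g_2}$. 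Hypothesis (ii) now collapses $\phi(g_1,g_2)^2$ to $1$, so $\phi$ is a smooth map $H\times H\to\{\pm 1\}$, necessarily constant on each connected component of $H\times H$.

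The main obstacle -- and the very reason for the hypothesis on $H_1$ -- is pinning down the sign after squaring. Because every component of $H$ meets $H_1$, every component of $H\times H$ contains a pair $(h_1,h_2)\in H_1\times H_1$. For any $h\in H_1\subset U(V,\Omega,j)$ the relation $hj=jh$ gives $jhj=-h$, so $D_h=\half(h+jhj)=0$ and hence $Z_h=C_h^{-1}D_h=0$. Thus $1-Z_{h_1}Z_{h_2^{-1}}=I$ with $a(I)=0$ by normalisation, while the ratio $f(h_1)f(h_2)/f(h_1h_2)$ equals $1$ because $f|_{H_1}$ is a homomorphism by (i). Consequently $\phi(h_1,h_2)=1$ on every component, so $\phi\equiv 1$ on $H\times H$, establishing (\ref{eq:f1}) and completing the proof.
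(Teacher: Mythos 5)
Your argument is correct and essentially identical to the paper's proof: both square the discrepancy function, use the $C$-product identity and (\ref{eq:defa}) together with hypothesis (ii) to show it is $\pm1$-valued, and then use continuity plus the fact that $H_1$ meets every component (together with $Z_h=0$ on $H_1$ and (i)) to force the value $1$. The only cosmetic difference is that your $\phi$ is the reciprocal of the paper's $\varepsilon$.
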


\begin{proof}
Let $\psi(g)=f(g)^2\Det_jC_g$ then by (ii) $\psi$ is a homomorphism into
$U(1)$ so in particular $|f(g)^2\Det_jC_g|=1$ hence $(g,f(g))$ are the
parameters of an element $F(g) = (U,g)$ of $\Mpc(V,\Omega,j)$. We can
then write $f(g)^2 =\psi(g)\Det_jC_g^{-1}$ for a homomorphism $\psi$ so
\begin{eqnarray*}
f(g_1g_2)^2f(g_1)^{-2}f(g_2)^{-2} 
&=& \Det_j\left(C_{g_2}C_{g_1g_2}^{-1}C_{g_1}\right)\\
&=& \Det_j\left(1-Z_{g_1}Z_{{g_2}^{-1}}\right)^{-1}\\
&=& e^{-a(1-Z_{g_1}Z_{{g_2}^{-1}})}.
\end{eqnarray*}
Thus
\[
\varepsilon(g_1,g_2) 
= f(g_1g_2)f(g_1)^{-1}f(g_2)^{-1}e^{\half a(1-Z_{g_1}Z_{{g_2}^{-1}})} 
\]
is a continuous function on $H\times H$ taking values in $\{1,-1\}$. On
$H_1$ $f$ is a homomorphism and $Z_{g} = 0$. Thus $\varepsilon= 1$ on
$H_1\times H_1$. Since $H_1$ meets every component of $H$ it follows
that $\varepsilon =1$ on $H\times H$. We thus have
\[
f(g_1g_2) 
= f(g_1)f(g_2)e^{-\half a(1-Z_{g_1}Z_{{g_2}^{-1}})} 
\]
which makes $F$ a homomorphism by Theorem \ref{thm:lift2mpc}.
\end{proof}

\begin{remark}
This Lemma also determines which subgroups $H$ are liftable to the
metaplectic group $Mp(V,\Omega,j)$ since, by Definition
\ref{def:metaplectic}, the character in (ii) has to be trivial. Thus we
need a smooth square root $f$ on $H$ of the function $\Det_j (C_g)^{-1}$
which is a character on $H\cap U(V,\Omega,j)$. 
\end{remark}

\subsubsection{The pseudo-unitary subgroup}

Suppose we have  a compatible complex structure ${\jtilde}\in
j(V,\Omega)$ which is not (necessarily) positive definite. It is
always possible to find a positive compatible complex structure $j\in
j_+(V,\Omega)$ which commutes with ${\jtilde}$.
One has: 
\begin{proposition} \label{lemma:split} 
The pseudo-unitary group $U(V,\Omega,{\jtilde}):=
\left\{ g\in Sp(V,\Omega)\,\middle\vert\, g{\jtilde}={\jtilde}g\right\}$ 
embeds in
$\Mpc(V,\Omega,j)$ for a $j\in j_+(V,\Omega)$ commuting with
${\jtilde}$. The map
\begin{equation}\label{Ftldejj}
F_{\jtilde,j} :  U(V,\Omega,{\jtilde})\rightarrow \Mpc(V,\Omega,j)
:  g \mapsto  F_{\jtilde,j}(g) = (U_{g,\lambda},g)
\end{equation}
with $\lambda=\left({\Det}_j C_{g}^-\right)^{-1}$ where $C_g^-$
is the restriction of $C_g=\half(g-jgj)$ to the invariant subspace
$V_-=\{\, v\in V \, \vert \,  {\jtilde}v=-jv\, \}$, is an injective
homomorphism lifting the inclusion of $U(V,\Omega,{\jtilde})$ in
$Sp(V,\Omega)$.
\end{proposition}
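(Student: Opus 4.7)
The plan is to apply Lemma \ref{lem:non-connlifts} to $H=U(V,\Omega,\jtilde)$ with the candidate $f(g)=(\Det_j C_g^-)^{-1}$. Because $j\jtilde=\jtilde j$ and each squares to $-\Id$, the operator $j^{-1}\jtilde$ is an involution and gives the $\Omega$-orthogonal decomposition $V=V_+\oplus V_-$, with $V_\pm=\{v\mid\jtilde v=\pm jv\}$ stable under both complex structures. Since any $g\in H$ is $\jtilde$-linear, its block form on $V_+\oplus V_-$ has $j$-linear diagonal blocks $A\colon V_+\to V_+$ and $D\colon V_-\to V_-$ and $j$-antilinear off-diagonal blocks $B\colon V_-\to V_+$ and $C\colon V_+\to V_-$. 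A short calculation shows that $C_g=\frac12(g-jgj)$ is block diagonal with $C_g^+=A$ and $C_g^-=D$, so $\Det_j C_g=\Det_j A\cdot\Det_j D$ and $f(g)^2\Det_j C_g=\Det_j A/\Det_j D$.

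Hypothesis (i) of Lemma \ref{lem:non-connlifts} is immediate: $H_1=H\cap U(V,\Omega,j)$ forces $B=C=0$, so $H_1$ is the product of the unitary groups of $V_+$ and $V_-$ (the maximal compact subgroup of $U(p,q)$); on $H_1$, $C_g=g$ and $f|_{H_1}$ is visibly a homomorphism. Hypothesis (ii) is the main content: I identify $\chi(g):=f(g)^2\Det_j C_g=\Det_j A/\Det_j D$ with the pseudo-unitary character $\Det_{\jtilde}\colon H\to U(1)$ given by the complex determinant of $g$ viewed as a $\jtilde$-linear endomorphism of $V$. Using Schur block-triangularisation, and invertibility of $D$ (from $(1-Z_g^2)^{-1}=C_g^*C_g$ positive definite), one has $\Det_{\jtilde}g=\Det_j(A-BD^{-1}C)\cdot\overline{\Det_j D}$, where the conjugation on the second factor reflects $\jtilde|_{V_-}=-j$. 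The pseudo-unitary relations in block form, $A^*A-C^\dagger C=I$ and $A^*B=C^\dagger D$ (with $\dagger$ denoting the antilinear adjoint natural for $B,C$), combine to give $A-BD^{-1}C=(A^*)^{-1}$; hence $\Det_{\jtilde}g=\overline{\Det_j A}^{-1}\overline{\Det_j D}$. Pseudo-unitarity also gives $|\Det_j A|=|\Det_j D|$ (via $A^*A=I+C^\dagger C$, $DD^*=I+CC^\dagger$, and the identity $\Det_j(I+C^\dagger C)=\Det_j(I+CC^\dagger)$), so $\overline{\Det_j D}/\overline{\Det_j A}=\Det_j A/\Det_j D$, proving $\chi=\Det_{\jtilde}$ and so verifying (ii).

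Finally, $H\cong U(p,q)$ is connected, so $H_1$ trivially meets the unique component and Lemma \ref{lem:non-connlifts} yields the homomorphism $F_{\jtilde,j}\colon H\to\Mpc(V,\Omega,j)$, which is injective from $\sigma\circ F_{\jtilde,j}=\Id_H$. The main obstacle is hypothesis (ii), specifically the identification $\chi=\Det_{\jtilde}$. A direct verification of multiplicativity from Theorem \ref{thm:mpcproduct} would reduce to the identity $\Det_j(1-Z_{g_1}Z_{g_2^{-1}})|_{V_+}=\Det_j(1-Z_{g_1}Z_{g_2^{-1}})|_{V_-}$, which is awkward because the cyclic identity $\Det_j(1-XY)=\Det_j(1-YX)$ holds only up to complex conjugation when $X,Y$ are $j$-antilinear; the Schur-complement route exploiting pseudo-unitarity is therefore cleaner.
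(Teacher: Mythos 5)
Your proof is correct, and it arrives at the same key identity as the paper---namely $f(g)^2\,\Det_j C_g=\Det_{\jtilde}(g)$, with $f(g)=(\Det_j C_g^-)^{-1}$---but by a genuinely different computation. The paper works entirely inside the $(C_g,Z_g)$ parametrisation: it writes $g=C_g(\id+Z_g)$, notes that $C_g$ is block-diagonal and $Z_g$ block-off-diagonal for the splitting $V=V_+\oplus V_-$, Schur-reduces $\Det_{\jtilde}(\id+Z_g)$ to $\overline{\Det_j^{V_-}(\id-Z_g'Z_g'')}$, and then uses the general symplectic identity $\id-Z_g^2=(C_g^*C_g)^{-1}$ to collapse that determinant to $|\Det_jC_g^-|^{-2}$. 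You instead block-decompose $g$ itself as $\left(\begin{smallmatrix}A&B\\ C&D\end{smallmatrix}\right)$ with $j$-linear diagonal and $j$-antilinear off-diagonal blocks, observe that $C_g=\mathrm{diag}(A,D)$ immediately (so the target identity becomes $\Det_jA/\Det_jD=\Det_{\jtilde}g$), and establish it via the Schur complement $A-BD^{-1}C=(A^*)^{-1}$ and the Sylvester-type relation $|\Det_jA|=|\Det_jD|$, both of which follow from the $U(p,q)$-style pseudo-unitary block relations. Your route is more elementary and closer to the classical $U(p,q)$ calculation; the paper's route stays within the $(C_g,Z_g)$ formalism it has already built for $\Mpc$. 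Your concluding remark about why the cyclic identity $\Det_j(1-XY)=\Det_j(1-YX)$ is unsafe for $j$-antilinear $X,Y$ is a correct and useful observation; it explains why a naive multiplicativity check from Theorem~\ref{thm:mpcproduct} would be delicate. Both proofs then invoke Lemma~\ref{lem:non-connlifts} with the same checks: on $H_1=H\cap U(V,\Omega,j)$ the off-diagonal blocks vanish so $f|_{H_1}$ is a homomorphism, and $U(V,\Omega,\jtilde)$ is connected.
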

\begin{proof}
Since ${\jtilde}j=j{\jtilde}$, we have $({\jtilde}j)^2=\id$ so that 
\[
V=V_+\oplus V_-
\] where 
\[
V_\pm=\left\{ v\in V \, \middle\vert \,  {\jtilde}jv=\mp v \right\} 
=\{ v\in V \, \vert \,  {\jtilde}v=\pm jv \}
\]
are symplectic subspaces of $V$ which are ${\jtilde}$- and $j$-stable, and
orthogonal for $\Omega$, $G_j$ and $G_{\jtilde}$ (as  in Definition \ref{def:Gj}).

Picking $g\in U(V,\Omega,{\jtilde})$, we write $g=C_g(\id + Z_g)$ with
$C_g=\half(g-jgj) \in GL(V,j)$ and $C_gZ_g=\half(g+jgj)$. Since $g$ and
$j$ commute with ${\jtilde}$, $C_g$ commutes with ${\jtilde}$ (and with
$j$) hence $C_g$ preserves the splitting $V=V_+\oplus V_-$, so gives
elements $C_g^+$ of $GL(V_+,j)$ and $C_g^-$ of $GL(V_-,j)$. On the other
hand $Z_g$ is ${\jtilde}$-linear but $j$-antilinear, so
$Z_g(V_\pm)\subset V_\mp$; let $Z'_g:=Z_{g\vert_{V_+}}: V_+\rightarrow
V_-$ and $Z''_g:=Z_{g\vert_{V_-}}: V_-\rightarrow V_+$.

Since $g$ is ${\jtilde}$-linear, we want to express its complex
determinant $\Det_{\jtilde}g$ in terms of those parameters:
\begin{eqnarray*}
\Det_{\jtilde}g&=&\Det_{\jtilde}(C_g)\Det_{\jtilde}(\id+Z_g)
=\Det_{\jtilde}(C_g^+)\Det_{\jtilde}(C_g^-))\Det_{\jtilde}(\id+Z_g)\\
&=&\Det_j(C_g^+)\overline{\Det_j(C_g^-)})\Det_{\jtilde}(\id+Z_g).
\end{eqnarray*}
In a basis adapted to the decomposition $V=V_+\oplus V_-$ the matrix of
$Z_g$ has the form $\left(\matrix{0&Z''_g \cr Z'_g&0}\right)$ so that
\[
\id+Z_g=\left(\matrix{\id&Z''_g \cr
Z'_g&\id}\right)=\left(\matrix{\id&0 \cr Z'_g&\id}\right)
\left(\matrix{\id&Z''_g \cr 0&\id-Z'_gZ''_g}\right).\]
Thus
$\Det_{\jtilde}(\id+Z_g)=\Det_{\jtilde}^{V_-}(\id-Z'_gZ''_g)
=\overline{\Det_j^{V_-}(\id-Z'_gZ''_g)}$.

Now $\id-Z^2_g=\left(C_g^{*}C_g\right)^{-1}$; hence 
\[
\id-Z^2_g=\left(\matrix{\id-Z''_gZ'_g&0 \cr 0&\id-Z'_gZ''_g}\right)
=\left(\matrix{\left(C_g^{+*}C^+_g\right)^{-1}
&0 \cr 0&\left(C_g^{-*}C^-_g\right)^{-1}}\right)
\]
so that 
$\Det_j^{V_-}(\id-Z'_gZ''_g)=\Det_j \left(C_g^{-*}C^-_g\right)^{-1}
=\overline{\Det_j( C_g^{-})}^{-1}{\Det_j(C^-_g)}^{-1} $.
Hence we have 
\[
\Det_{\jtilde}g=\Det_j(C_g^+)\overline{\Det_j(C_g^-)}
\Det_j (C_g^{-})^{-1}\overline{\Det_j(C^-_g)}^{-1}
=\Det_j(C_g^+)\Det_j (C_g^{-})^{-1}
\]
so that
\begin{equation}\label{DetjCg}
\Det_jC_g= \Det_j(C_g^+)\Det_j(C_g^-)=\Det_{\jtilde}g (\Det_j(C_g^-))^2.
\end{equation}

Equation (\ref{DetjCg}) suggests defining, for $g \in U(V,\Omega,\jtilde)$,
\[
f_{\jtilde,j}(g) = \Det_j (C^{-}_g)^{-1}
\]
so that $f_{\jtilde,j}(g) ^2 \Det_jC_g = \Det_{\jtilde}g$ is a homomorphism of
$U(V,\Omega, \jtilde) \to U(1)$. On  $U(V,\Omega, \jtilde)\cap U(V,\Omega, j)$
we have $C_g^- = g\vert_{V_{-}}$ and so $f_{\jtilde,j}$ is a homomorphism. Further
$U(V,\Omega,\jtilde)$ is connected so that all the conditions of Lemma \ref{lem:non-connlifts}
are satisfied. Hence we have a homomorphism
\[
F_{\jtilde,j}:U(V,\Omega,{\jtilde})\rightarrow
\Mpc(V,\Omega,j),\qquad F_{\jtilde,j}(g) = (U_{g,\lambda},g)
\]
with $\lambda = f_{\jtilde,j}(g) = \Det_j (C^{-}_g)^{-1}$ as claimed.
\end{proof}

%%%%%%%%%
\subsubsection{The stabiliser of a real Lagrangian subspace}

If $(V,\Omega)$ is a real symplectic vector space of dimension $2n$ and
$D \subset V$ is a subspace then we denote by $D^{\perp}$ its
$\Omega$-orthogonal
\[
D^{\perp} = \left\{ v \in V \,\middle|\, \Omega(v,w) = 0, \forall w \in D\right\}. 
\]
$D$ is called \textit{isotropic} if $D \subset D^{\perp}$ and (real)
\textit{Lagrangian} if it is maximal isotropic which is the case when
$D=D^{\perp}$ so its dimension is $n$, half the dimension of $V$. 
We denote by $\Lambda(V,\Omega)$ the set of real Lagrangian subspaces
of $(V,\Omega)$. For $F \in \Lambda(V,\Omega)$
let
\[
Sp(V,\Omega,F) = \left\{ g\in Sp(V,\Omega) \,\middle|\, g(F) \subset F\right\}.
\]

If we take a PCCS $j \in j_+(V,\Omega)$ then $jF \in
\Lambda(V,\Omega)$ and $F$ and $jF$ are $G_j$-orthogonal so $V = F + jF$
is a $G_j$-orthogonal decomposition. Relative to the direct sum $F\oplus F$,
an element $g \in Sp(V,\Omega,F)$ will be triangular
\[
g \leftrightarrow \left(\begin{array}{cc} A_g& B_g\\ 0 & E_g\end{array}\right)
\]
with $A_g, E_g \in GL(F)$, $B_g \in \End(F,F)$,
and $g \mapsto A_g$ is a homomorphism from $Sp(V,\Omega,F)$ to $GL(F)$.
Here $g$ acts on $v+jw$ whilst the RHS acts on $\left(\begin{array}{c} v\\
w\end{array}\right)$.
For such a $g$ to be symplectic we have
\begin{eqnarray*}
\Omega(v_1+jw_1,v_2+jw_2) &=& \Omega(g(v_1+jw_1),g(v_2+jw_2))\\
&=& \Omega(A_gv_1 + B_gw_1 + jE_gw_1, A_gv_2 + B_gw_2+ jE_gw_2)\\
&=& G_j(A_gv_1+B_gw_1, E_gw_2) - G_j(E_gw_1, A_gv_2 + B_gw_2)
\end{eqnarray*}
whilst
\[
\Omega(v_1+jw_1,v_2+jw_2) = G_j(v_1,w_2) -G_j(w_1,v_2).
\]
Hence
\[
G_j(A_gv_1,E_gw_2)= G_j(v_1,w_2) \quad\mathrm{and}\quad 
G_j(B_gw_1, E_gw_2) - G_j(E_gw_1, B_gw_2) = 0.
\]
Thus
\[
E_g = {(A_g^T)}^{-1} \quad\mathrm{and}\quad E_g^TB_g = B_g^TE_g
\]
where the transpose is taken relative to $G_j$. Then $B_g= A_gS_g$
for a symmetric endomorphism $S_g$ of $F$. Thus
\[
g \leftrightarrow \left(\begin{array}{cc} A_g& A_gS_g\\ 
 0 & {(A_g^T)}^{-1}\end{array}\right).
\]
In these terms, 
\[
j \leftrightarrow  \left(\begin{array}{cc} 0& -I\\ 
 I & 0\end{array}\right)
\]
and so 
\[
2C_g \leftrightarrow \left(\begin{array}{cc} A_g& A_gS_g\\ 
 0 & {(A_g^T)}^{-1}\end{array}\right)
 -
 \left(\begin{array}{cc} -{(A_g^T)}^{-1}& 0\\ 
A_gS_g & -A_g\end{array}\right)
=  \left(\begin{array}{cc} A_g + (A_g^T)^{-1} & A_gS_g\\ 
-A_gS_g& A_g + (A^T)^{-1}\end{array}\right).
\]
Note that the complex vector space $(V,j)$ can be identified
with $F^{\C}$ and under this identification $C_g$ becomes 
a complex linear endomorphism of $F^{\C}$
\[
C_g \leftrightarrow \half \left(A_g + (A_g^T)^{-1} - i A_gS_g\right) 
= A_g \left(\half ( I_F + (A_g^TA_g)^{-1} -iS_g)\right).
\]
In particular
\[
\Det_j C_g = \Det_F A_g \Det_{F^{\C}} 
\left(\half ( I_F + (A_g^TA_g)^{-1} -iS_g)\right).
\]

We now proceed as in the pseudo-Hermitean case to construct a lift using
a smooth square root of the last determinant which
exists since the term $\half ( I_F + (A_g^TA_g)^{-1} -iS_g)$ has strictly
positive real part in the complex general linear group. We set
\[
f_F(g) = |\Det_F(A_g)|^{-\half}e^{-\half a_F\left( 
\half ( I_F + (A_g^TA_g)^{-1} -iS_g)\right)}
\]
where $a_F$ is the smooth complex valued function on the open set in $GL(F^{\C})_+$
of elements with positive definite real part relative to the Hermitean extension
of $G_j$ to $F^{\C}$ such that
\[
\Det_{F^{\C}} (g) = e^{a_F(g)}, \quad a_F(I) = 0 
\]
analogously to $a$ in  (\ref{eq:defa}). Then
\[
f_F(g)^2\Det_j C_g = \Det_F(A_g) / |\Det_F(A_g)| \in U(1)
\]
and the RHS is a homomorphism so $(ii)$ of Lemma \ref{lem:non-connlifts}
holds whilst $g \in H_1= Sp(V,\Omega,F) \cap U(V,\Omega,j)$ implies $gjF
= jgF \subset jF$ so $S_g=0$ and $A_g$ is in the orthogonal group of
$(F, G_j)$. Thus $H_1$ meets all components of $Sp(V,\Omega,F)$ and
$f_F|_{H_1}\equiv 1$ so $(i)$ of Lemma \ref{lem:non-connlifts} also
holds. Thus all the conditions of Lemma \ref{lem:non-connlifts} are
satisfied proving

\begin{proposition}\label{prop:realLgngn}
If $F \subset V$ is a real Lagrangian subspace of $(V,\Omega)$ and $j
\in j_+(V,\Omega)$ then $Sp(V,\Omega,F)$ has a homomorphism into
$\Mpc(V,\Omega,j)$ lifting the inclusion $Sp(V,\Omega,F) \subset
Sp(V,\Omega)$. This lifting maps $g \in  Sp(V,\Omega,F)$ to the element of
$\Mpc(V,\Omega,j)$ with parameters $(g, f_F(g))$ where
\[
f_F(g) = |\Det_F(A_g)|^{-\half}e^{-\half a_F\left( 
\half ( I_F + (A_g^TA_g)^{-1} -iS_g)\right)}
\]
where $A_g = g|_F$ and $S_g$ is the endomorphism of $F$ determined by:
$A_gS_gv$ is the $F$ component of $gjv$ relative to the decomposition $V
= F + jF$.
\end{proposition}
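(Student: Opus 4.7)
My plan is to apply Lemma \ref{lem:non-connlifts} to the explicit candidate $f_F$, since almost all the groundwork has been laid in the discussion preceding the statement. Recall that fixing the $G_j$-orthogonal decomposition $V = F \oplus jF$ writes any $g \in Sp(V,\Omega,F)$ as the upper-triangular block $\begin{pmatrix} A_g & A_g S_g \\ 0 & (A_g^T)^{-1}\end{pmatrix}$ with $S_g$ symmetric, and the computation already carried out identifies $C_g$, under $(V,j) \cong F^{\C}$, with $A_g\bigl(\half(I_F + (A_g^T A_g)^{-1} - i S_g)\bigr)$. This gives the factorisation
\[
{\Det}_j C_g = {\Det}_F(A_g)\,{\Det}_{F^{\C}}\bigl(\half(I_F + (A_g^T A_g)^{-1} - i S_g)\bigr).
\]

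First I would verify that $f_F$ is well defined and smooth: the argument $\half(I_F + (A_g^T A_g)^{-1} - i S_g)$ has real part $\half(I_F + (A_g^T A_g)^{-1})$, which is positive definite because $A_g^T A_g$ is, so it lies in $GL(F^{\C})_+$ where $a_F$ is defined and holomorphic. Moreover $A_g$, $(A_g^T A_g)^{-1}$ and $S_g$ depend smoothly on $g$, so $f_F$ is smooth on $Sp(V,\Omega,F)$.

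Next I would check condition (ii) of Lemma \ref{lem:non-connlifts}: directly from the defining formula,
\[
f_F(g)^2 {\Det}_j C_g = |{\Det}_F(A_g)|^{-1}\,{\Det}_F(A_g) = {\Det}_F(A_g)/|{\Det}_F(A_g)| \in U(1),
\]
and since $g \mapsto A_g$ is a group homomorphism $Sp(V,\Omega,F) \to GL(F)$, the right-hand side is a homomorphism into $U(1)$. For condition (i), if $g \in H_1 = Sp(V,\Omega,F) \cap U(V,\Omega,j)$ then $g$ commutes with $j$, so for $v \in F$ we have $gjv = jgv \in jF$, which forces $S_g = 0$; moreover $g$ preserves $G_j$ so $A_g \in O(F, G_j)$, whence $A_g^T A_g = I_F$ and $|{\Det}_F(A_g)|=1$. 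Consequently the argument of $a_F$ is $I_F$, $a_F(I_F) = 0$, and $f_F|_{H_1} \equiv 1$, a trivial (hence valid) homomorphism.

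Finally I would argue that $H_1$ meets every component of $Sp(V,\Omega,F)$. The parametrisation $g \leftrightarrow (A_g, S_g) \in GL(F) \times \operatorname{Sym}(F, G_j)$ is a homeomorphism, so the number of connected components of $Sp(V,\Omega,F)$ equals that of $GL(F)$, namely two, distinguished by $\operatorname{sign}\,\Det_F(A_g)$; but $O(F, G_j) \subset H_1$ already has representatives with both signs of determinant, so $H_1$ meets both components. All hypotheses of Lemma \ref{lem:non-connlifts} being satisfied, the assignment $g \mapsto (U_{g,f_F(g)}, g)$ is a Lie group homomorphism $Sp(V,\Omega,F) \to \Mpc(V,\Omega,j)$ with $\sigma \circ F_F = \Id$, which is the conclusion. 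The only potentially delicate point is the cocycle check, but it is precisely what Lemma \ref{lem:non-connlifts} was designed to bypass via the $H_1$-connectivity trick, so there is no genuine obstacle beyond the bookkeeping already done.
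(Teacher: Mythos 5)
Your proof is correct and follows essentially the same route as the paper: you use the same block-triangular parametrisation of $Sp(V,\Omega,F)$, the same identification of $\Det_j C_g$ via $F^{\C}$, and the same appeal to Lemma \ref{lem:non-connlifts} with verification of conditions (i) and (ii). The only difference is that you spell out explicitly why $H_1$ meets every connected component (counting components of $GL(F)$ via the parametrisation $(A_g,S_g)$), whereas the paper merely asserts it; this is a useful clarification but not a different argument.
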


%%%%%%%%%
\subsubsection{The stabiliser of a complex Lagrangian subspace}

A subspace $F$ of $V^{\C}$ which is Lagrangian for the complex bilinear 
extension of $\Omega$, in an abuse of terminology, is called a complex
Lagrangian subspace of $(V,\Omega)$. We denote the set of complex
Lagrangian subspaces by $\Lambda(V^{\C},\Omega)$. 
We consider the group
\[
Sp(V,\Omega,F) = \{ g\in Sp(V,\Omega) \suchthat g(F) \subset F\}
\]
where $g$ has been extended complex-linearly to act on $V^\C$.
We shall prove that this subgroup is liftable to $\Mpc$.

\begin{remark} If $\overline{F}=F$, then $F=L^\C$ where $
L$ is a Lagrangian subspace of $V$. Then $Sp(V,\Omega,F) 
= Sp(V,\Omega,L)$ is liftable by Proposition \ref{prop:realLgngn}.

If $F\cap \overline{F}  =\{0\}$, then $V^\C=F\oplus \overline{F}$ and
there is a complex linear map $\jtilde:V^\C\rightarrow V^\C$ so
that $\jtilde\vert_{F}=i\,\Id_{F}$ and
$\jtilde\vert_{\overline{F}}=-i\,\Id_{\overline{F}}$. The map
$\jtilde$ is the complex extension of a complex structure
$\jtilde$ on $V$ which is compatible with $\Omega$. The  complex
extension of $g\in Sp(V,\Omega)$ maps $F$ into $F$ if and only if
$g\circ\jtilde=\jtilde\circ g$. Hence
$Sp(V,\Omega,F)=U(V,\Omega, \jtilde)$ and this subgroup  is liftable by
proposition \ref{lemma:split}.
\end{remark}

For a general complex Lagrangian $F$, $F\cap \overline{F} =D^\C$ where
$D=F\cap V$ is an isotropic subspace of $V$. The symplectic orthogonal
$D^\perp$ of $D$ is  given by $(D^\perp)^\C = F + \overline{F}$. The
quotient  space $V':=D^\perp/D$ is naturally endowed with a symplectic
structure $\Omega'$ induced by $\Omega$ and $F':=F/D^\C$ is a complex
Lagrangian of $(V',\Omega')$. We have $
V^{'\C} =F'\oplus  \overline{F'}
$
so there is a complex structure $\tilde{\jmath'}$ on $V'$ whose complex
linear extension is the multiplication by $i$ on $F'$.

Conversely given a real isotropic subspace $D$ of $V$ and a complex
structure $\tilde{\jmath'}$ on $D^\perp/D$ which is compatible with the
symplectic structure induced by $\Omega$, define $F:=p^{-1}F'$ where
$F'\subset (D^\perp/D)^\C$ is the $+i$ eigenspace of the complex
extension of $\tilde{\jmath'}$ and where $p:(D^\perp)^\C\rightarrow
(D^\perp/D)^\C$ is the canonical projection. Then $F$ is a complex
Lagrangian of $(V,\Omega)$. Hence

\begin{lemma}
A complex Lagrangian $F$ of $(V,\Omega)$ determines a pair
$(D,\jtilde)$ consisting of a real isotropic subspace $D$ of $V$
and of a complex structure $\tilde{\jmath '}$ on $D^\perp/D$ which is
compatible with the symplectic structure induced by $\Omega$, and vice
versa.   
\end{lemma}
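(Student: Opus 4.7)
The plan is to verify both halves of the correspondence outlined in the paragraphs preceding the lemma statement and then to check that the two assignments are mutually inverse. Almost all of the content reduces to a handful of linear-algebraic identities involving the symplectic orthogonal and its interaction with complex conjugation on $V^{\mathbb{C}}$.

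For the forward direction, given a complex Lagrangian $F \subset V^{\mathbb{C}}$, I set $D := F \cap V$. This $D$ is automatically real isotropic because $\Omega|_D$ is the restriction of the complex-bilinear extension of $\Omega$ to $F$, on which it vanishes. Next I verify $F \cap \overline{F} = D^{\mathbb{C}}$: one inclusion is immediate, and for the other, if $v \in F \cap \overline{F}$ then also $\overline{v}\in F \cap \overline{F}$, so the real and imaginary parts of $v$ lie in $F \cap V = D$. I then verify $(D^\perp)^{\mathbb{C}} = F + \overline{F}$ by computing
\[
(F + \overline{F})^\perp = F^\perp \cap \overline{F}^\perp = F \cap \overline{F} = D^{\mathbb{C}},
\]
where I used that $F$ is Lagrangian (so $F^\perp = F$) and that complex conjugation commutes with taking symplectic orthogonals; taking $\perp$ once more (using non-degeneracy of the complex-bilinear extension) and $(D^{\mathbb{C}})^\perp = (D^\perp)^{\mathbb{C}}$ yields the claimed identity. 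Symplectic reduction then produces $(V',\Omega')$ with $V' = D^\perp/D$, and $F' := F/D^{\mathbb{C}}$ is a complex Lagrangian in $V'^{\mathbb{C}}$ with $F' \cap \overline{F'} = 0$, which, as already observed in the excerpt's second remark, yields a compatible complex structure $\jtilde'$ on $V'$.

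For the backward direction, I start from a pair $(D,\jtilde')$ and set $F := p^{-1}(F')$ where $F' \subset V'^{\mathbb{C}}$ is the $+i$-eigenspace of $\jtilde'$. A dimension count gives $\dim_{\mathbb{C}} F = \dim_{\mathbb{C}} F' + \dim_{\mathbb{C}} D^{\mathbb{C}} = (n - \dim_{\mathbb{R}} D) + \dim_{\mathbb{R}} D = n$, so $F$ has the right size. Isotropy of $F$ follows by writing two arbitrary elements of $F$ as sums of a piece in $D^{\mathbb{C}}$ and a representative in $(D^\perp)^{\mathbb{C}}$: the mixed terms vanish because $\Omega(D, D^\perp) = 0$, the $D^{\mathbb{C}} \times D^{\mathbb{C}}$ term vanishes because $D$ is isotropic, and the remaining term equals $\Omega'$ applied to two elements of $F'$, hence vanishes since $\jtilde'$-compatibility makes $F'$ complex Lagrangian in $(V', \Omega')$.

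Finally I check that the two constructions are mutually inverse. Starting from $F$, forming $(D,\jtilde')$ as above and then reconstructing gives $p^{-1}(F/D^{\mathbb{C}}) = F$ tautologically. Starting from $(D,\jtilde')$, I need to recover both $D$ and $\jtilde'$ from $F = p^{-1}(F')$. If $v \in F \cap V$, then $p(v)$ is simultaneously real and in $F'$; but $F' \cap \overline{F'} = 0$ forces $p(v) = 0$, so $v \in \ker p = D$, giving $F \cap V = D$; and the $+i$-eigenspace of the induced complex structure on $D^\perp/D$ is then $F/D^{\mathbb{C}} = F'$, so the reconstructed complex structure is again $\jtilde'$. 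I do not anticipate any serious obstacle, the only points requiring care being not to confuse the symplectic orthogonal taken in $V$ with the one taken in $V^{\mathbb{C}}$ and to keep track of the canonical identification $(D^\perp)^{\mathbb{C}}/D^{\mathbb{C}} = (D^\perp/D)^{\mathbb{C}}$ used implicitly in the reduction step.
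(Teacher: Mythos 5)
Your proof is correct and follows essentially the same route as the paper, which presents the two constructions in the paragraphs immediately preceding the lemma and concludes with ``Hence''; you simply supply the verifications (the identities $F\cap\ol{F}=D^{\C}$, $(D^\perp)^{\C}=F+\ol{F}$, the dimension count, isotropy of $p^{-1}(F')$, and mutual inverseness) that the paper leaves implicit. Your checks are all sound, and the one caution you flag yourself---keeping the orthogonal in $V$ distinct from the one in $V^{\C}$ and tracking the identification $(D^\perp)^{\C}/D^{\C}\simeq(D^\perp/D)^{\C}$---is exactly the right place to be careful.
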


In particular, an element $g\in Sp(V,\Omega)$ is in $g\in
Sp(V,\Omega,F)$ if and only if $g(D)\subset D$ and $g'\circ
\tilde{\jmath '}=\tilde{\jmath '}\circ g'$ where $g'$ is the linear
endomorphism of $D^\perp/D$ induced by $g$.

We first examine
the stabiliser of an isotropic subspace.
Let $D \subset V$ be isotropic then, as above,  $D$ is the
kernel of the restriction of $\Omega$ to $D^{\perp}$ and so $\Omega$
induces a non-degenerate bilinear form $\Omega '$ on the quotient
space $D^{\perp}/D$ making $(D^{\perp}/D, \Omega')$ a symplectic
vector space called the \textit{symplectic quotient} of $(V,\Omega)$ by
the isotropic subspace $D$.

Let $j \in j_+(V,\Omega)$ and $D$ be isotropic then $jD$ is
$G_j$-orthogonal to $D^{\perp}$ and so $V = D^{\perp} + jD$ is a direct
sum. $jD$ is also isotropic and $D^{\perp} \cap (jD)^{\perp} =
(D+jD)^{\perp}$ is a $j$-stable symplectic subspace with
$((D+jD)^{\perp},\Omega)$ symplectically isomorphic to $(D^{\perp}/D,
\Omega ')$. Thus $j$ on $(D+jD)^{\perp}$ induces $\jmath ' \in
j_+(D^{\perp}/D, \Omega')$.

Denote by $Sp(V,\Omega,D)$ the subgroup of $Sp(V,\Omega)$ of elements
which stabilise $D$. If $g \in Sp(V,\Omega,D)$ then $g$ preserves $D$
and $D^{\perp}$ and so induces a transformation $g'$ of
$D^{\perp}/D$ which clearly lies in $Sp(D^{\perp}/D, \Omega ')$. We
thus have two homomorphisms
\[
a : Sp(V,\Omega,D) \to GL(D)\quad \mathrm{and}\quad 
b : Sp(V,\Omega,D) \to Sp(D^{\perp}/D,\Omega').
\]

Fix $j \in j_+(V,\Omega)$ and let $Q= (D + jD)^{\perp}$ then 
$V = D + jD + Q$ is a $G_j$-orthogonal direct sum and $(Q,\Omega)$
is symplectically isomorphic to $(D^{\perp}/D, \Omega ')$.
We write $g$ in terms of the corresponding transformation $\ol{g}$
of $\ol{V} = D\oplus D \oplus Q$ and the map
\[
\ol{V} \to V , \quad\left(\begin{array}{c}u\\
v\\w\end{array}\right) \mapsto
u+jv+w.
\]
Then using the fact that $g$ is symplectic as well as preserving $D$
and $D^{\perp}$ but not necessarily $jD$, we have
\[
\ol{g} = \left(
\begin{array}{ccc}
a(g)&a(g)\left(s(g) -\half e(g)^*e(g)\right)& -a(g) e(g)^*\\
0&(a(g)^T)^{-1}&0\\
0&b(g)e(g)& b(g)
\end{array}
\right).
\]
Here $x^T$ is the transpose of $x \in \End(D)$ relative to 
the inner product $G_j$, $s(g)$ is a symmetric endomorphism of $D$, 
$e(g)$ is a linear map from $D$ to $Q$ and $e(g)^*$ is the transposed
linear map relative to $G_j$ and $\Omega\vert_{Q\times Q}$ from
$Q$ to $D$ such that
\[
G_j(e(g)^*w,v) = \Omega(w,e(g)v).
\]
If we transfer $j$ from $V$ to $\ol{V}$ as $\jbar$ then it has 
block matrix form
\[
\jbar = \left(
\begin{array}{ccc}
0& -I_D & 0 \\
I_D &0 &0 \\
0 &0 &\jmath '
\end{array}
\right).
\]
Finally, if $\ol{C_g}=\frac{1}{2}(\ol{g}-\ol{j}\ol{g}\ol{j})$ be $C_g$
transported to $\ol{V}$ then
\[
\ol{C_g} = 
\left(
\begin{array}{ccc}
\half\left(a(g) + (a(g)^T)^{-1}\right)
 & \half a(g)\left(s(g) -\half e(g)^*e(g)\right) &  -\half a(g) e(g)^* \\
-\half a(g)\left(s(g) -\half e(g)^*e(g)\right)      
 &\half\left(a(g) +(a(g)^T)^{-1}\right)
  &  \half a(g)e(g)^*\jmath '\\
-\half\jmath ' b(g)e(g)
 &\half b(g)e(g)
  &      C_{b(g)}' 
\end{array}
\right).
\]
Here $ C_{b(g)}' $ arises from $\jmath '$ on $D^{\perp}/D$.

Clearly, $\Det_j (C_g) = \Det_{\jbar}(\ol{C_g})$ and to compute the latter
we write $D\oplus D$ as $D^{\C}$ and replace $Q$ by $Q^+$ the $+i$ eigenspace
of $\jmath '$. On $D^{\C} \oplus Q^+$, $\ol{C_g}$ becomes a $2\times2$ complex
block matrix
\begin{eqnarray*}
&&\left(
\begin{array}{cc}
\half\left(a(g)+(a(g)^T)^{-1} -ia(g)\left(s(g) -\half e(g)^*e(g)\right) \right)    
 &    -a(g)e(g)^*\\
-\frac{i}4 (1-i\jmath ')b(g)e(g)
 &     C_{b(g)}'
\end{array}
\right)\\
&&\qquad\mbox{}=\left(\begin{array}{cc}
a(g)     &     0  \\
 0         &   C_{b(g)}' \end{array}\right)
\left(
\begin{array}{cc}
\half\left(I_D+(a(g)^Ta(g))^{-1} -is(g) +\half ie(g)^*e(g) \right) 
 &  -e(g)^* \\
-\frac{i}4 (1-i\jmath ')C_{b(g)}'^{-1}b(g)e(g)&   I_{Q^+}
\end{array}
\right)
\end{eqnarray*}
and hence
\begin{eqnarray*}
\Det_j(C_g) &=& \Det_D(a(g)) \Det_{\jmath '}(C_{b(g)}') \times\\
&&\kern-.75in\Det_{D^{\C}}\left(\half\left(I_D+(a(g)^Ta(g))^{-1} 
-is(g) +\half ie(g)^*e(g) \right)
-\frac{i}4 e(g)^*(1-i\jmath ')C_{b(g)}'^{-1}b(g)e(g)\right).
\end{eqnarray*}
Now $C_{b(g)}'^{-1}b(g) = I+Z_{b(g)}'$ so that the argument of
the last determinant becomes
\begin{eqnarray*}
\half\left(I_D+(a(g)^Ta(g))^{-1} -is(g)\right) + \frac{i}4 e(g)^*e(g)
-\frac{i}4 e(g)^*(1-i\jmath ')C_{b(g)}'^{-1}b(g)e(g) &=&\\
&&\kern-5.5in \half\left(I_D+(a(g)^Ta(g))^{-1} -is(g)\right)
- \quarter e(g)^*\jmath ' e(g) - \quarter ie(g)^*Z_{b(g)}'e(g)
-\quarter e(g)^*\jmath 'Z'_{b(g)}e(g).
\end{eqnarray*}
One can easily check that each of the last three terms is symmetric so
when one takes real parts as linear operators on a complex Hermitean
vector space what remains is
\begin{equation}\label{eq:isotropic1}
\half\left(I_D+(a(g)^Ta(g))^{-1} \right)
- \quarter e(g)^*\jmath ' e(g) 
-\quarter e(g)^*\jmath 'Z'_{b(g)}e(g).
\end{equation}
Now
\[
-G_{\jmath '} (e(g)^*\jmath ' e(g)u,u) 
= -\Omega'(\jmath ' e(g)u,e(g)u) 
= G_{\jmath '}( e(g)u,e(g)u)
\]
and 
\[
-G_j(e(g)^*\jmath 'Z'_{b(g)}e(g)u,u) 
= -\Omega '(\jmath ' Z'_{b(g)}e(g)u,e(g)u)
= G_{\jmath '}( e(g)u,Z'_{b(g)}e(g)u).
\]
Then
\begin{eqnarray*}
G_{\jmath '}( e(g)u,e(g)u) + G_{\jmath '}( e(g)u,Z'_{b(g)}e(g)u) 
&=&   \\
&  & \kern-2.5in\half G_{\jmath '}( e(g)u,(1-(Z'_{b(g)})^2)e(g)u)
+\half G_{\jmath '}((I+Z'_{b(g)}) e(g)u,(I+Z'_{b(g)})e(g)u)
\end{eqnarray*}
which is non-negative, and hence the last two terms in
(\ref{eq:isotropic1}) define a positive operator. Since the first term
is positive definite the argument of the determinant is in
$GL(D^{\C})_+$ so there is a smooth square root of the determinant, say
$\delta_D(g)$ with $\delta_D(I_V) = 1$:
\begin{equation}\label{eq:isotropic2}
\Det_j(C_g) = \Det_D(a(g)) \Det_{\jmath '}(C'_{b(g)})\, \delta_D(g)^2.
\end{equation}
This shows that a subgroup of $Sp(V,\Omega,D)$ will have a lift to $\Mpc$ 
precisely when its image in the symplectic group, 
$Sp(D^{\perp}/D,\Omega ')$,  of the reduced space has a lift.

%%%%%%%%%%%%%%%%%%%%%%%%%%%% Complex Lagrangian
Let $F \in \Lambda(V^{\C},\Omega)$ be complex Lagrangian with $D=F\cap V$
as above. We shall assume that the dimension of $D$ is neither
$0$ nor $n$ so $F$ is neither real nor pseudo-Hermitean (since these are cases we already
dealt with in Propositions \ref{lemma:split}, \ref{prop:realLgngn}), but a mixture
of both. Then $F \subset (D^{\perp})^{\C}$ so projects to a subspace 
$F/D^{\C}$ of $(D^{\perp}/D)^{\C}$ which is clearly a complex
Lagrangian subspace of the symplectic quotient. Now, however, $F/D^{\C}$
has no real part, so there is a real endomorphism $\tilde{\jmath '}$ of 
$D^{\perp}/D$ whose complexification has $+i$ eigenspace given by
$F/D^{\C}$. The image of $Sp(V,\Omega,F)$ in $Sp(D^{\perp}/D,\Omega')$
will then be a pseudo-unitary group $U(D^{\perp}/D,\Omega',\tilde{\jmath '})$. 

We fix $j\in j_+(V,\Omega)$ so that $\jmath'$ and $\tilde{\jmath '}$
commute. This can always be done since we can choose a positive
$\jmath'$ on $D^{\perp}/D$ commuting with $\tilde{\jmath'}$, pick any
$j_1\in j_+(V,\Omega)$ lift $\jmath'$ to $D^{\perp} \cap
(j_1D)^{\perp}$ by the isomorphism and extend by $j_1$ on $D+j_1D$.

If $g \in Sp(V,\Omega,F)$ then $g \in Sp(V,\Omega,D)$ and so we have
$b(g) \in U(D^{\perp}/D,\Omega',\tilde{\jmath'})$ and 
\[
\Det_{\jmath'}(C'_{b(g)}) = \Det_{\tilde{\jmath'}}(b(g)) 
\Det_{\jmath'}(C'^-_{b(g)})^2.
\]
Combining this with equation (\ref{eq:isotropic2}) we have
\begin{eqnarray*}
\Det_j(C_g) 
&=& \Det_D(a(g)) \Det_{\tilde{\jmath'}}(b(g))
 \Det_{\jmath'}(C'^-_{b(g)})^2\, \delta_D(g)^2\\
&=& \frac{\Det_D(a(g))}{|\Det_D(a(g))|} 
 \Det_{\tilde{\jmath'}}(b(g))\left[ |\Det_D(a(g))|^{\half} 
  \Det_{\jmath'}(C'^-_{b(g)})\, \delta_D(g)\right]^2
\end{eqnarray*}
If we set
\begin{equation}\label{eq:CLgngnlift}
f_F(g) 
= \left[ |\Det_D(a(g))|^{\half} \Det_{\jmath'}(C'^-_{b(g)})\, 
\delta(g)\right]^{-1}
\end{equation}
we claim all the conditions of Lemma \ref{lem:non-connlifts} are
satisfied with $H=Sp(V,\Omega,F)$.

Firstly, when $g \in H_1$, $e(g)=0$, $s(g)=0$, $a(g) \in O(D)$,  
and $b(g) \in U(D^{\perp}/D,\Omega',\jmath')$ so $H_1$
meets all components of $H$ and $\delta_D(g) = 1$, and so
$f_F$ is a homomorphism. 

Secondly, for $g \in H$
\[
f_F(g)^2 \Det_j(C_g) = \frac{\Det_D(a(g))}{|\Det_D(a(g))|} 
 \Det_{\tilde{\jmath'}}(b(g))
\]
is a homomorphism into $U(1)$ since $b(g)$ is pseudo-unitary.
Thus we have proved:

\begin{proposition}\label{prop:liftLagr}
Let $F \in \Lambda(V^{\C},\Omega)$ be a complex Lagrangian subspace,
put $D=F \cap V$, $\tilde{\jmath'}$ the compatible complex structure
induced on $(D^{\perp}/D, \Omega')$ and $j \in j_+(V,\Omega)$ be such
that the induced $\jmath'$ on  $(D^{\perp}/D, \Omega')$ commutes with
$\tilde{\jmath'}$. Let $f_F$ be defined as in (\ref{eq:CLgngnlift}) then
for $g \in Sp(V,\Omega,F)$, $g, f_F(g)$ are the parameters 
of an element $F_F(g)$ of $\Mpc(V,\Omega,j)$ and 
$F_F$ is a lift of $Sp(V,\Omega,F)$ into $\Mpc(V,\Omega,j)$.
\end{proposition}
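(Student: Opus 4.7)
The plan is to apply Lemma \ref{lem:non-connlifts} to the subgroup $H = Sp(V,\Omega,F)$ with the candidate function $f_F$ defined in (\ref{eq:CLgngnlift}). All of the analytic work has already been assembled in the paragraphs leading up to the statement: an element $g \in H$ preserves $D = F \cap V$ and induces $b(g) \in U(D^{\perp}/D, \Omega', \tilde{\jmath '})$ on the symplectic quotient; the transported block form of $\overline{g}$ yields the factorisation (\ref{eq:isotropic2}); and the smooth square root $\delta_D(g)$ exists on $H$ because the argument of the determinant lies in the contractible set $GL(D^{\C})_+$, by the positivity computation culminating in (\ref{eq:isotropic1}).

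To verify hypothesis (ii) of the lemma, I would combine (\ref{eq:isotropic2}) with the pseudo-unitary identity
\[
\Det_{\jmath '}(C'_{b(g)}) = \Det_{\tilde{\jmath '}}(b(g))\, \Det_{\jmath '}(C'^{-}_{b(g)})^2,
\]
which follows from (\ref{DetjCg}) applied to $b(g) \in U(D^{\perp}/D,\Omega', \tilde{\jmath '})$ with $\jmath '$ and $\tilde{\jmath '}$ in the roles of $j$ and $\jtilde$. Substituting into (\ref{eq:CLgngnlift}) gives
\[
f_F(g)^2 \Det_j C_g = \frac{\Det_D(a(g))}{|\Det_D(a(g))|}\, \Det_{\tilde{\jmath '}}(b(g)),
\]
which is manifestly $U(1)$-valued, and is a homomorphism since $a : Sp(V,\Omega,F) \to GL(D)$ and $b : Sp(V,\Omega,F) \to U(D^{\perp}/D, \Omega', \tilde{\jmath '})$ are group homomorphisms (read off from the block form of $\overline{g}$), while $x \mapsto x/|x|$ and $\Det_{\tilde{\jmath '}}$ are multiplicative characters.

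For hypothesis (i) and the component condition, I would check that if $g \in H_1 = H \cap U(V,\Omega,j)$ then $g$ commutes with $j$ and so preserves each of $D$, $jD$ and $Q = (D+jD)^{\perp}$; inspecting the block form of $\overline{g}$ then forces $e(g) = 0$, $s(g) = 0$, $a(g) \in O(D, G_j)$ and $b(g) \in U(D^{\perp}/D, \Omega', \jmath ')$. Consequently $\delta_D(g) = 1$, $C'^{-}_{b(g)}$ reduces to the honest restriction $b(g)\vert_{V'_-}$, and $f_F\vert_{H_1}$ is visibly multiplicative. That $H_1$ meets every connected component of $H$ follows by a polar decomposition argument inside $H$. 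The main obstacle is really bookkeeping — carefully tracking how the isotropic-stabiliser data $(a(g), s(g), e(g), b(g))$ interact with the pseudo-unitary decomposition of $b(g)$ — but the positivity and square-root steps needed to make this bookkeeping well-defined are already established in the text, so the conclusion follows from Lemma \ref{lem:non-connlifts}.
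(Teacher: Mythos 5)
Your proposal is correct and follows essentially the same route as the paper: applying Lemma \ref{lem:non-connlifts} to $H = Sp(V,\Omega,F)$ with the function $f_F$ of (\ref{eq:CLgngnlift}), verifying (ii) by combining (\ref{eq:isotropic2}) with the pseudo-unitary determinant identity from Proposition \ref{lemma:split} to obtain $f_F(g)^2 \Det_j C_g = \frac{\Det_D(a(g))}{|\Det_D(a(g))|}\Det_{\tilde{\jmath'}}(b(g))$, and verifying (i) and the component condition by observing that on $H_1 = H \cap U(V,\Omega,j)$ one has $e(g)=0$, $s(g)=0$, $a(g) \in O(D,G_j)$, $b(g) \in U(D^\perp/D,\Omega',\jmath')$, hence $\delta_D(g)=1$ and $f_F|_{H_1}$ multiplicative. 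The only point where you are slightly more explicit than the paper is invoking a polar decomposition inside $H$ to justify that $H_1$ meets every component, a step the paper leaves implicit.
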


%%%%%%%%%%%%%%%%%%%%%%%
%%%%%%%%%%%%%%%%%%%%%%%
\section{Invariant $\Mpc$-structures on  homogeneous spaces}

\subsection{$\Mpc$-structures}

Let $(M,\omega)$ be a symplectic manifold. Consider  its symplectic
frame bundle $Sp(M,\omega)$ whose fibre at $x\in M$ consists of all
symplectic isomorphisms $b: (V,\Omega)\rightarrow (T_xM, \omega_x)$.

\begin{definition}
An {\emph{$Mp^c$-structure}} on a symplectic manifold $(M,\omega)$ is  a pair
$(P,\phi)$ of a principal $Mp^c(V,\Omega,j)$  bundle $P\stackrel{\pi}{\rightarrow}
M$  with a fibre-preserving map  $\phi: P\rightarrow Sp(M,\omega)$
such that for all $U\in Mp^c(V,\Omega,j)$ and for all $p\in P$:
\[
\phi(p.U)=\phi(p).\sigma(U).
\]
\end{definition}
\begin{notation}
If $M$ and $N$ are two manifolds and if $G$ is a Lie group acting on the right on $M$
and acting on the left on $N$, then we denote by
\[
M\times_G N
\]
the manifold whose points are equivalence classes in $M\times N$
for the equivalence defined by  the actions of $G$:
\[
(x,y)\sim(x\cdot g, g^{-1}\cdot y),Ê\qquad \forall x\in M, y\in N, g\in G.
\]
In particular, when $N=H$ is a Lie group and when $\mu: G\rightarrow H$ is a 
Lie group homomorphism, we consider the left action of $G$ on $H$ defined by
$g\cdot h:=\mu(g)h$ and the corresponding $M\times_G H$ is denoted
\[
M\times_{G,\mu} H;
\]
the equivalence class of $(x,h)$ is denoted by $[x,h]$ with $x\in M,
h\in H$ so that $[x,h]=[x\cdot g, \mu(g^{-1})h]$.
\end{notation}
Since there is a character $\eta$ defined on the group $Mp^c(V,\Omega,j)$, one can define 
a complex line bundle (with a natural Hermitean structure) associated to a $\Mpc$-structure
\begin{equation}
P(\eta):=P\times_{Mp^c(V,\Omega,j),\eta}\C.
\end{equation}

 \begin{remark} 
Every symplectic manifold  $(M,\omega)$ admits an $\Mpc$-structure, and the
isomorphism classes of $\Mpc$-structures  are parametrised by equivalence classes of complex line
bundles with Hermitean structure over $M$ \cite{refs:RobRaw}. 
We briefly recall how to establish these facts.
One chooses a positive compatible almost complex
structure $J$ on $(M,\omega)$; this is always possible as the bundle of
fibrewise positive $\omega$-compatible complex structures has
contractible fibres. Choosing such  a  positive $J$, those symplectic
frames which are also complex linear form a principal $U(V,\Omega,j)$-bundle 
called the unitary frame bundle which we denote by
$U(M,\omega,J)$. 

Let   $L$ be any  complex line bundle over $M$ endowed with a Hermitean
structure $h$; let  $L^{(1)}=\{\, u\in L\,\vert\, h(u,u)=1\,\}$ be  the associated $U(1)$-bundle.
Define 
\[
P(L,J):=\left(U(M,\omega,J)\times_M
L^{(1)}\right)\times_{\MUc(V,\Omega,j)}\Mpc(V,\Omega,j)
\]
with the right action of $\MUc(V,\Omega,j)$ on the right-hand side given
via $ \sigma\times\lambda $ by the right action of the group
$U(V,\Omega,j)$ on  $U(M,\omega,J)$  and of $U(1)$ on $L^{(1)}$. Define   
\[
\phi(L,J) : P(L,J) \rightarrow Sp(M,\omega) :  \left[ (b,s),A \right]=b\cdot \sigma(A).
\]
Then   $(P(L,J),\phi(L,J))$ is a $\Mpc$-structure on $(M,\omega)$.

Conversely, if $(P,\phi)$ is any $\Mpc$-structure on $(M,\omega)$, we  define the subset
$P_J$ of $P$ lying over the unitary frames
\[
P_J:=\phi^{-1}(U(M,\omega,J)).
\]
This will be a principal $\MUc(V,\Omega,j)\simeq_{\sigma\times\lambda}
U(V,\Omega,j) \times U(1)$ bundle.

The complex line bundle associated to $P_J$ by the character $\lambda$
is denoted  by $P_J(\lambda)$
\begin{equation}
P_J(\lambda):=P_J\times_{\MUc(V,\Omega,j),\lambda}\C;
\end{equation}
 it carries a natural Hermitean structure. Remark that the line bundle  
 $P_J(\lambda)$ associated to $(P(L,J),\phi(L,J))$ is $L$.

Now  $(P,\phi)$ is completely determined by $(P_J,\phi\vert_{P_J})$ via
$P\simeq P_J\times_{\MUc(V,\Omega,j)}\Mpc(V,\Omega,j)$ and $ \phi [p,A]=\phi(p)\cdot \sigma(A)$.
The map $\tilde\lambda: P_J\rightarrow P^{(1)}_J(\lambda)
: \xi\mapsto [\xi,1]$ allows to write an isomorphism
\[
{\phi\times \tilde\lambda}: P_J\rightarrow U(M,\omega,J)\times_M
P^{(1)}_J(\lambda) : \xi \mapsto \left( \phi(\xi),[\xi,1] \right).
\]
Hence $(P,\phi)$ is isomorphic to $(P(L,J),\phi(L,J))$ for $L=P_J(\lambda)$.

The isomorphism class of the line bundle $P_J(\lambda)$ is independent
of the choice of $J$. This class is called {\emph{the class  of the
$\Mpc$-structure}} $(P,\phi)$.

Remark that the relation (\ref{mpc:chars}) between the characters  gives
\begin{equation} \label{linebundles}
P(\eta)=(P_J(\lambda))^{\otimes 2}\otimes \Lambda^n(T^{1,0}_JM)
\end{equation}
where
$\Lambda^n(T^{1,0}_JM)=U(M,\omega,J)\times_{U(V,\Omega,j),\det_j}\C$ is
the line bundle whose class is the first Chern class of the symplectic
structure.
\end{remark}

The above shows that we have an $\Mpc$-structure $(P,\phi)$ for which the class is zero, i.e. the line
bundle $P_J(\lambda)$ is trivial, and this $\Mpc$-structure is unique up to isomorphism.
That is, unlike metaplectic structures where there is no base-point, we
have an $\Mpc$-structure from which the others can be obtained by
twisting.
\begin{definition}
A {\emph{basic}} $\Mpc$-structure  on a symplectic manifold is an
$\Mpc$-structure $(P,\phi)$ whose class  is zero.
Up to isomorphism, it is unique and can be constructed, using a positive
compatible  almost complex structure $J$ on $(M,\omega)$, as 
\begin{equation}
P_{basic,J}:=U(M,\omega,J)\times_{U(V,\Omega,j),F_j}\Mpc(V,\Omega,j)
\end{equation}
where $F_j$ is the embedding of $U(V,\Omega,j)$ into $\Mpc(V,\Omega,j)$
as in  formula \ref{UinMpc}, and $\phi$ is the natural projection
\[
\phi : P_{basic,J} \rightarrow  Sp(M,\omega): [b,A]   \to    b\cdot \sigma(A).
\]
\end{definition}
\begin{definition}
A {\emph{metaplectic}} structure on a symplectic manifold $(M,\omega)$ is
 a pair $(B,\psi)$ of a principal $Mp(V,\Omega,j)$-bundle
$B\stackrel{\pi}{\rightarrow} M$  with a fibre-preserving map  $\psi:
B\rightarrow Sp(M,\omega)$ such that for all $U\in Mp(V,\Omega,j) $ and
for all $p\in B$:
\[
\psi(p.U)=\psi(p).\sigma(U).
\]
\end{definition}
\begin{lemma}
There exists a metaplectic structure on $(M,\omega)$ if and only if  the
canonical line bundle $\Lambda^nT^{1,0}_JM$ admits a square root.
\end{lemma}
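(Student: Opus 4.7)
My plan is to characterise metaplectic structures as $\Mpc$-structures $(P,\phi)$ for which the associated line bundle $P(\eta)$ is trivial (as a Hermitean line bundle), and then use the identity (\ref{linebundles}) to translate this into the existence of a square root of $\Lambda^n T^{1,0}_J M$.

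More precisely, first I would record the short exact sequence
\[
1 \map Mp(V,\Omega,j) \map \Mpc(V,\Omega,j) \map[\eta] U(1) \map 1,
\]
which is immediate from Definition \ref{def:metaplectic}. From it I would deduce that for any $\Mpc$-structure $(P,\phi)$ the quotient $P/Mp(V,\Omega,j)$ is canonically the unit circle bundle of the Hermitean line bundle $P(\eta)$; consequently, reductions of $P$ to structure group $Mp(V,\Omega,j)$ are in bijection with trivialising sections of $P(\eta)$.

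For the forward implication I would assume a metaplectic structure $(B,\psi)$ and form the associated $\Mpc$-structure
\[
P := B \times_{Mp(V,\Omega,j)} \Mpc(V,\Omega,j), \qquad
\phi([b,U]) := \psi(b)\cdot\sigma(U).
\]
Since $\eta$ is trivial on $Mp(V,\Omega,j)$, the line bundle $P(\eta)$ is trivial; by (\ref{linebundles}) this gives
\[
(P_J(\lambda))^{\otimes 2}\otimes \Lambda^n(T^{1,0}_JM) \cong \underline{\C},
\]
so $(P_J(\lambda))^{\otimes -1}$ is a square root of $\Lambda^n(T^{1,0}_JM)$ (choosing any $J$). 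For the converse, given a square root $L$ of $\Lambda^n(T^{1,0}_JM)$ I would use the explicit construction of $\Mpc$-structures of prescribed class recalled in the Remark: take $(P,\phi):=(P(L^{-1},J),\phi(L^{-1},J))$. Then $P_J(\lambda)=L^{-1}$ and (\ref{linebundles}) shows $P(\eta)\cong L^{-2}\otimes \Lambda^n(T^{1,0}_JM)$ is trivial as a Hermitean line bundle. Choosing a unit-length trivialising section and invoking the preliminary observation yields a reduction $B\subset P$ of structure group to $Mp(V,\Omega,j)$, and $(B,\phi|_B)$ is the desired metaplectic structure.

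I expect the only delicate point to be the identification $P/Mp(V,\Omega,j)\simeq P(\eta)^{(1)}$ together with the ensuing correspondence between trivialisations and reductions: one must check that Hermitean trivialisations (i.e.\ sections of the unit circle bundle) are exactly what produce principal $Mp(V,\Omega,j)$-subbundles compatible with $\phi$. Apart from this standard but crucial bookkeeping the argument is an algebraic manipulation of characters and associated bundles, so no analytic input is required.
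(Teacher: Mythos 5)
Your proposal is correct and follows essentially the same route as the paper: both directions go through the equivalence ``metaplectic structure exists $\Leftrightarrow$ some $\Mpc$-structure $(P,\phi)$ has trivial $P(\eta)$'', combined with the relation (\ref{linebundles}) to translate triviality of $P(\eta)$ into the existence of a square root of $\Lambda^n T^{1,0}_J M$. The paper's converse is slightly more terse (it simply takes $B=T^{-1}(M\times\{1\})$ for a trivialisation $T$ of $P(\eta)$), whereas you spell out the identification $P/Mp(V,\Omega,j)\simeq P(\eta)^{(1)}$ and the explicit choice of $\Mpc$-structure $P(L^{-1},J)$, but the mathematical content is the same.
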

\begin{proof}
Since $Mp(V,\Omega,j) =\ker\eta= \{(U,g)\in \Mpc(V,\Omega,j) \suchthat
\lambda^2{\Det}_j C_g = 1\}$, any metaplectic structure $(B,\psi)$
yields an $\Mpc$-structure defined by
\[
P=B\times_{Mp(V,\Omega,j)}Mp^c(V,\Omega,j)\qquad 
\phi:P\rightarrow Sp(M,\omega) : [p,A]\to \psi(p)\cdot \sigma(A)
\]
and the line bundle associated to $P$ and the character $\eta$, $P(\eta)$, is
trivial; equivalently, by (\ref{linebundles}), $(P_J(\lambda))^{\otimes
2}\otimes \Lambda^nT^{1,0}_JM$ is trivial so $P_J(\lambda^{-1})$ is a
square root of the canonical bundle.

Reciprocally, if there is an $\Mpc$-structure $(P,\phi)$ such that
$P(\eta)$ is trivial, i.e. if there is a square root of the canonical
bundle, then  there is an associated  metaplectic structure defined as
follows. If $T : P(\eta)\rightarrow M\times \C$ is a trivialisation,
define
\[
B:=T^{-1}(M\times \{ 1 \}), \qquad \psi:=\phi\vert_B.
\] 
\end{proof}

\subsubsection{$Mp^c$-structure associated to a compatible almost 
complex structure}\label{subsubsection:almostcomplex}

Given any  compatible almost complex structure (not necessarily
positive!) ${\Jtilde}$ on $(M,\omega)$, we construct $\Mpc$-structures
in a  similar way.   Symplectic frames which are also complex linear for
$\Jtilde$ form a principal $U(V,\Omega,{\jtilde})$-bundle called the
pseudo-unitary frame bundle which we denote by $U(M,\omega,{\Jtilde})$
where $\jtilde$ is chosen so that $G_{\jtilde}$ has the same signature
as $G_{\Jtilde}$. We define a {\emph{pseudo-basic}} $Mp^c$-structure
associated to ${\Jtilde}$ by:
\begin{equation}
P_{psbasic,\Jtilde}:=U(M,\omega,{\Jtilde})
\times_{U(V,\Omega,{\jtilde}),F_{{\jtilde},j}}\Mpc(V,\Omega,j),
\end{equation}
with $F_{{\jtilde},j}: U(V,\Omega,\jtilde) \rightarrow \Mpc(V,\Omega,j)$
defined as in (\ref{Ftldejj}).  Remark that as before, any other
$Mp^c$-structure is given up to isomorphism by tensoring the above one
with a circle bundle.

Observe that this pseudo-basic bundle is not basic in general. Indeed,
let us choose a positive compatible almost complex structure $J$ on
$(M,\omega)$ which commutes with ${\Jtilde}$. [This is always possible;
indeed, choosing any Riemannian  metric $g_0$ on $M$, setting
$g_1(X,Y)=g_0(X,Y)+g_0({\Jtilde}X,{\Jtilde}Y)$ and defining the field
$A$ of linear endomorphisms by $\omega(X,Y)=g_1(AX,Y)$, then $A$
commutes with ${\Jtilde}$, the  transpose  $A^*$ of $A$ relative to the
metric $g_1$ is equal to $-A$, $AA^*=-A^2$ is symmetric and positive
definite and we can define $J$ using the polar decomposition of $A$ as
$J=(-A^2)^{-\half}A$.]

At each point $x$ the tangent space $T_xM$ splits as a
direct sum $T_xM^+\oplus T_xM^-$ of $-1$ and $+1$ eigenspaces of
$J_x{\Jtilde}_x$, and this is a $\omega_x$-orthogonal splitting, with
each subspace stable by $J_x$ (and ${\Jtilde}_x$ since $
J_x=\pm{\Jtilde}_x=:J_x^\pm$ on $T_xM^\pm$). 
Symplectic frames which are complex linear both for $J$ and for
$\Jtilde$ consist of  pairs of symplectic unitary frames of $T_xM^+$ and
$T_xM^-$. Those form  a principal $U(V_+)\oplus U(V_-)$-bundle which we
denote $U(M,\omega, J, {\tilde{J}})$. The pseudo-basic $Mp^c$-structure
associated to ${\Jtilde}$ has the form:
\[
P_{psbasic,\Jtilde}=U(M,\omega,J,{\Jtilde})
\times_{U(V_+)\oplus U(V_-),F_{{\jtilde} ,j}\circ i}\Mpc(V,\Omega,j),
\]
where $i$ is the natural injection of $U(V_+)\oplus U(V_-)$ into
$U(V=V_+\oplus V_-,\Omega, j)$. If  $g\in U(V_+)\oplus U(V_-)$ then
$i(g)=\left(   \begin{array}{cc}U^+ & 0 \\ 0 & U^-\end{array} \right)$
so that $\Det_j(C_g^-)=\Det_j (U_-)$. Hence the line bundle associated
to the $\Mpc$-structure $P_{psbasic,\Jtilde}$ is
\begin{equation}
P_{psbasic,\Jtilde}^J(\lambda)=U(M,\omega,J,{\Jtilde})
\times_{U(V_+)\oplus U(V_-),\chi} \C
\end{equation}
where $\chi=\lambda\circ F_{{\jtilde} ,j}\circ i$ so that 
$\chi \left( \begin{array}{cc}U^+ & 0 \\ 0 
& U^-\end{array} \right)=\Det_j (U_-)^{-1}$. 
Since $\chi^2(g)=\Det_{\jtilde}g\Det^{-1}_{{j}}g$
\begin{equation}
P_{psbasic,\Jtilde}(\eta)=U(M,\omega,{\Jtilde})
\times_{U(V,\Omega,{\jtilde}),\Det_{\jtilde}} \C.
\end{equation}

%%%%%%%%%%%%%%%%%%%%%%%%%%%%%%%%%%%
\subsubsection{$Mp^c$-structure associated to  bi-Lagrangian  or 
liftable $H$-structures.}
Given a real Lagrangian distribution on a symplectic manifold
$(M,\omega)$ (i.e. a  smooth distribution ${ \mathcal{L}}$, with ${
\mathcal{L}}_x\subset T_xM$ real Lagrangian subspace for each $x\in M$),
we construct $\Mpc$-structures adapted to this situation in a  similar
way.   Symplectic frames whose first elements yield a basis of the
distribution form a principal $Sp(V,\Omega,F)$-bundle which we denote by
$Sp(M,\omega,{ \mathcal{L}})$. We define a {\emph{L-basic}}
$Mp^c$-structure as
\begin{equation}
P_{L-basic}:=Sp(M,\omega,{ \mathcal{L}})
\times_{Sp(V,\Omega,{\jtilde}),L_{F,j}}\Mpc(V,\Omega,j),
\end{equation}
where $L_{F,j}: Sp(V,\Omega,F) \rightarrow \Mpc(V,\Omega,j)$ is the lift
defined in Proposition \ref{prop:realLgngn}.  Remark that  any other
$Mp^c$-structure is given up to isomorphism by tensoring the above one
with a circle bundle.

More generally we get
\begin{definition}
An {\emph{$H$-structure}} on the
symplectic manifold $(M,\omega)$ is the data of
\begin{itemize}
\item a principal $H$-bundle $B\stackrel{\pi^B}{\longrightarrow}M$, and
\item a homomorphism $\tau :H\rightarrow  Sp(V,\Omega)$ so that
\[Sp(M,\omega)\simeq B\times_{H,\tau}Sp(V,\Omega).\]
 \end{itemize}
 It is said to be {\emph{liftable}} if there exists a {\emph{lift}}, that is 
 \begin{itemize}
 \item
 a group  homomorphism ${\widetilde{\tau}}: H\rightarrow \Mpc(V,\Omega,j)$,
 for a choice of PCCS $j$, so that $\sigma\circ{\widetilde{\tau}}=\tau$. 
  \end{itemize}
 Given a liftable $H$-structure on $(M,\omega)$ we define the
 {\emph{$H$-basic}} $Mp^c$-structure as
\begin{equation}
P_{H basic}:=B
\times_{H,{\widetilde{\tau}}}\Mpc(V,\Omega,j), 
\quad \phi_{H basic}:P_{H basic} \rightarrow Sp(M,\omega) 
: [b,A] \mapsto i(b)\sigma(A),
\end{equation}
where 
\begin{equation}\label{eq:defindei}
i: B\rightarrow Sp(M,\omega)=B\times_{H,\tau}Sp(V,\Omega) : b\mapsto [b,1].
\end{equation}
Up to isomorphism, any other $Mp^c$-structure over $(M,\omega)$ is given
by tensoring the above with a circle bundle.
\end{definition}

An example of liftable $H$-structure  is given by the choice on
$(M,\omega)$ of a {\emph{bi-Lagrangian structure}}, i.e.  a field  $A$
of endomorphisms of the tangent bundle so that $A_x\in
\sp(T_xM,\omega_x)$ and  $A_x^2=\id_{T_xM} \, \forall x\in M$. The
$\pm1$ eigenspaces  $A_+$ and $A_-$ of $A$ define supplementary
Lagrangian distributions. Given any basis $\{ e_1,\ldots ,e_n\}$ of
$A_{+x}$, there is a unique basis $\{ f_1,\ldots ,f_n\}$ of $A_{-x}$ so
that $\omega_x(e_j,f_k)=\delta_{jk}$. The bundle of such adapted frames
is a $ GL(F)$ principal bundle $B\rightarrow M$ (where $F$ is a
Lagrangian subspace of $V$). This defines a liftable $ GL(F)$-structure
with 
\begin{equation}\label{tauF}
\tau_F :GL(F)\rightarrow  Sp(V= F\oplus jF ,\Omega) 
: C \mapsto \tau_F(C):= \left(\begin{array}{cc} C&0\\0&(C^T)^{-1}\end{array}\right).
\end{equation}
The lift $\widetilde{\tau_F}: GL(F)\rightarrow \Mpc(V=F\oplus
jF,\Omega,j)$  is given as in Proposition \ref{prop:realLgngn} by 
\begin{equation}\label{eq:liftbilagrangian}
\widetilde{\tau_F}(C)=(U_{\tau_F(C),f_F(C)},\tau(C)) \quad \textrm{with} 
\quad f_F(C)=(\Det_FC)^{-\half}e^{-\half a_F(\half(\id_F+(C^TC)^{-1}))}.
\end{equation}

The case of a field  $A$ of endomorphisms of the tangent bundle so that
$A_x\in \sp(T_xM,\omega_x)$ and  $A_x^2=-\id_{T_xM} \, \forall x\in M$,
is the case of a compatible almost complex structure (not necessarily
positive!) ${\Jtilde}$ on $(M,\omega)$. It corresponds to the liftable
$U(V,\Omega,{\jtilde})$-structure defined by   the pseudo-unitary frame
bundle $U(M,\omega,{\Jtilde})$ and the lift $F_{{\jtilde},j} :
U(V,\Omega,\jtilde) \rightarrow \Mpc(V,\Omega,j)$ of the inclusion map,
as described in section \ref{subsubsection:almostcomplex}.

%%%%%%%%%%%%%%%%%%%%%%%%%%%%%%%%%%%%%%%%
\subsubsection{Action of $U(1)$-principal bundles on $\Mpc$-structures}

The action of $U(1)$-principal bundles on $\Mpc$-structures is made
explicit and canonical (not depending on the choice of an almost complex structure)
in the following two lemmas:

%%%%%%%%%%%%%%%%%%%%%%%%%%%%%%%%
\begin{lemma}\label{actionofLonmpcstr}
Given an  $\Mpc$-structure on $M$, 
($P\stackrel{\pi}{\rightarrow} M$, $\phi: P\rightarrow Sp(M,\omega)$), 
and given a principal $U(1)$-bundle over $M$,
$L^{(1)}\stackrel{\tilde\pi}{\rightarrow} M$, 
one can  define a new $\Mpc$-structure on $M$, 
($P'\stackrel{\pi'}{\rightarrow} M$, $\phi': P'\rightarrow Sp(M,\omega)$)
denoted $L^{(1)}\cdot P$ as follows. One first considers the 
fibrewise product of  $L^{(1)}$ and $P$ over $M$
\[
L^{(1)}\times_M P:=\left\{ \, (s,p) \in L^{(1)}\times P \,\vert\, \tilde\pi(s)=\pi(p) \,\right\}.
\]
It is a principal $ U(1)\times \Mpc(V,\Omega,j)$ bundle over $M$.  One defines
\begin{equation}
P'=\left(L^{(1)}\times_M P\right)
\times_{\left( U(1)\times \Mpc(V,\Omega,j),\tilde\rho\right)}\Mpc(V,\Omega,j)
\end{equation}
for the homomorphism given by the embedding of $U(1)$ in   
$\Mpc(V,\Omega,j)$ and multiplication
\[
\tilde\rho:  U(1)\times  \Mpc(V,\Omega,j)\rightarrow \Mpc(V,\Omega,j)
: (e^{i\theta},A)\mapsto e^{i\theta}A.
\]
The projection $\pi' : P'\rightarrow M$ is defined by 
\[
\pi'([(s,p),A]):=\pi(p) (=\tilde\pi(s)), \qquad (s,p)
\in L^{(1)}\times_M P, A \in \Mpc(V,\Omega,j)
\]
and the  map $\phi': P'\rightarrow Sp(M,\omega)$ is defined by
\[
\phi'([(s,p),A])=\phi(p\cdot A)=\phi(p)\cdot \sigma(A).
\]
\end{lemma}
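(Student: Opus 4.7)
The plan is to verify the three properties that make $(P', \pi', \phi')$ a genuine $\Mpc$-structure on $(M,\omega)$: that $P'$ is a principal $\Mpc(V,\Omega,j)$-bundle over $M$, that $\phi'$ is well-defined on equivalence classes, and that $\phi'$ satisfies the equivariance condition $\phi'(p' \cdot U) = \phi'(p') \cdot \sigma(U)$.

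I would begin by checking that $\tilde\rho$ is a Lie group homomorphism. Because the inclusion $U(1) \hookrightarrow \Mpc(V,\Omega,j)$ from the central extension (\ref{mpc:ext}) lands in the centre, one has $(e^{i\theta_1}A_1)(e^{i\theta_2}A_2) = e^{i(\theta_1+\theta_2)}A_1A_2$ inside $\Mpc(V,\Omega,j)$, which matches the product law on $U(1)\times \Mpc(V,\Omega,j)$. Next, $L^{(1)}\times_M P$ is readily seen to be a principal $U(1)\times \Mpc(V,\Omega,j)$-bundle over $M$ by the usual fibered-product construction on local trivialisations. The standard associated-bundle machinery then guarantees that $P' = (L^{(1)}\times_M P) \times_{U(1)\times \Mpc,\tilde\rho} \Mpc(V,\Omega,j)$ is a principal $\Mpc(V,\Omega,j)$-bundle over $M$, with the right action defined by $[(s,p),A]\cdot U = [(s,p),AU]$.

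The well-definedness of $\pi'$ is immediate, since both $\pi$ and $\tilde\pi$ are invariant under the respective group actions. For the well-definedness of $\phi'$ I would take a general representative of $[(s,p),A]$, namely $\bigl((s\cdot e^{i\theta},\, p\cdot B),\, \tilde\rho(e^{i\theta},B)^{-1}A\bigr) = \bigl((s\cdot e^{i\theta},\, p\cdot B),\, B^{-1}e^{-i\theta}A\bigr)$ with $(e^{i\theta},B)\in U(1)\times \Mpc(V,\Omega,j)$, apply the defining formula, and use the $\Mpc$-equivariance of $\phi$ together with $\sigma(e^{-i\theta})=1$ to compute $\phi(p\cdot B)\,\sigma(B^{-1}e^{-i\theta}A) = \phi(p)\sigma(B)\sigma(B)^{-1}\sigma(A) = \phi(p)\sigma(A)$, as required. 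The equivariance of $\phi'$ then follows from $\sigma$ being a homomorphism: $\phi'([(s,p),AU]) = \phi(p)\sigma(AU) = \phi'([(s,p),A])\cdot\sigma(U)$.

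The only place where anything non-trivial is used, and hence the main conceptual point, is the centrality of $U(1)$ inside $\Mpc(V,\Omega,j)$: it is this fact that simultaneously makes $\tilde\rho$ a homomorphism and forces the $U(1)$-factors to disappear under $\sigma$ in the verification of $\phi'$. Everything else is a routine bookkeeping exercise on associated bundles, with no genuine obstacle.
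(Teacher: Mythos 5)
Your verification is correct and complete; the paper itself states this lemma without proof, so there is nothing to compare against except the implicit standard argument, which is exactly what you wrote. You correctly identify the single non-trivial ingredient — centrality of $U(1)$ in $\Mpc(V,\Omega,j)$ (from the central extension (\ref{mpc:ext})) — which makes $\tilde\rho$ a homomorphism and makes $\sigma$ kill the $U(1)$ factor in the well-definedness check for $\phi'$. Your computation of well-definedness, taking the general representative $\bigl((s\cdot e^{i\theta}, p\cdot B),\, e^{-i\theta}B^{-1}A\bigr)$ and collapsing $\phi(p\cdot B)\,\sigma(e^{-i\theta}B^{-1}A)$ back to $\phi(p)\sigma(A)$, is exactly right, and the equivariance of $\phi'$ under the $\Mpc$-action follows directly as you say. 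This is the argument the authors are implicitly relying on.
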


At the level of isomorphism classes this Lemma gives an action of
$H^2(M,\Z)$ on the set of isomorphism classes of $\Mpc$-structures for a
fixed symplectic structure.

%%%%%%%%%%%%%%%%%%%%%%%%%%%%%%%
\begin{lemma}\label{canonicalLfrom2mpcstr}
Given two $\Mpc$-structures  $\left(P,\phi\right)$
and $\left(P',\phi'\right)$ over the same  symplectic manifold $(M,\omega)$, 
there is a canonical  principal
$U(1)$-bundle $L^{(1)}$  over $M$ constructed in the following way. 
One first defines the fibrewise product of $P$ and $P'$ 
viewed as principal $U(1)$ bundles over $Sp(M,\omega)$:
\[
P\times_{\phi\times\phi'}P':=\{ \, (p,p') \in P\times P'\,\vert\, \phi(p)=\phi'(p')\,\}.
\]
One defines the group
\[
\Mpc(V,\Omega,j)\times_{\sigma} \Mpc(V,\Omega,j):=
\{ (A,B)\,\vert\,  A, B \in \Mpc(V,\Omega,j),\, \sigma(A)=\sigma(B)\,\}
\]
which acts on the right on $P\times_{\phi\times\phi'}P'$ in the obvious way 
$(p,p')\cdot (A,B):=(p\cdot A, p'\cdot B)$
and  makes $P\times_{\phi\times\phi'}P'$ into a principal 
$ \Mpc(V,\Omega,j)\times_{\sigma} \Mpc(V,\Omega,j)$ bundle over $M$. Then
\begin{equation}
L^{(1)}:=(P\times_{\phi\times\phi'}P')
\times_{ (\Mpc(V,\Omega,j)\times_{\sigma} \Mpc(V,\Omega,j),\rho)}U(1)
\end{equation}
for the homomorphism 
\[
\rho:  \Mpc(V,\Omega,j)\times_{\sigma}
\Mpc(V,\Omega,j)\rightarrow U(1) : (A,B)\mapsto AB^{-1}.
\]
Clearly 
\[
P=L^{(1)}\cdot P'.
\]
\end{lemma}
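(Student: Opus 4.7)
The plan is to verify the three items implicit in the statement in order: that $\rho$ is a well-defined homomorphism, that $P \times_{\phi\times\phi'} P'$ is a principal $\Mpc(V,\Omega,j)\times_{\sigma}\Mpc(V,\Omega,j)$-bundle over $M$, and finally that the resulting $U(1)$-bundle $L^{(1)}$ satisfies $P \simeq L^{(1)} \cdot P'$.

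First, if $(A,B)$ belongs to $\Mpc(V,\Omega,j)\times_{\sigma}\Mpc(V,\Omega,j)$ then $\sigma(AB^{-1})=1$, so $AB^{-1} \in \ker\sigma = U(1)$, showing $\rho$ is well-defined. It is a homomorphism because $U(1)$ is central in $\Mpc(V,\Omega,j)$: $\rho((A_1,B_1)(A_2,B_2))=A_1A_2B_2^{-1}B_1^{-1}=A_1B_1^{-1}\cdot B_1 A_2 B_2^{-1} B_1^{-1}=\rho(A_1,B_1)\rho(A_2,B_2)$ since $A_2 B_2^{-1}\in U(1)$ commutes with $B_1$. Next, the equivariance $\phi(p\cdot A)=\phi(p)\sigma(A)$ makes $\phi\colon P\to Sp(M,\omega)$ a principal $U(1)$-bundle (its fibres are exactly the $\ker\sigma$-orbits), and likewise for $\phi'$; hence $P\times_{\phi\times\phi'}P'$ is the fibred product of two $U(1)$-bundles over $Sp(M,\omega)$, which is in turn a principal $Sp(V,\Omega)$-bundle over $M$. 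A local symplectic frame $(p_0,p_0')$ with $\phi(p_0)=\phi'(p_0')$ (which one obtains by trivialising $P$ and $P'$ separately and modifying one factor by a local lift) parametrises nearby pairs as $(p_0\cdot A,p_0'\cdot B)$ with $\sigma(A)=\sigma(B)$, giving the required principal $\Mpc(V,\Omega,j)\times_{\sigma}\Mpc(V,\Omega,j)$-bundle structure; the associated bundle $L^{(1)}$ via $\rho$ is then a principal $U(1)$-bundle.

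For the final claim $P = L^{(1)} \cdot P'$, I would construct an explicit isomorphism $\Psi : L^{(1)}\cdot P' \to P$. An element of $L^{(1)}\cdot P'$ has the form $[(s,p'),A]$ where $s=[(p,p''),\mu]\in L^{(1)}$ with $\phi(p)=\phi'(p'')$, $\tilde\pi(s)=\pi'(p')=\pi(p)$, and $A\in\Mpc(V,\Omega,j)$. Since $p'$ and $p''$ lie in the same fibre of $P'$, there is a unique $B\in\Mpc(V,\Omega,j)$ with $p'=p''\cdot B$. Set
\[
\Psi\bigl([(s,p'),A]\bigr) := p \cdot \mu \cdot B \cdot A \in P.
\]

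To check this is independent of the representative of $s$, replace $(p,p'')$ by $(p\cdot C,p''\cdot D)$ with $\sigma(C)=\sigma(D)$, so $\mu$ becomes $C^{-1}D\mu$ and $B$ becomes $D^{-1}B$; the new expression is $pC\cdot C^{-1}D\mu\cdot D^{-1}B\cdot A = p\cdot(D\mu D^{-1})\cdot BA = p\mu B A$, using that $\mu\in U(1)$ is central. Similarly, replacing $((s,p'),A)$ by $((s\cdot e^{i\theta},p'\cdot U),e^{-i\theta}U^{-1}A)$ (the relation in $L^{(1)}\cdot P'$) alters $\mu$ to $\mu e^{i\theta}$ and $B$ to $BU$, and one checks $p\cdot \mu e^{i\theta}\cdot BU\cdot e^{-i\theta}U^{-1}A=p\mu BA$. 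By construction $\Psi$ is $\Mpc(V,\Omega,j)$-equivariant (via $A\mapsto AA'$), covers the identity on $M$, and intertwines $\phi''([(s,p'),A])=\phi'(p')\sigma(A)$ with $\phi$ since $\phi(p\mu BA)=\phi(p)\sigma(B)\sigma(A)=\phi'(p''B)\sigma(A)=\phi'(p')\sigma(A)$. A morphism of principal $\Mpc(V,\Omega,j)$-bundles over $M$ that respects the $Sp(M,\omega)$-valued maps is automatically an isomorphism, so $P\simeq L^{(1)}\cdot P'$. The delicate step—the only real obstacle—is the well-definedness of $\Psi$; it is precisely where the centrality of the $U(1)$ in the extension (\ref{mpc:ext}) is used, and where the definition of $\rho$ as $AB^{-1}$ (rather than $A^{-1}B$) is forced.
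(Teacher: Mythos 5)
Your proof is correct and complete. The paper itself offers no argument for the final claim $P = L^{(1)}\cdot P'$ (it says ``Clearly'' and moves on), so there is no proof to compare against; your explicit isomorphism $\Psi([(s,p'),A])=p\mu BA$, with the well-definedness checks exploiting centrality of $U(1)$ in $\Mpc$ and the compatibility with the $Sp(M,\omega)$-valued maps, supplies exactly the verification the paper omits and is the natural way to do it.
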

This Lemma shows in particular that the action is simply transitive.

\subsection{Invariant $\Mpc$-structures}\label{mpcinv}

\begin{definition}
If there is a symplectic  action $\rho^M$ of the Lie group $G$ on the symplectic
manifold $(M,\omega)$, a $G$-invariant $Mp^c$-structure 
on $(M,\omega,\rho^M)$ is an
$Mp^c$-structure ($P\stackrel{\pi}{\rightarrow} M$, $\phi: P\rightarrow
Sp(M,\omega)$) and an action $\rho^P$ of $G$ on $P$, commuting with the right action 
of the group $Mp^c$, and  such that
\[
\phi\circ \rho^P(g)=\tilde{\rho}(g)\circ \phi \quad\quad \forall g\in G
\]
where $\tilde{\rho}$ is the action induced by $\rho^M$ on $Sp(M,\omega)$ i.e.
\[
\tilde{\rho}(g)b=\rho^M(g)_{*x}\circ b\textrm{ for } b:V\rightarrow T_xM.
\]
\end{definition}
\begin{remark}\label{remark:invPeta}
Given an action $\rho^M$ of the Lie group $G$ on the symplectic manifold
$(M,\omega)$ and given a $G$-invariant $\Mpc$-structures  $\left(P,\phi,
\rho^P\right)$ the canonical  principal line bundle
$P(\eta)=P\times_{\Mpc,\eta}\C$  over $M$ is $G$-invariant in a
canonical way with $g\cdot [p,z]:=[\rho^P(g)p,z]$.
\end{remark}

\begin{lemma}
Given a symplectic action $\rho^M$ of the Lie group $G$ on the symplectic manifold
$(M,\omega)$ and given two $G$-invariant $\Mpc$-structures 
$\left(P,\phi, \rho^P\right)$ and $\left(P',\phi',\rho^{P'}\right)$ over
$(M,\omega,\rho^M)$, the canonical  principal $U(1)$-bundle $L^{(1)}$ 
over $M$ constructed in Lemma \ref{canonicalLfrom2mpcstr} is  
$G$-invariant in a canonical way.
\end{lemma}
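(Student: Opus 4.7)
The plan is to define the $G$-action on $L^{(1)}$ by lifting the diagonal action of $G$ on $P\times P'$ to the fibrewise product and then descending to the $\Mpc(V,\Omega,j)\times_\sigma\Mpc(V,\Omega,j)$-quotient. Concretely, I would set
\[
g\cdot[(p,p'),z] := [(\rho^P(g)p,\rho^{P'}(g)p'),z],\qquad g\in G,\ (p,p')\in P\times_{\phi\times\phi'}P',\ z\in U(1),
\]
and verify that this prescription is forced (hence canonical) by the invariance data on $P$ and $P'$.

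First I would check that $(\rho^P(g)p,\rho^{P'}(g)p')$ still lies in the fibred product: using the defining equivariance $\phi\circ\rho^P(g)=\tilde\rho(g)\circ\phi$ and the analogous identity for $\phi'$, if $\phi(p)=\phi'(p')$ then $\phi(\rho^P(g)p)=\tilde\rho(g)\phi(p)=\tilde\rho(g)\phi'(p')=\phi'(\rho^{P'}(g)p')$. Thus $G$ acts smoothly on $P\times_{\phi\times\phi'}P'$, and since $\rho^P$ and $\rho^{P'}$ commute with the right actions of $\Mpc(V,\Omega,j)$ on $P$ and $P'$ respectively, the diagonal $G$-action on $P\times_{\phi\times\phi'}P'$ commutes with the right action of $\Mpc(V,\Omega,j)\times_\sigma \Mpc(V,\Omega,j)$. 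Consequently the action descends to the associated bundle $L^{(1)}$ and the formula above is unambiguous on equivalence classes.

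Next I would verify that this descended action preserves the principal $U(1)$-bundle structure on $L^{(1)}$: because $G$ acts trivially on the $U(1)$ factor in $[(p,p'),z]$ and commutes with the right $\Mpc\times_\sigma\Mpc$-action, it commutes in particular with the residual $U(1)$-action on $L^{(1)}$ obtained via the homomorphism $\rho$, and it covers the original $G$-action $\rho^M$ on $M$ via the projection $L^{(1)}\to M$. The identity and composition axioms for the $G$-action on $L^{(1)}$ are inherited from those on $P\times_{\phi\times\phi'}P'$, which are themselves inherited from $\rho^P$ and $\rho^{P'}$.

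The main point to be careful about is the well-definedness step, i.e.\ checking that the diagonal action preserves the fibred product and commutes with the full group $\Mpc(V,\Omega,j)\times_\sigma\Mpc(V,\Omega,j)$; everything else is a formal consequence of the functoriality of the associated-bundle construction. Since the only choices made are $\rho^P$ and $\rho^{P'}$, which are part of the invariance data of the two $\Mpc$-structures, the resulting $G$-action on $L^{(1)}$ is canonical, proving the Lemma.
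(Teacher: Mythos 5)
Your argument is correct and follows the same route as the paper's proof: define the diagonal $G$-action on the fibred product $P\times_{\phi\times\phi'}P'$, check it preserves the fibred-product condition using the equivariance of $\phi$ and $\phi'$, check it commutes with the right $\Mpc(V,\Omega,j)\times_\sigma\Mpc(V,\Omega,j)$-action, and descend to $L^{(1)}$. The extra verifications you mention (that it covers $\rho^M$, that the action axioms hold) are sound and only make explicit what the paper leaves implicit.
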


\begin{proof}
One first defines the action of $G$ on the fibrewise product
$P\times_{\phi\times\phi'}P'$ of $P$ and $P'$ over $Sp(M,\omega)$:
\[
g\cdot (p,p'):=(\rho^P(g)p,\rho^{P'}(g)p');
\]
this is indeed an action on $P\times_{\phi\times\phi'}P'$ since $
\phi(\rho^P(g)p)=\tilde{\rho}(g)\phi(p) =\tilde{\rho}(g)\phi(p')
=\phi'(\rho^{P'}(g)p')$ when $ \phi(p)=\phi'(p')$. Then
\[
g\cdot \left((p,p')\cdot (A,B)\right)=
(\rho^P(g)(p\cdot A),\rho^{P'}(g)(p'\cdot B)=
\left(g\cdot (p,p')\right)\cdot (A,B)
\]
for all $(A,B) \in (\Mpc(V,\Omega,j)\times_{\sigma} \Mpc(V,\Omega,j)$
so that  $G$ acts on the circle bundle 
\[
L^{(1)}:=(P\times_{\phi\times\phi'}P')
\times_{ (\Mpc(V,\Omega,j)\times_{\sigma} \Mpc(V,\Omega,j),\rho)}U(1)
\]
via $g\cdot [(p,p'),A]:=[g\cdot(p,p'),A]$.
\end{proof}
\begin{lemma}
Consider an action $\rho^M$ of the Lie group $G$ on the symplectic
manifold $(M,\omega)$. Given a $G$-invariant $\Mpc$-structures 
$\left(P,\phi, \rho^P\right)$ and given a $G$-invariant principal
$U(1)$-bundle over $M$, i.e. a $U(1)$-bundle
$L^{(1)}\stackrel{\tilde\pi}{\rightarrow} M$ with a left action $\rho^L$
of $G$ on $L^{(1)}$ commuting with the right action of $U(1)$ and so
that ${\tilde\pi}\circ \rho^L(g)=\rho^M(g)\circ {\tilde\pi}$, the  new
$\Mpc$-structure $L^{(1)}\cdot P$ on $M$ defined in Lemma
\ref{actionofLonmpcstr}, is $G$-invariant in a canonical way.
\end{lemma}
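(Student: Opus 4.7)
The plan is to mimic the proof of the previous lemma (the one constructing the canonical $U(1)$-bundle out of two $\Mpc$-structures), defining the $G$-action first on the fibrewise product $L^{(1)}\times_M P$ and then descending it to the associated bundle that defines $L^{(1)}\cdot P$.

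First I would set, for $g\in G$ and $(s,p)\in L^{(1)}\times_M P$,
\[
g\cdot(s,p):=\bigl(\rho^L(g)s,\rho^P(g)p\bigr).
\]
To see this actually lands in $L^{(1)}\times_M P$ I need $\tilde\pi(\rho^L(g)s)=\pi(\rho^P(g)p)$; the left-hand side equals $\rho^M(g)\tilde\pi(s)$ by hypothesis on $L^{(1)}$, while the right-hand side equals $\rho^M(g)\pi(p)$ because $\rho^P$ covers the $G$-action on $M$ (a consequence of $\phi\circ\rho^P(g)=\tilde\rho(g)\circ\phi$, together with the fact that $\tilde\rho(g)$ covers $\rho^M(g)$ on $Sp(M,\omega)$). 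The equality $\tilde\pi(s)=\pi(p)$ then does the job. This defines a left $G$-action on $L^{(1)}\times_M P$, and since $\rho^L$ commutes with the $U(1)$-action on $L^{(1)}$ and $\rho^P$ commutes with the $\Mpc$-action on $P$, this $G$-action commutes with the right action of $U(1)\times\Mpc(V,\Omega,j)$ on $L^{(1)}\times_M P$.

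Next I would pass to the quotient $P'=(L^{(1)}\times_M P)\times_{U(1)\times\Mpc(V,\Omega,j),\tilde\rho}\Mpc(V,\Omega,j)$ and set
\[
\rho^{P'}(g)\bigl[(s,p),A\bigr]:=\bigl[g\cdot(s,p),A\bigr].
\]
Because the $G$-action on $L^{(1)}\times_M P$ commutes with the right $U(1)\times\Mpc(V,\Omega,j)$-action used to form the associated bundle, this is well defined on equivalence classes, and for the same reason it commutes with the residual right $\Mpc(V,\Omega,j)$-action on $P'$ given by $[(s,p),A]\cdot B=[(s,p),AB]$. It is clear from the formula that $\rho^{P'}$ is a left action of $G$ on $P'$ and covers $\rho^M$.

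Finally I would check the $\Mpc$-structure compatibility $\phi'\circ\rho^{P'}(g)=\tilde\rho(g)\circ\phi'$. By the construction of $L^{(1)}\cdot P$ one has $\phi'\bigl([(s,p),A]\bigr)=\phi(p)\cdot\sigma(A)$, so
\[
\phi'\bigl(\rho^{P'}(g)[(s,p),A]\bigr)
=\phi\bigl(\rho^P(g)p\bigr)\cdot\sigma(A)
=\tilde\rho(g)\bigl(\phi(p)\bigr)\cdot\sigma(A)
=\tilde\rho(g)\bigl(\phi'([(s,p),A])\bigr),
\]
using $G$-invariance of $(P,\phi,\rho^P)$. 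This finishes the verification, and the whole construction is canonical because no choices were made. None of the steps presents a genuine obstacle; the only thing to be careful about is the well-definedness on the quotient, which is guaranteed precisely by the two commutation properties assumed for $\rho^L$ and $\rho^P$ with their structure group actions.
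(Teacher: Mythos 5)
Your proof follows exactly the same route as the paper's: define the $G$-action on $L^{(1)}\times_M P$ componentwise via $\rho^L$ and $\rho^P$, observe it commutes with the right $U(1)\times\Mpc(V,\Omega,j)$-action, and descend to the associated bundle. You add some detail the paper leaves implicit, namely the check that the componentwise action actually lands in the fibrewise product and the verification of $\phi'\circ\rho^{P'}(g)=\tilde\rho(g)\circ\phi'$, but the argument is the same.
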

\begin{proof}
 The group $G$ acts on the
fibrewise product of  $L^{(1)}$ and $P$ over $M$, $
L^{(1)}\times_M P$
via
\[
g\cdot (s,p):=(\rho^L(g)s,\rho^P(g)p)
\]
and this action commutes with the right action of $ U(1)\times \Mpc(V,\Omega,j)$ 
so that $G$ acts on 
$L^{(1)}\cdot P=\left(L^{(1)}\times_M P\right)
\times_{\left( U(1)\times \Mpc(V,\Omega,j),\tilde\rho\right)}\Mpc(V,\Omega,j)$ via
\[
\rho^{L^{(1)}\cdot P}(g)[(s,p),A]:=[g\cdot (s,p),A]=[(\rho^L(g)s,\rho^P(g)p),A].
\]
\end{proof}
The two lemmas above show that if there exists one $G$-invariant
$\Mpc$-structure on a symplectic manifold with a symplectic action of $G$, then
any other one is obtained by acting on the first one by a $G$-invariant
$U(1)$-bundle over $M$.

 Let us remark that in general there is no
basic $G$-invariant $\Mpc$-structure; this will happen in particular on
some pseudo-Hermitean symmetric spaces; on those, there is a
pseudo-basic invariant  $\Mpc$-structure. This stresses again the point
that one should not restrict the study to basic $\Mpc$-structures.

\begin{remark}
Let us observe that if there is a symplectic  action $\rho^M$ of the Lie group $G$
on the symplectic manifold $(M,\omega)$, and  if the symplectic manifold
is endowed with a liftable $H$-structure $(B,\tau,\widetilde{\tau})$,
which is $G$-invariant (i.e. there is an  action $\rho^B$
of $G$ on $B$, commuting with the right action of the group $H$, and 
such that $ i( \rho^B(g) b)=\rho^M(g)_{*\pi^B(b)}\circ i(b) $ with $i$
the natural  bundle map $i:B\rightarrow Sp(M,\omega)$ defined in
(\ref{eq:defindei})), then $P_{H
basic}$ %(and more generally any compatible $\Mpc$structure)
 is $G$-invariant. 
\end{remark}

\subsubsection{Invariant $\Mpc$-structures on homogeneous spaces}

Consider a transitive  symplectic action $\rho^M$ of the Lie group $G$ on the symplectic
manifold $(M,\omega)$.  Choose a base point $p_0\in M$ and let 
$K$ be the stabilizer in $G$ of this point.
The canonical projection
$$\pi:G\rightarrow M : g\mapsto gp_0$$
gives an identification  $M\simeq G/K$ and the differential $\pi_{*e}$ at the neutral element $e\in G$
identifies the tangent space $T_{p_0}M$ with the quotient $\g/\K$, which becomes a symplectic vector space.
The differential at $p_0$  of the action $\rho^M$ restricted to $K$
yields a homomorphism
$$
K\rightarrow  Sp(T_{p_0}M,\omega_{p_0}) : k\mapsto (\rho^M(k))_{*p_0};
$$
with the identification of $T_{p_0}M$ with $\g/\K$ , it coincides  with the map induced by $\Ad(k)$ on $\g/\K$.
Having chosen a symplectic frame $f_0:V\rightarrow T_{p_0}M$ at $p_0$,  any symplectic frame at $\rho^M(g)p_0$ is of the form
$$
(\rho^M(g))_{*p_0}\circ f_0\circ A \textrm{ for an } A \in Sp(V,\Omega)
$$
 so the bundle of  symplectic frames is given by
$$
Sp(M,\omega)= G\times_{K,\tau}Sp(V,\Omega)
$$
for 
$$
 \tau: K\rightarrow Sp(V,\Omega): k\mapsto f_0^{-1}\circ (\rho^M(k))_{*p_0}\circ  f_0.
$$
If one has a lift $\tilde{\tau} : K\rightarrow \Mpc(V,\Omega,j)$, i.e. a group homomorphism
such that $\tau= \sigma\circ \tilde{\tau}$, then 
$$
P:=G\times_{K, \tilde{\tau}}\Mpc(V,\Omega,j)
$$
with the map $ \phi: P=G\times_{K, \tilde{\tau}}\Mpc(V,\Omega,j)\rightarrow Sp(M,\omega)=G\times_{K,\tau}Sp(V,\Omega)$ induced by $\id\times\sigma$ defines a $G$- invariant $\Mpc$-structure on $M$; the action of $G$ on $P$ is induced by the left multiplication
on the first factor $\rho^P(g)[g',B]:=[gg',B]$.\\
Reciprocally, if $(P,\phi)$ is any $G$- invariant $\Mpc$-structure on $M$ and if $\xi_0$ belongs to
$\phi^{-1}(f_0)$, then any element of $P$ above $\rho^M(g)p_0$ is of the form
$$
\rho^P(g)\circ  \xi_0\circ B \textrm{ for a } B \in \Mpc(V,\Omega,j)
$$
 so that
$$
P= G\times_{K, \tilde{\tau}}\Mpc(V,\Omega,j)
$$
for 
$$
 \tilde{\tau}: K\rightarrow \Mpc(V,\Omega,j)\textrm{ defined by } k\cdot \xi_0=\xi_0 \circ  \tilde{\tau}(k)
$$
which is clearly a lift of $\tau$. Hence we have
\begin{proposition}
Given a transitive  symplectic action $\rho^M$ of a Lie group $G$ on a symplectic
manifold $(M,\omega)$, there exists a $G$-invariant $\Mpc$-structure on $M$
if and only if there exists a lift $\tilde{\tau} : K\rightarrow \Mpc(V,\Omega,j)$ of the isotropy representation
$$
 \tau: K\rightarrow Sp(V,\Omega): k\mapsto f_0^{-1}\circ (\rho^M(k))_{*p_0}\circ  f_0
$$
where $p_0$ is a chosen point in $M$, where $K$ is the stabilizer in $G$ of this point and where
$f_0$ is a chosen symplectic frame at $p_0$.
Furthermore, any $G$-invariant $\Mpc$-structure on $M$ is of the form
$$
P= G\times_{K, \tilde{\tau}}\Mpc(V,\Omega,j)
$$
with $\tilde{\tau}$ such a lift.
\end{proposition}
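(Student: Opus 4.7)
The plan is to prove both directions by exploiting the standard identification $M \simeq G/K$ together with the description of $Sp(M,\omega)$ as a bundle associated to the isotropy representation. First I would fix the base point $p_0$ and symplectic frame $f_0:V\to T_{p_0}M$, note that transitivity gives $Sp(M,\omega)\simeq G\times_{K,\tau}Sp(V,\Omega)$ (any symplectic frame at $\rho^M(g)p_0$ has the form $(\rho^M(g))_{*p_0}\circ f_0\circ A$ for some $A\in Sp(V,\Omega)$, and the ambiguity in $g$ is exactly the isotropy action via $\tau$). This is the key structural fact underlying everything.

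For the sufficiency direction, assuming a lift $\tilde\tau:K\to \Mpc(V,\Omega,j)$ exists, I would set $P:=G\times_{K,\tilde\tau}\Mpc(V,\Omega,j)$ with right $\Mpc$-action $[g,B]\cdot A:=[g,BA]$ and left $G$-action $\rho^P(g)[g',B]:=[gg',B]$. The map $\phi:P\to Sp(M,\omega)$ induced by $\id\times\sigma$ is well-defined precisely because $\sigma\circ\tilde\tau=\tau$, and a direct check confirms $\phi(p\cdot A)=\phi(p)\cdot\sigma(A)$ and $\phi\circ\rho^P(g)=\tilde\rho(g)\circ\phi$, so $(P,\phi,\rho^P)$ is a $G$-invariant $\Mpc$-structure.

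For the necessity direction, given a $G$-invariant $\Mpc$-structure $(P,\phi,\rho^P)$, pick any $\xi_0\in\phi^{-1}(f_0)$. For $k\in K$ the equivariance of $\phi$ gives $\phi(\rho^P(k)\xi_0)=\tilde\rho(k)f_0=f_0\circ\tau(k)=\phi(\xi_0)\cdot\tau(k)$, and since $\phi$ is $\sigma$-equivariant on fibres, there is a unique element $\tilde\tau(k)\in\Mpc(V,\Omega,j)$ with $\sigma(\tilde\tau(k))=\tau(k)$ and $\rho^P(k)\xi_0=\xi_0\cdot\tilde\tau(k)$. The map $\tilde\tau$ is then a lift of $\tau$, and the map $G\times_{K,\tilde\tau}\Mpc(V,\Omega,j)\to P$, $[g,B]\mapsto\rho^P(g)\xi_0\cdot B$, gives the claimed isomorphism of $\Mpc$-structures.

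The main point to verify carefully will be that $\tilde\tau$ obtained in the second direction is actually a \emph{group} homomorphism rather than a mere set-theoretic lift of $\tau$. This follows from applying the defining identity $\rho^P(k)\xi_0=\xi_0\cdot\tilde\tau(k)$ twice and using both that $\rho^P$ is a left action of $K$ and that it commutes with the right $\Mpc$-action: $\xi_0\cdot\tilde\tau(k_1k_2)=\rho^P(k_1)\rho^P(k_2)\xi_0=\rho^P(k_1)(\xi_0\cdot\tilde\tau(k_2))=(\xi_0\cdot\tilde\tau(k_1))\cdot\tilde\tau(k_2)$, and the freeness of the $\Mpc$-action on $P$ gives $\tilde\tau(k_1k_2)=\tilde\tau(k_1)\tilde\tau(k_2)$. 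Smoothness is automatic from smoothness of $\rho^P$ and local triviality of $P$.
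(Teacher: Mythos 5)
Your proof is correct and follows essentially the same approach as the paper: identify $Sp(M,\omega)$ with $G\times_{K,\tau}Sp(V,\Omega)$, build the $\Mpc$-structure from a lift via the associated-bundle construction, and in reverse read off $\tilde\tau$ from the action of $K$ on a fixed $\xi_0 \in \phi^{-1}(f_0)$. The only difference is that you spell out the verification that $\tilde\tau$ is a smooth group homomorphism (using freeness of the principal $\Mpc$-action and commutation of $\rho^P$ with it), which the paper leaves implicit with ``which is clearly a lift of $\tau$.''
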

We have seen that such lifts exist when $\tau(K)$ is in the pseudo-unitary group or in the group of symplectic endomorphisms which stabilize a real or complex Lagrangian subspace.
\subsubsection{Pseudo-Hermitean or bi-Lagrangian  homogeneous spaces}

A $G$-homogeneous space endowed with a $G$-invariant symplectic
structure and a $G$-in\-var\-iant compatible almost complex structure $\Jtilde$ is
called a \emph{pseudo-Hermitean homogeneous space}. 
The stabilizer  $K$ of a point $p_0$ acts on $T_{p_0}M$  (endowed with $\omega_{p_0}$ and $\Jtilde_{p_0}$) in a pseudo-unitary way.
Hence if  $(M, \omega, {\Jtilde})$ is a pseudo-Hermitean 
homogeneous  manifold, it is endowed with a $G$-invariant liftable
$K$-structure, where the $K$-principal bundle is $G\rightarrow G/K$, where
\begin{equation}
\tau= K \rightarrow U(V,\Omega,\jtilde) 
: k \mapsto \tau(k):=f_0^{-1}\circ \rho^M(k)_{*p_0}\circ f_0
\end{equation}
for a pseudo-unitary frame at $p_0$, $f_0:(V,\Omega,\jtilde)\rightarrow (T_{p_0}M, \omega_{p_0},\Jtilde_{p_0})$.
Any  homogeneous Hermitean  line bundle $L$  over $M$ is defined by a
character $\chi:K\rightarrow U(1)$ via $L=G\times_{K,\chi}\C$. Hence:
\begin{proposition}
Any $G$-invariant $Mp^c$-structure over a pseudo-Hermitean 
homogeneous  manifold $(M, \omega, {\Jtilde})$ is of the form
\begin{equation} \label{mpcstruct}
P(G,K,\omega,{\Jtilde},\chi):
= G\times_{K,\chi \times ( F_{{\jtilde},j}\circ\tau)} Mp^c(\p,\Omega,j).
\end{equation}
with $K$ the stabilizer of a point $p_0\in M$, 
with $\chi$ a unitary  character  of $K$, $\chi:K\rightarrow U(1)$, with $\tau(k):=
f_0^{-1}\circ \rho^M(k)_{*p_0}\circ f_0$ where $f_0$ is a  pseudo-unitary frame at $p_0$,
 and with  $F_{{\jtilde},j}: U(V,\Omega,\jtilde)
\rightarrow \Mpc(\p,\Omega,j)$ defined as in (\ref{Ftldejj}).
\end{proposition}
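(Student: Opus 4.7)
The plan is to combine the classification of invariant $\Mpc$-structures given by the preceding proposition (which identifies them with lifts of the isotropy representation) with the explicit lift of the pseudo-unitary group provided by Proposition \ref{lemma:split}, and then parametrise all possible lifts of the isotropy representation.

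First I would observe that since $\Jtilde$ is $G$-invariant, the stabiliser $K$ of $p_0$ acts on $T_{p_0}M$ by transformations that preserve both $\omega_{p_0}$ and $\Jtilde_{p_0}$. Choosing a pseudo-unitary frame $f_0 : (V,\Omega,\jtilde) \to (T_{p_0}M,\omega_{p_0},\Jtilde_{p_0})$, the isotropy representation $\tau(k) = f_0^{-1}\circ \rho^M(k)_{*p_0}\circ f_0$ therefore takes values in $U(V,\Omega,\jtilde)$. Fixing $j \in j_+(V,\Omega)$ commuting with $\jtilde$, Proposition \ref{lemma:split} supplies a homomorphism $F_{\jtilde,j} : U(V,\Omega,\jtilde) \to \Mpc(V,\Omega,j)$ with $\sigma \circ F_{\jtilde,j} = \Id$, so $\tilde\tau_0 := F_{\jtilde,j}\circ \tau : K \to \Mpc(V,\Omega,j)$ is a distinguished lift of $\tau$. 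By the previous proposition, this already produces one $G$-invariant $\Mpc$-structure, namely the one corresponding to $\chi \equiv 1$.

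Next I would classify all lifts. If $\tilde\tau$ is any other lift of $\tau$, then for each $k \in K$ the element $\tilde\tau(k)\tilde\tau_0(k)^{-1}$ lies in $\ker\sigma = U(1)$, which is central in $\Mpc(V,\Omega,j)$. Writing $\chi(k) := \tilde\tau(k)\tilde\tau_0(k)^{-1}$, centrality implies that $\chi : K \to U(1)$ is a smooth group homomorphism, i.e.~a unitary character of $K$. Conversely, given any unitary character $\chi$ of $K$, the product $k \mapsto \chi(k) \cdot F_{\jtilde,j}(\tau(k))$ is again a homomorphism into $\Mpc(V,\Omega,j)$ lifting $\tau$ (again by centrality of the $U(1)$-embedding). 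Thus the set of lifts of $\tau$ is in bijection with $\Hom(K,U(1))$.

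Combining these two ingredients with the previous proposition, every $G$-invariant $\Mpc$-structure on $M$ has the form $G \times_{K,\tilde\tau}\Mpc(V,\Omega,j)$ for a lift of the form $\tilde\tau = \chi \times (F_{\jtilde,j}\circ \tau)$, which is exactly $P(G,K,\omega,\Jtilde,\chi)$. There is no substantive obstacle to the argument: the pseudo-unitary structure produces the required distinguished lift, and the classification of lifts by $\Hom(K,U(1))$ is forced by the centrality of $U(1)$ in the extension \eqref{mpc:ext}. The only minor point worth checking is that the character is automatically $U(1)$-valued (not merely $\C^*$-valued), which follows from the fact that $\ker\sigma = U(1)$ rather than $\C^*$.
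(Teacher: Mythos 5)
Your proposal is correct and follows essentially the same route as the paper: identify $G$-invariant $\Mpc$-structures with lifts $\wt\tau$ of the isotropy representation, use $F_{\jtilde,j}$ from Proposition \ref{lemma:split} as the distinguished lift, and note that the set of lifts is a torsor under $\Hom(K,U(1))$ by centrality of $\ker\sigma=U(1)$. The only cosmetic difference is that the paper phrases the last step via twisting by homogeneous Hermitean line bundles $G\times_{K,\chi}\C$ and the circle-bundle-action lemmas, while you argue directly with the lifts; the two formulations are equivalent and the content is the same.
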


A $G$-homogeneous space endowed with a $G$-invariant symplectic
structure and a $G$-in\-var\-iant bi-Lagrangian structure (i.e.  a $G$- invariant field  $A$
of endomorphisms of the tangent bundle so that $A_x\in
\sp(T_xM,\omega_x)$ and  $A_x^2=\id_{T_xM} \, \forall x\in M$) is called a
\emph{bi-Lagrangian homogeneous  space}. For a   bi-Lagrangian homogeneous
 space the  stabilizer $K$ of a point $p_0$ acts on $T_{p_0}M$
  by elements in the symplectic group which
commute with $ A_{p_0}$, hence of the form
 \[ 
\left(\begin{array}{cc}(\rho^M(k)_{*p_0})\vert_{T_+} & 0\\
 0 &(\rho^M(k)_{*p_0})\vert_{T_-} \end{array}\right)  \in \tau_{T_+} GL(T_+)
 \]
 under the decomposition  $T_{p_0}M=T_+\oplus T_-$ into $\pm1$ eigenspaces
 for $A_{p_0}$, 
  with $\tau_{T_+}$ defined as in (\ref{tauF}).
Hence if $(M, \omega, { A})$ is a bi-Lagrangian 
homogeneous  manifold, it is endowed with a liftable $K$-structure.
The $K$-principal bundle is $G\rightarrow G/K$ with 
\begin{equation}
\tau :  K \rightarrow   \tau_{F} GL(F)\subset Sp(V,\Omega) 
: k \mapsto \tau_F\left(f_{+0}^{-1}\circ (\rho^M(k)_{*p_0})\vert_{T_+}\circ f_{+0}\right)
\end{equation}
for any frame $ f_{+0}$ of $T_+$, $f_{+0}: F \rightarrow T_+$ with $F$ a Lagrangian subspace of $(V,\Omega)$.
Any   homogeneous hermitian  line bundle $L$  over $M$ is defined by a
character $\chi:K\rightarrow U(1)$ via $L=G\times_{K,\chi}\C$. Hence:
 \begin{proposition}
Any $G$-invariant $Mp^c$-structure over a bi-Lagrangian 
homogeneous  ma\-nifold $(M, \omega, { A})$ is of the form
\begin{equation} \label{mpcbiLstruct}
P(G,K,\omega,{ A},\chi):
= G\times_{K,\chi \times (\widetilde{\tau_F}\circ\rho_+ )} Mp^c(\p,\Omega,j).
\end{equation}
with $K$ the stabilizer of a point $p_0\in M$, 
with $\chi$ a unitary  character  of $K$, $\chi:K\rightarrow U(1)$, with $\rho_+(k):=
f_{+0}^{-1}\circ (\rho^M(k)_{*p_0})\vert_{T_+}\circ f_{+0}$ where $T_+$ is the $+1$ eigenspace for $A_{p_0}$ and where
$f_{+0}:F\rightarrow T_+$ is a  frame of $T_+$ at $p_0$,
 and  with 
$\widetilde{\tau_F}: GL(F) \rightarrow \Mpc(\p,\Omega,j)$
defined as in (\ref{eq:liftbilagrangian}).
\end{proposition}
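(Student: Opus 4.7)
The plan is to deduce this from the general classification of $G$-invariant $\Mpc$-structures on a homogeneous space established in the previous proposition: such structures are in bijection with lifts $\wt{\tau}:K\rightarrow \Mpc(V,\Omega,j)$ of the isotropy representation $\tau:K\rightarrow Sp(V,\Omega)$, via $P=G\times_{K,\wt{\tau}}\Mpc(V,\Omega,j)$. Thus the claim reduces to showing that every such lift has the form $\chi\times (\widetilde{\tau_F}\circ\rho_+)$ for a unique unitary character $\chi$ of $K$.

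First I would use the bi-Lagrangian structure to factor the isotropy representation. Since $A$ is $G$-invariant, each $\rho^M(k)_{*p_0}$ commutes with $A_{p_0}$ and so preserves the $\pm 1$-eigenspace decomposition $T_{p_0}M=T_+\oplus T_-$. The symplectic form pairs $T_+$ and $T_-$ non-degenerately, so the frame $f_{+0}:F\rightarrow T_+$ determines a canonical dual frame $F\rightarrow T_-$, and together they give a symplectic frame $f_0:V=F\oplus jF\rightarrow T_{p_0}M$. Relative to this frame, the action of $K$ is block-diagonal of the form $\tau_F(\rho_+(k))$ with $\rho_+(k)=f_{+0}^{-1}\circ(\rho^M(k)_{*p_0})|_{T_+}\circ f_{+0}$, so $\tau=\tau_F\circ\rho_+$ takes values in the image of $\tau_F:GL(F)\rightarrow Sp(V,\Omega)$ introduced in (\ref{tauF}).

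Next, I would exhibit a canonical lift by composing $\rho_+:K\rightarrow GL(F)$ with the group homomorphism $\widetilde{\tau_F}:GL(F)\rightarrow \Mpc(V,\Omega,j)$ from (\ref{eq:liftbilagrangian}), which by Proposition \ref{prop:realLgngn} satisfies $\sigma\circ\widetilde{\tau_F}=\tau_F$. This gives one distinguished lift of $\tau$. Any other lift $\wt{\tau}$ differs from it by a smooth map $\chi:K\rightarrow\ker\sigma=U(1)$ with $\wt{\tau}(k)=\chi(k)\cdot(\widetilde{\tau_F}\circ\rho_+)(k)$; since both $\wt{\tau}$ and $\widetilde{\tau_F}\circ\rho_+$ are homomorphisms and $U(1)$ is central in $\Mpc(V,\Omega,j)$, $\chi$ is a unitary character of $K$. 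Substituting $\wt{\tau}=\chi\times(\widetilde{\tau_F}\circ\rho_+)$ into $P=G\times_{K,\wt{\tau}}\Mpc(V,\Omega,j)$ yields exactly the form (\ref{mpcbiLstruct}).

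The only genuine subtlety lies in the frame-dependence of the constructions: one must verify that the symplectic frame $f_0$ obtained from $f_{+0}$ by symplectic duality is indeed a $(V,\Omega)$-frame adapted to the splitting $V=F\oplus jF$ so that the induced isotropy representation equals $\tau_F\circ\rho_+$ on the nose, and that a different choice of $f_{+0}$ only modifies $\rho_+$ by conjugation in $GL(F)$ and correspondingly changes the lift by an element of $\Mpc(V,\Omega,j)$, leaving the associated bundle $P(G,K,\omega,A,\chi)$ isomorphic. Once these frame checks are in hand, the proposition follows from the bijection between invariant $\Mpc$-structures and lifts of the isotropy representation.
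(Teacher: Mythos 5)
Your proof is correct and follows essentially the same route as the paper: it invokes the general classification of $G$-invariant $\Mpc$-structures by lifts of the isotropy representation, shows that $\tau$ factors through $\tau_F\circ\rho_+$ using the invariance of $A$ and the symplectic-dual frame on $T_-$, then observes that the lift $\widetilde{\tau_F}\circ\rho_+$ from Proposition~\ref{prop:realLgngn} is distinguished and any other lift differs by a unitary character of $K$. The paper phrases the last step via the simply transitive action of $G$-invariant $U(1)$-bundles (equivalently characters of $K$) on invariant $\Mpc$-structures, but this is the same argument.
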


%%%%%%%%%%%%%%%%%%%%%%%%%%
\subsection{$G$-invariant $Mp^c$-connections}
\begin{definition}
An {\emph{$\Mpc$-connection}} on an  $\Mpc$-structure $(P,\phi)$ is a
principal connection $\alpha$ on  $P$; in particular, it is a $1$-form
on $P$ with values in  $\mathfrak{mp}^\cc \simeq_{\sigma_*\times
\eta_*}\mathfrak{sp}(V,\Omega)) \oplus  \u(1)$. We decompose it
accordingly as 
 \[
 \alpha=\alpha_1+\alpha_0.
 \]
The character $\eta$ yields the construction of  a $U(1)$ principal bundle 
\[
P^{1}(\eta):=P\times_{\Mpc(V,\Omega,j),\eta}U(1)
\]
and there exists a map 
\[
\tilde\eta : P \rightarrow P^{1}(\eta): p\mapsto [p,1].
\]
Then   $\alpha_0$  is the pull-back of a
$\u(1)$-valued 1-form  on $P^{1}(\eta)$ under
the differential of $\tilde\eta$ and
\[
\alpha_0=2{\tilde\eta}^*\beta_0
\]
where $\beta_0$ is a principal $U(1)$ connection on $P^{1}(\eta)$,
because $\eta$ is the squaring map on the central $U(1)$.

Similarly $\alpha_1$ is the pull-back under the differential of $\phi:
P\rightarrow Sp(M,\omega)$ of a $\sp(V,\Omega)$-valued 1-form $\beta_1$
on $Sp(M,\omega)$ and 
\[
\alpha_1=\phi^*\beta_1
\] 
where $\beta_1$ is a principal $Sp(V,\Omega)$ connection on
$Sp(M,\omega)$, hence  corresponding to a linear connection $\nabla$ on
$M$ so that $\nabla\omega=0$.

Thus a $\Mpc$-connection on $P$ induces connections in $TM$ preserving
$\omega$ and in $P^1(\eta)$. The converse is true -- we pull back and
add connection 1-forms in $P^1(\eta)$ (with a factor 2) and in
$Sp(M,\omega)$ to get a connection 1-form on $P$.
\end{definition}
\begin{remark}
Let us observe that if the symplectic manifold is endowed with a
liftable $H$-structure, then any principal connection $\gamma$ on the
principal $H$-bundle $B$ induces a unique $\Mpc$-connection on each
 $\Mpc$-structure of the form $P=B\times_{H,\chi\times\widetilde\tau}
\Mpc(V,\Omega,j)$, where $\chi$ is a character of $H$  via
\[
\alpha_{[b,U]} \left(q_*(X_b+L_{U*}A)\right)= \Ad U^{-1} (\chi\times\widetilde\tau)_*(\gamma_b(X_b))+A
\]
where $q:B\times \Mpc(V,\Omega,j)\rightarrow P$ is the canonical projection.

Such a connection is said to be {\emph{compatible with the liftable
$H$-structure}}.
\end{remark}
\begin{definition}
A {\emph{$G$-invariant $Mp^c$-connection}} on a $G$-invariant
$Mp^c$-structure $P$ is a connection $1$-form $\alpha$ on the principal
bundle $P\stackrel{\pi}{\rightarrow} M$ such that
$\rho^P(g)^*\alpha=\alpha$ for all $g\in G$. Observe that this is the
case if and only if $ \alpha=\alpha_1+\alpha_0$ with 
$\alpha_0=2{\tilde\eta}^*\beta_0 $ where $\beta_0$ is a $G$-invariant
principal $U(1)$ connection on $P^{1}(\eta)$ and with $
\alpha_1=\phi^*\beta_1$ where $\beta_1$ is a $G$-invariant principal
$Sp(V,\Omega)$ connection on $Sp(M,\omega)$.
\end{definition}
Let us observe that if the symplectic manifold is endowed with a
$G$-invariant liftable $H$-structure $(B,\tau,\widetilde\tau)$, then any
$G$-invariant $\Mpc$-connection which is compatible with the liftable
$H$-structure is defined by a $G$-invariant principal connection
$\gamma$ on the principal $H$-bundle $B$. In the case where the manifold
is homogeneous for $G$, $M=G/H$, such a connection is of the form
\[
\alpha_g(L_{g*}X)=\nu(X)  \textrm{ for any } X\in \g
\]
where $\nu:\g\rightarrow \h$ is linear, vanishes on $\h$ and is $\Ad H$
equivariant.

\begin{definition}
If $M=G/K$ is a reductive homogeneous  manifold, i.e. when one can write $\g=\K+\p$
 with $\p$ an $\Ad K$ invariant 
subspace of $\g$ supplementary to $\K$, 
the \emph{reductive connection} one form $\alpha$ is defined on  the
$K$-principal bundle $G\stackrel{p}{\rightarrow} G/K$ by
\[
\alpha_g(L_{g*}X)=X_\K  \textrm{ for any } X\in \g
\]
where $U_\K$ denotes the projection of $U\in\g$ on $\K$ relatively to
$\g=\K\oplus \p$. The corresponding horizontal subspace of the tangent
space $T_gG=L_{g*}\g$ is given by $L_{g*}\p$ and is supplementary to the
vertical subspace $\ker p_{*g}=L_{g*}\K$. This connection $1$-form on
$G$ induces a $G$-invariant covariant derivative on $TM$ and on any
vector bundle associated to $G$. It also induces a $G$-invariant
connection on any principal bundle of the form $G\times_{K,\nu}H$ for
any group homomorphism $\nu: K\rightarrow H$, asking the horizontal
subspace at $\iota(g):=[g,1]$ to be $\iota_{*g}L_{g*}\p$.
\end{definition}

If $M=G/K $ is a symmetric space, it yields the unique linear connection
-- called the {\emph{ symmetric connection}} --
which is invariant under all symmetries. In the non-symmetric case, any
other $G$-invariant connection on the $K$-principal bundle
$G\stackrel{p}{\rightarrow} G/K$ is of the form
\[
\alpha_g(L_{g*}X)=X_\K +\lambda(X) \textrm{ for any } X\in \g
\]
where $\lambda: \g\rightarrow \K$ is a linear map, vanishing on $\K$ and
equivariant for the adjoint action of $K$.

If $(M=G/K, \omega, A)$ is  a pseudo-Hermitean reductive homogeneous 
manifold ($A={\Jtilde}$ i.e. $A^2=-\Id$), or a bi-Lagrangian reductive
homogeneous space ( i.e. $A^2=\id$), the linear connection induced by
the reductive connection preserves the symplectic  form and preserves
$A$ but may have torsion. It is without torsion if $G/K$ is symmetric.

%%%%%%%%%%%%%%%%%%%%%%%%%%%%%%%%%%%%%%%%%%%%%%%%%
%%%%%%%%%%%%%%%%%%%%%%%%%%%%%%%%%%%%%%%%%%%%%%%%%

\section{Symplectic Dirac operators  %on symplectic symmetric spaces
}

\subsection{ Definition and some spectral properties  of symplectic Dirac operators}
Consider a symplectic manifold $(M,\omega)$ with an $\Mpc$-structure
$(P,\phi)$ and an $\Mpc$-connection $\alpha=2{\tilde\eta}^*\beta_0+\phi^*\beta_1$
where $\beta_0$ is a principal $U(1)$ connection on $P^{1}(\eta)$ 
and  $\beta_1$ is a principal $Sp(V,\Omega)$ connection on
$Sp(M,\omega)$ corresponding to a linear connection $\nabla$ on
$M$ so that $\nabla\omega=0$. Recall that
$TM=P\times_{\Mpc, \sigma}V$ with $[p,v]=\phi(p)(v)$.

The {\emph{symplectic spinor bundle}} associated to $P$ is defined as 
\begin{equation}
\mathcal{S} = P\times_{\Mpc,\mathbf{U}} \mathcal{H}^{\pm\infty }
\end{equation}
where $\mathbf{U}$ denotes the $Mp^c$-representation on the space
$\mathcal{H}^{\infty }$ of smooth vectors in $\mathcal{H}$ or on its
dual $\mathcal{H}^{-\infty }$; its sections are the {\emph{symplectic
spinor fields.}}

The {\emph{symplectic Clifford multiplication}} is the map
\begin{equation}
TM\otimes \mathcal{S} \rightarrow  \mathcal{S} : 
\left[\phi(p),v\right] \otimes [p,f]  \mapsto   [p,Cl(v)f].
\end{equation}
This Clifford multiplication is parallel i.e.
\[
\nabla^{(\alpha)}_U(Cl(V)\varphi)=Cl(\nabla_UV)\varphi +
Cl(V)\nabla^{(\alpha)}_U \varphi
\]
for any spinor field $\varphi$ and  any  smooth vector fields $U,V$ on
$M$, $\nabla^{(\alpha)}$ denoting the covariant derivative associated to $\alpha$
 in the spinor bundles.

The {\emph{symplectic Dirac operator}} (associated to $P$ and $\alpha$)
is the operator acting on symplectic spinor fields, defined by 
\begin{equation}
 D^{(\alpha)} \varphi= \sum_a Cl(\mathbf{e}_a) \nabla^{(\alpha)}_{\mathbf{e}^a} \varphi= -\sum_{a,b}
\omega^{ab}Cl(\mathbf{e}_a)\nabla^{(\alpha)}_{\mathbf{e}_b}\varphi
\end{equation}
where $\mathbf{e}_a$ is any local frame of the tangent bundle,
$\omega^{ab}(x)$ denotes the coefficients of the inverse of the matrix
$\omega_{ab}(x):=\omega_x(\mathbf{e}_a(x)),\mathbf{e}_b(x))$, and
$\mathbf{e}^a$ is the dual frame defined by
$\omega(\mathbf{e}_a,\mathbf{e}^b) = \delta_a^b$.

Given a compatible positive almost complex structure $J$ on
$(M,\omega)$, 
one defines (following K. Habermann) a second first order
operator, the \emph{J-twisted symplectic Dirac operator}
 \begin{equation}
D^{(\alpha)}_J \varphi= \sum_a Cl(J\mathbf{e}_a) \nabla^{(\alpha)}_{\mathbf{e}^a} \varphi=\sum_{a,b}
g^{ab}Cl(\mathbf{e}_a)\nabla^{(\alpha)}_{\mathbf{e}_b}\varphi
\end{equation}
with $g^{ab}(x)$  the inverse of the matrix $g_{ab}(x):=g_{J
x}(\mathbf{e}_a(x),\mathbf{e}_b(x))=\omega_x(\mathbf{e}_a(x),J
\mathbf{e}_b(x))$, having chosen the $\Mpc$-connection $\alpha$ so that  the
induced linear connection preserves $J$ ($\nabla J=0$). This is the case
if and only if the $\Mpc$-connection is induced by a $\MUc$-connection
on the  principal $\MUc(V,\Omega,j)$--bundle $P_J:=\phi^{-1}(U(M,\omega,J))$ 
lying over the unitary frames. There
always exists a linear connection preserving $\omega$ and $J$; it may
have torsion, but one can always assume that the \emph{torsion vector}
($=\sum_{a,b}\omega^{ab}T^{\nabla}(\mathbf{e}_a,\mathbf{e}_b)$) vanishes. 

Given $J$,  the symplectic spinor bundle can also be written
\[
\mathcal{S} = P_J\times_{\MUc,\mathbf{U}} \mathcal{H}^{\pm\infty }.
\]
Since the action of $\MUc$ on the subspace of $\mathcal{P}ol
(V,j)\subset \mathcal{H}^{\pm\infty }$ consisting of polynomials (in the
Fock realization) preserves the degree, we can consider the dense
subspace of polynomial-valued spinor fields which are sections of 
\[
\mathcal{S}_J = P_J\times_{\MUc,\mathbf{U}} \mathcal{P}ol (V,j).
\]
\textbf{In order to have such a decomposition, we have assumed that the chosen 
compatible almost complex 
structure $J$  is positive}.
Now, in the presence of an almost complex structure $J$ it is convenient to
write derivatives in terms of their $(1,0)$ and $(0,1)$ parts. That is
we complexify $TM$ and then decompose $TM^{\C}$ into the $\pm i$
eigenbundles of $J$ which are denoted by $T'M$ and $T''M$. If $X$ is a
tangent vector then it decomposes into two pieces $X = X' + X''$ lying
in these two subbundles so $JX' = i X'$ and $JX'' = -i X''$. We can then
define
\[
\nabla^{'(\alpha)}_X := \nabla^{(\alpha)}_{X'}, \qquad \nabla^{''(\alpha)}_X := \nabla^{(\alpha)}_{X''}
\]
after extending $\nabla^{(\alpha)}$ by complex linearity to act on complex vector
fields. We have  defined in \cite{refs:CahGuttRaw} two partial Dirac
operators, the \emph{Dirac--Dolbeault symplectic operators} $D^{'(\alpha,J)}$ and $D^{''(\alpha,J)}$ by
\[
D^{'(\alpha,J)}\varphi =  \sum_a Cl(e_a) {\nabla^{'(\alpha)}}_{e^a} \varphi,\qquad
D^{''(\alpha,J)}\varphi =  \sum_a Cl(e_a) {\nabla^{''(\alpha)}}_{e^a} \varphi.
\]
Then
\[
D^{(\alpha)} = D^{'(\alpha,J)} + D^{''(\alpha,J)}, \qquad D^{(\alpha)}_J = -i D^{'(\alpha,J)}+ i D^{''(\alpha,J)}.
\]
We have proven in \cite{refs:CahGuttRaw} that $D^{''(\alpha,J)}$ is the adjoint of $D^{'(\alpha,J)}$
when the torsion vector of the linear connection induced by the $\MUc$ connection
vanishes;  we shall assume this is  the case from now on. (Recall that  a connection 
preserving $\omega$ and $J$ and with vanishing torsion vector
always exists).

The choice of  $J$ on $(M,\omega)$ allows us to split Clifford
multiplication into creation and annihilation parts.  Remark that the Clifford 
multiplication on  $ \H^{\pm\infty}(V,\Omega,j)$ splits as
\begin{equation}\label{defac}
cl(v)=c(v)-a(v) ~~\mathrm{where}~~  (c(v)f)(z):= \frac1{2\hbar}\langle z,v\rangle_j f(z)
~~\mathrm{and}~~ (a(v)f)(z)=(\partial_zf)(v).
\end{equation}
Consider 
$TM=P_J\times_{\MUc(V,\Omega,j),\sigma}V$,
$\mathcal{S}=P_J\times_{\MUc(V,\Omega,j)} \H^{\pm\infty}(V,\Omega,j)$ 
and define the operators:
\[
C_J\colon TM\otimes \mathcal{S} \to \mathcal{S} : (X=[p', v])\otimes 
(\psi=[p',f])\mapsto C_J(X)\psi:=[p', c(v) f];
\]
\[
A_J \colon TM\otimes \mathcal{S} \to \mathcal{S} : 
(X=[p', v])\otimes (\psi=[p',f])\mapsto A_J(X)\psi:=[p', a(v) f];
\]
this is well defined because $a(gv)Uf=Ua(v)f$ for any $(U,g)$ in
$\MUc(V,\Omega,j)$. We have 
$A_J(JX)=iA_J(X),~C_J(JX)=-iC_J(X),~h(A_J(X)\psi,
\psi')=h(\psi,C_J(X)\psi')$. Hence, having chosen a $\MUc$ connection $\alpha$ on $P_J$, we have 
\begin{equation}\label{D'D''}
D^{'(\alpha,J)}\varphi =  \sum_a C_J(e_a) \nabla^{(\alpha)}_{e^a} \varphi
= -\sum_{ab}\omega^{ab} C_J(e_a) 
\nabla^{(\alpha)}_{e_b} \varphi  
=  \sum_a C_J(e_a) {\nabla^{'(\alpha)}}_{e^a} \varphi,
\end{equation}
\begin{equation}\label{Dprime}
D^{''(\alpha,J)}\varphi =  - \sum_a A_J(e_a){ \nabla^{(\alpha)}}_{e^a} \varphi
=\sum_{ab}\omega^{ab} A_J(e_a) 
{\nabla^{(\alpha)}}_{e_b} \varphi   
=  - \sum_a A_J(e_a) {\nabla^{''(\alpha)}}_{e^a}\varphi.
\end{equation}
The space of polynomial spinor fields of  degree $\le q$
is the space of sections of the bundle
\[
\mathcal{S}_J^q: = P_J\times_{\MUc,\mathbf{U}} \mathcal{P}ol^q(V,j)=\oplus_{k=1}^q L\otimes S^k(T^{*(1,0)}M)
\]
where $\mathcal{P}ol^q(V,j)$ is the space of holomorphic polynomials of
degree $\le q$ on $(V,j)$, and where $L=P_J(\lambda)$ is the line bundle
 whose class characterises the isomorphism class of the $\Mpc$
structure. Observe that $D^{'(\alpha,J)}$ raises the  degree by 1 whilst $D^{''(\alpha,J)}$
lowers it by 1.

Given a differential operator $Op$ of order $k$ acting from the space of
sections of a vector bundle $E \stackrel{\pi}{\rightarrow}M$ to the space
of sections of $E' \stackrel{\pi'}{\rightarrow}M$, its principal symbol
$ps(Op)$ associates to any point $x\in M$ and any non-zero element
$\xi\in T_x^*M$ the linear endomorphism  of the fibres
\[
ps(Op)(x,\xi):E_x\rightarrow E'_x
: ps(Op)(x,\xi)f:=\frac{d^k}{dt^k} e^{-ith} Op(e^{ith}\psi)(x)\vert_{t=0}
\]
where $h$ is any function on $M$ so that $dh_x=\xi$ and $\psi$ is any
local section of $L$ with $\psi(x)=f$.
\begin{proposition}
The principal symbols of the operators $D^{(\alpha)}, D^{(\alpha)}_J, D^{'(\alpha,J)}$ and $D^{''(\alpha,J)}$ are given by
\begin{eqnarray}
  ps(D^{(\alpha)})(x,\xi)&=&iCl(\xi^\sharp)\qquad \qquad ps(D^{(\alpha)}_J)(x,\xi)=iCl(J\xi^\sharp)\\
 ps(D^{'(\alpha,J)})(x,\xi)&=&iC_J(\xi^\sharp)\qquad \qquad ps(D^{''(\alpha,J)})(x,\xi)=-iA_J(\xi^\sharp)
  \end{eqnarray}
with $\xi^\sharp \in T_xM$ defined so that $\omega_x(\xi^\sharp,\, \cdot
\,)=\xi$.

In particular, the symbol of the operator $D_q^{'(\alpha,J)}$, which is
the restriction of $D^{'(\alpha,J)}$ to $\mathcal{S}_J^q$ with values in
$\mathcal{S}_J^{q+1}$, is injective for all $q$. The  operator $D^{''(\alpha,J)}
D^{'(\alpha,J)}_q$, which is the restriction of $D^{''(\alpha,J)}D^{'(\alpha,J)}$ to $\mathcal{S}_J^q$, is
self-adjoint, elliptic,  and  preserves the  degree.
\end{proposition}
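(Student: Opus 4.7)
The plan is to compute the four principal symbols directly from the definition and then read off the statements about $D^{''(\alpha,J)}D^{'(\alpha,J)}_q$ as immediate consequences. For a first-order operator of the form $P=\sum_{a} A_a\,\nabla^{(\alpha)}_{\mathbf{e}_a}$, where $A_a$ is a pointwise bundle morphism, one checks from the stated definition of $ps$ that $ps(P)(x,\xi)=i\sum_a\xi(\mathbf{e}_a(x))\,A_a(x)$. Applied to the expression $D^{(\alpha)}=-\sum_{a,b}\omega^{ab}Cl(\mathbf{e}_a)\nabla^{(\alpha)}_{\mathbf{e}_b}$ this gives $ps(D^{(\alpha)})(x,\xi)=-i\sum_{a,b}\omega^{ab}\xi(\mathbf{e}_b)\,Cl(\mathbf{e}_a)$. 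Writing $\xi^\sharp=\sum_a c^a\mathbf{e}_a$ and imposing $\omega_x(\xi^\sharp,\mathbf{e}_b)=\xi(\mathbf{e}_b)$ forces $c^a=\sum_b\omega^{ba}\xi(\mathbf{e}_b)=-\sum_b\omega^{ab}\xi(\mathbf{e}_b)$ by antisymmetry of $\omega^{ab}$, so $Cl(\xi^\sharp)=-\sum_{a,b}\omega^{ab}\xi(\mathbf{e}_b)Cl(\mathbf{e}_a)$ and $ps(D^{(\alpha)})(x,\xi)=iCl(\xi^\sharp)$. The same calculation, with $Cl$ replaced respectively by $Cl\circ J$, $C_J$, or $-A_J$ (using the expressions (\ref{D'D''}) and (\ref{Dprime}) for $D^{'(\alpha,J)}$ and $D^{''(\alpha,J)}$), yields the other three formulas.

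For the injectivity of $ps(D^{'(\alpha,J)}_q)$, pick a local unitary frame and identify $\xi^\sharp$ at a point with a non-zero vector $v\in V$. Through (\ref{defac}), the symbol acts on the fibre $\mathcal{P}ol^q(V,j)$ as $i\,c(v)$, i.e.\ as multiplication by the non-zero holomorphic linear function $z\mapsto\frac{1}{2\hbar}\langle z,v\rangle_j$, which is injective on polynomials.

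Turning to $D^{''(\alpha,J)}D^{'(\alpha,J)}_q$, the principal symbol of the composition is
\[
ps(D^{''(\alpha,J)})(x,\xi)\circ ps(D^{'(\alpha,J)}_q)(x,\xi)
= A_J(\xi^\sharp)\,C_J(\xi^\sharp),
\]
which in a unitary frame is the operator $a(v)c(v)$ on $\mathcal{P}ol^q(V,j)$. A direct calculation from (\ref{defac}) gives the commutation rule $[a(v),c(w)]=\frac{1}{2\hbar}\langle w,v\rangle_j\,\id$, hence $a(v)=c(v)^*$ in the Fock inner product, and so $(a(v)c(v)f,f)=\|c(v)f\|^2>0$ whenever $f\neq 0$. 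Since $\mathcal{P}ol^q(V,j)$ is finite dimensional, this makes the symbol invertible for $\xi\neq 0$, proving ellipticity. Self-adjointness of $D^{''(\alpha,J)}D^{'(\alpha,J)}_q$ follows at once from the fact, recalled from \cite{refs:CahGuttRaw} and already used above, that under the vanishing-torsion-vector hypothesis $D^{''(\alpha,J)}$ is the formal adjoint of $D^{'(\alpha,J)}$. Preservation of degree is immediate since $D^{'(\alpha,J)}$ raises the polynomial degree by one and $D^{''(\alpha,J)}$ lowers it by one.

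The main obstacle, if one can be named, is purely bookkeeping: correctly tracking the signs coming from $\omega^{ab}=-\omega^{ba}$ when identifying $\xi^\sharp$, and the identification of sections of $\mathcal{S}^q_J$ with fibre polynomials through the unitary frame so that the operator-theoretic inequalities for $a(v)$, $c(v)$ in the Fock space transfer to the symbol computation. Every other step is routine.
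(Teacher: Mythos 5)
The paper states this proposition without a proof, so your argument fills a gap rather than duplicating one; the symbol computations and the ellipticity argument are the standard route and are essentially correct. Two small corrections. First, the commutation rule should read $[a(v),c(w)]=\frac{1}{2\hbar}\langle v,w\rangle_j\,\id$ (the arguments are transposed in your statement); this is harmless here because you only use it at $w=v$, where $\langle v,v\rangle_j$ is real and positive. Second, and more substantively, the word ``hence'' in ``hence $a(v)=c(v)^*$'' is not warranted: the commutation rule alone does not yield adjointness (it is an algebraic identity, not a statement about the inner product). The adjointness you need is an independent fact, and it is stated explicitly in the paper immediately before equations (\ref{D'D''}) and (\ref{Dprime}) in the form $h(A_J(X)\psi,\psi')=h(\psi,C_J(X)\psi')$; you should simply cite that. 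Finally, a bookkeeping remark: with the paper's definition of the principal symbol (no $1/k!$ normalisation), the symbol of the order-two composition is $2\,ps(D^{''(\alpha,J)})\circ ps(D^{'(\alpha,J)}_q)$, not the bare composition; the scalar factor is of course immaterial for injectivity and ellipticity, but worth being aware of when using this nonstandard convention.
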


\begin{theorem} \label{theor:KerDgen}
Consider a compact symplectic manifold $(M,\omega)$, with 
\begin{itemize}
\item an $\Mpc$-structure $(P,\phi)$;
\item a compatible positive almost complex structure $J$;
\item an $\MUc$ connection $\alpha$ on the $\MUc$-structure $P_J=\phi^{-1}
U(M,\omega,J)$ chosen so that the induced linear connection has
vanishing torsion vector.
\end{itemize}
Then, for each  integer $q\ge 0$, the operator $D^{(\alpha)}_q$, which
is the restriction of  $D^{(\alpha)}$ to $\mathcal{S}_J^q$   with values
in $\mathcal{S}_J^{q+1}$,  has a finite dimensional kernel.
\end{theorem}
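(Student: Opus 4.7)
The plan is to view $D^{(\alpha)}_q$ as a first-order differential operator between the \emph{finite-rank} smooth bundles $\mathcal{S}_J^q$ and $\mathcal{S}_J^{q+1}$, to establish that its principal symbol is fibrewise injective, and then to invoke standard elliptic theory for overdetermined operators on a compact manifold.

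The preparatory step is immediate: since $D^{(\alpha)} = D^{'(\alpha,J)} + D^{''(\alpha,J)}$ raises polynomial degree by at most one, $D^{(\alpha)}$ does map sections of $\mathcal{S}_J^q$ to sections of $\mathcal{S}_J^{q+1}$, and the preceding Proposition identifies the principal symbol as
\[
ps(D^{(\alpha)}_q)(x,\xi) = iCl(\xi^\sharp) = iC_J(\xi^\sharp) - iA_J(\xi^\sharp),
\]
viewed fibrewise as a linear map $\mathcal{P}ol^q(V,j) \to \mathcal{P}ol^{q+1}(V,j)$. The main substantive step is to verify that this map is injective for every $v := \xi^\sharp \ne 0$. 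Decomposing $f \in \mathcal{P}ol^q(V,j)$ into homogeneous components $f = \sum_{k=0}^{q} f_k$ and equating degrees in the identity $\sum_k (c(v) f_{k-1} - a(v) f_{k+1}) = 0$, the top-degree piece ($k = q+1$) yields $c(v) f_q = 0$. Since $c(v)$ is multiplication by the non-zero linear form $\frac1{2\hbar}\langle \cdot, v\rangle_j$ on $V$, it has no kernel on polynomials, so $f_q = 0$. Reading successively the degree $q, q-1, \ldots, 1$ components and invoking at each step the vanishing already established above shows $f_{q-1} = f_{q-2} = \cdots = f_0 = 0$, so the symbol is injective.

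With injectivity of the symbol in hand, the standard construction closes the argument: the second-order operator $(D^{(\alpha)}_q)^* D^{(\alpha)}_q$ on $\Gamma(\mathcal{S}_J^q)$, with formal adjoint taken in the $L^2$ structure inherited from $\mathcal{S}$, has fibrewise positive definite principal symbol, hence is elliptic, and its kernel is therefore finite-dimensional on the compact manifold $M$. Since the vanishing torsion assumption makes $D^{''(\alpha,J)}$ the formal adjoint of $D^{'(\alpha,J)}$, one checks in the usual way that $\ker D^{(\alpha)}_q = \ker (D^{(\alpha)}_q)^* D^{(\alpha)}_q$, completing the proof. The technical obstacle is really only the fibrewise injectivity of $Cl(\xi^\sharp)$: the collapse rests on the degree mismatch between the creation and annihilation parts $c(v)$ and $a(v)$, and it is precisely this feature --- special to the polynomial subspace --- that turns the non-elliptic global operator $D^{(\alpha)}$ into a useful overdetermined elliptic operator on each $\mathcal{S}_J^q$.
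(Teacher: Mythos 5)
Your proof is correct, but it proceeds by a genuinely different route from the paper's. The paper decomposes a spinor $\psi\in\Gamma(\mathcal{S}_J^q)$ into its homogeneous degree components $\psi=\sum_{k=0}^q\psi_k$ and reads off from $D\psi=0$ the system $D'\psi_q=0$, $D'\psi_{q-1}=0$, $D'\psi_{k-2}=-D''\psi_k$; it then invokes the ellipticity of the degree-preserving operator $D''D'$ (here using the torsion-vanishing hypothesis, which makes $D''$ the formal adjoint of $D'$) to get $\dim\ker D'<\infty$ on each degree piece, and finishes by a downward induction on $k$ noting that the preimage under $D'$ of a finite-dimensional space is finite-dimensional. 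You instead show directly that the full operator $D^{(\alpha)}_q:\Gamma(\mathcal{S}_J^q)\to\Gamma(\mathcal{S}_J^{q+1})$ is overdetermined elliptic: the fibrewise symbol $i(c(v)-a(v))$ is injective on $\mathcal{P}ol^q(V,j)$ because its top-degree part $c(v)$ is multiplication by a nonzero linear form, and a downward degree-collapse then kills $f_q,f_{q-1},\dots,f_0$ in turn. From there $(D^{(\alpha)}_q)^*D^{(\alpha)}_q$ is elliptic on the compact $M$ and $\ker D^{(\alpha)}_q=\ker(D^{(\alpha)}_q)^*D^{(\alpha)}_q$ is finite-dimensional. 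Both arguments rest on the same underlying point --- injectivity of the creation operator $c(v)$ on polynomials --- but yours packages it at the symbol level once and for all, avoiding the degree-by-degree induction; one small bonus is that your argument never actually uses the torsion-vector hypothesis (you mention it only in passing when identifying $D''$ as $(D')^{*}$, which is not needed for $\ker L=\ker L^{*}L$), so it establishes the statement for an arbitrary $\MUc$-connection.
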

\begin{proof}
Given any polynomial spinor field $\psi\in \mathcal{S}_J^q$, we decompose it by  degrees as 
\[
\psi= \psi_q+ \psi_{q-1} + \ldots +\psi_1+\psi_0.
\]
Then $D^{(\alpha)}\psi=0$ is equivalent to $D^{'(\alpha,J)}\psi_q=0, \,
D^{'(\alpha,J)}\psi_{q-1}=0$,
$D^{'(\alpha,J)}\psi_{k-2}=-D^{''(\alpha,J)}\psi_{k}$ for $2\le k\le q$
and $D^{''(\alpha,J)}\psi_1=0$. Now an element is in the kernel of
$D^{'(\alpha,J)}$ if and only if it is in the kernel of $
D^{''(\alpha,J)}D^{'(\alpha,J)}$ and this kernel is finite dimensional
since $ D^{''(\alpha,J)}D^{'(\alpha,J)}$ is self-adjoint, elliptic, and
acts on sections of a finite dimensional bundle over a compact manifold.
So $\psi_q$ and $\psi_{q-1}$ belong to a finite dimensional subspace and
inductively for decreasing $k$'s, each $\psi_k$ belongs to the finite
dimensional subspace whose image under $-D^{'(\alpha,J)}$ is the finite
dimensional subspace which is the image under $D^{''(\alpha,J)}$ of
possible $\psi_{k+2}$'s.
\end{proof}
The commutator of $D^{(\alpha)}$ and $D^{(\alpha)}_J$ yields a second order operator
introduced by Habermann 
\begin{equation}
\mathcal{P}^{(\alpha,J)} = i[D^{(\alpha)}_J,D^{(\alpha)}];
\end{equation}
it is now given by $\mathcal{P}^{(\alpha,J)} =2[D^{'(\alpha,J)} ,
D^{''(\alpha,J)}]$; it is elliptic \cite{refs:Habermanns} and  preserves
the degree.  Thus, on the dense subspace of polynomial spinor fields,
the operator $\mathcal{P}^{(\alpha,J)}$ is a direct sum of operators
acting on sections of finite rank vector bundles.

\medskip

More generally, we consider a field $A$ of endomorphisms of the tangent
bundle $TM$  of a symplectic  manifold $(M,\omega)$, such that
\[
\omega(AX,Y)=-\epsilon(A) \omega(X,AY) \qquad \forall X,Y\in \Gamma(M,TM),
\]
with $\epsilon(A)=\pm 1$ and such that there is a linear connection
$\nabla$ preserving $\omega$ and $A$, i.e. 
\[
\nabla\omega=0  \qquad \textrm{ and} \qquad \nabla A=0.
\]
We consider a $\Mpc$-connection $\alpha=\alpha_1+\alpha_0$ so that
$\alpha_1=\phi^*\beta_1$ where $\beta_1$ is the connection $1$-form on
$Sp(M,\omega)$ defined by the linear connection $\nabla$. We define a
new $A$-twisted Dirac operator which is a first order differential
operator acting on symplectic spinor fields:
\begin{equation}
D^{(\alpha)}_A\varphi := \sum_a Cl(A\mathbf{e}_a) \nabla^{(\alpha)}_{\mathbf{e}^a} \varphi
=-\sum_{abd} A^d_a\omega^{ab}Cl(\mathbf{e}_d) 
\nabla^{(\alpha)}_{\mathbf{e}_b} \varphi
\end{equation}
where   $\mathbf{e}_a$ is a local frame field for $TM$ and
$\mathbf{e}^a$ is defined by $\omega(\mathbf{e}_a,\mathbf{e}^b) =
\delta_a^b$.

The $\Mpc$ symplectic Dirac operator $D^{(\alpha)}$  corresponds  to $A=\id$, $\epsilon(A)=1$.
The operator ${D}^{(\alpha)}_J$ corresponds to   $A=J$, $\epsilon(A)=-1$ for $J$ a
positive compatible almost complex structure on $(M,\omega)$.

\subsection{ Symplectic Dirac operators on symmetric spaces}

Consider $(M=G/K,\omega, A)$, a pseudo-Hermitean or bi-Lagrangian
symmetric space, or more generally a symplectic symmetric space  with a 
$G$-invariant  parallel field $A$ of invertible endomorphisms so that
$A_x \in  \sp(T_xM,\omega_x)$ ($A^2=-\id$ in the pseudo-Hermitean case
and $A^2=\id$ in the bi-Lagrangian case). Observe that a pseudo-Hermitean symmetric space is 
pseudo-K\"{a}hlerian since the torsion vanishes and $\nabla J=0$.

We write as before $\g=\K\oplus \p$ and consider the non-degenerate
skewsymmetric $\Ad  K$ invariant  $2$-form $\Omega$ on $\p$ and the $\Ad
K$ invariant endomorphism $\widetilde{A}$ of $\p$ (which is a complex
structure $\jtilde$ on $\p$ if $A^2=-\id$  or gives a splitting into
$\pm1$-eigenvalues $\p=\p_+\oplus \p_-$ if $A^2=\id$). 
\begin{definition}\label{def:BAtilde}
We define  the symmetric non-degenerate
 $2$-form $B_{\widetilde{A}}$ on $\p$
\[
B_{\widetilde{A}}(X,Y)=-\Omega(X,{\widetilde{A}}^{-1} Y);
\]  it is $\Ad K$
invariant; we extend $B_{\widetilde{A}}$ to a an $\Ad K$ and $\ad \g$
invariant  symmetric bilinear form $\widetilde{B}_{\widetilde{A}}$ on
$\g$, using the fact that $[ \p,\p ]=\K$ through
\[
\widetilde{B}_{\widetilde{A}}(X, [Y,Z]):=0\qquad \textrm{ and } \qquad \widetilde{B}_{\widetilde{A}}([X,T], [Y,Z]):
= B_{\widetilde{A}}(X,[T,[Y,Z]])
\]
for any $X,Y,Z,T \in \p$. This is well defined since $B_{\widetilde{A}}(X,[T,[Y,Z]])=B_{\widetilde{A}}(Y,[Z,[X,T]])$
by  the usual symmetry properties of a  pseudo-Riemannian curvature tensor.
\end{definition}
{\bf{ We assume that there is a liftable $G$-invariant $K$-structure}} on
$(M,\omega)$ defined by the principal bundle
$G\stackrel{\pi}{\rightarrow} M$, the homomorphism
\[
\tau:K\rightarrow GL(\p,\Omega,\widetilde{A})\subset Sp(\p,\Omega) : k \mapsto \tau(k):=
\Ad(k)_{\vert_{\p}}
\]
with  $GL(\p,\Omega,\widetilde{A})=\{\, g\in Sp(V,\Omega) \suchthat
g\widetilde{A}=\widetilde{A}g\,\}$ and a lift 
\[
\widetilde{\tau}:K\rightarrow \Mpc(\p,\Omega, j) : k \mapsto \widetilde{\tau}(k).
\]
This is always true   in the pseudo-Hermitean case : when $A=\tilde{J}$,
$GL(\p,\Omega,\widetilde{A})=U(\p,\Omega,\jtilde)$   and we consider te lift 
$\widetilde{\tau}= F_{\jtilde,j}\circ \tau$ with $F_{\jtilde,j}:
U(V,\Omega,{\jtilde})\rightarrow\Mpc(V,\Omega,j)$ as in 
(\ref{Ftldejj}).

It is also always true  in the bi-Lagrangian case : when $A^2=\id$,
$GL(\p,\Omega,\widetilde{A})=\tau_{\p^+}GL(\p_+)$  and we have the lift $\widetilde{\tau}=
L\circ\Ad\vert_{p_+}$ with  $L: GL(\p_+) \rightarrow \Mpc(\p,\Omega,j)$
defined as in (\ref{eq:liftbilagrangian}).

Choose any $G$-invariant $Mp^c$-structure on $(M=G/K,\omega)$, then
\[
P_\chi:=G\times_{K,\chi\times \widetilde\tau} \Mpc(V,\Omega,j)
\]
corresponding to the choice of a character $\chi: K\rightarrow U(1)$.
Observe that the canonical line  bundle $P_\chi(\eta)$ is given by
\begin{equation}\label{eq:defchi'}
 P_\chi(\eta)=G\times_{K,\chi'} \C \quad \textrm{ with }\quad 
 \chi'(k)=\eta(\chi(k)\widetilde{\tau}(k))=\chi(k)^2\eta\circ \widetilde{\tau}(k).
\end{equation}

The spinor bundle is given by
\begin{equation}
\mathcal{S}  =  G\times_{K,\mathbf{U}\circ (\chi\times {\widetilde{\tau}})} 
  \mathcal{H}^{\pm\infty}
\end{equation}
and any spinor field  $\varphi$ is viewed as a $K$-equivariant function
$\widetilde{\varphi}$
\[
\widetilde{\varphi}:G\rightarrow \mathcal{H}^\infty \qquad \textrm{with }
\quad \varphi(\pi(g))=[g,\widetilde{\varphi}(g)]
\]
so that $\widetilde{\varphi}(gk)=
\mathbf{U}\left(\chi(k^{-1}){\widetilde{\tau}}(k^{-1})\right)
\left(\widetilde\varphi(g)\right)$. 

In this context of symmetric spaces, we {\bf{choose the symmetric connection}} $\alpha$ defined on the $K$-principal
bundle $G\stackrel{\pi}{\rightarrow} G/K$ :  
\begin{equation}\label{symmetricconnection}
\alpha_g(L_{g*}X)=X_{\K} \qquad \forall X \in \g
\end{equation}
where $U_\K$ denotes the projection of $U \in \g$ on $\K$ relatively to $\g=\K \oplus \p$. 
The vector $ L_{g*}X$ for $X\in\p$ is the horizontal lift of the vector $ \rho^M(g)_{*eK}(X)$
with $\p$ identified with $T_{eK}G/K$.
Hence
\begin{equation}
\left(\widetilde{\nabla^{(\alpha)}_{[g,X]} \varphi}\right)(g)
= \frac{d}{dt}_{\vert_0}\widetilde{\varphi}(g\exp(tX)) 
=\widetilde{X}_g\widetilde\varphi \quad \forall X\in\p
\end{equation}
where $\widetilde{X}$ is the left invariant vector field on $G$
corresponding to $X\in\p\subset\g$. 
Since this choice of connection is natural in the symmetric context, we shall from now on often drop
the $~^{(\alpha)}$ in all notations concerning Dirac operators and covariant derivatives.

The choice of  a basis $X_a$  of
$\p$ and the choice of an element $g\in G$ induce a frame at the point $x=gK$ given by
 $\mathbf{e_a}(x):=[g,
X_a]= \rho^M(g)_{*eK}(X_a)$ so that the Dirac operator has the global form
\begin{equation}\label{DiracSymmetric}
 D =-\sum_{a,b}
\Omega^{ab}cl(X_a)\otimes \widetilde{X_b}
\end{equation}
where $\Omega^{ab}$ denotes the
coefficients of the inverse of the matrix
$\Omega_{ab}:=\Omega(X_a,X_b)$. The $A$ twisted Dirac operator is given
by
\begin{equation}\label{AtwistedDiracSymmetric}
 D_A =-\sum_{a,b}
\Omega^{ab}cl(\widetilde{A}X_a)\otimes \widetilde{X_b}=-\sum_{acd}
\Omega^{ad}\widetilde{A}^c_acl(X_c)\otimes \widetilde{X_d}
=\sum_{cd}{B}^{cd}cl(X_c)\otimes \widetilde{X_d}
\end{equation}
where ${B}^{cd}$ denotes  the components of the inverse of the symmetric
matrix defined by ${B}_{ab}={B}_{\widetilde{A}}(X_a,X_b)
=-\Omega(X_a,{\widetilde{A}}^{-1}X_b)$ as in Definition
\ref{def:BAtilde}.

In the K\"{a}hlerian symmetric case  the Dirac--Dolbeault symplectic operators can be written 
\begin{equation}\label{D'sym}
D^{'(J)} 
= -\sum_{ab}\Omega^{ab} c(X_a) \otimes \widetilde{X_b}
= -\half \sum_{ab}\Omega^{ab} c(X_a) \otimes ( \widetilde{X_b}-i \widetilde{{\jtilde}X_b}) 
\end{equation}
\begin{equation}\label{D''sym}
D^{''(J)}
=\sum_{ab}\Omega^{ab} a(X_a)\otimes \widetilde{X_b} 
=\half \sum_{ab}\Omega^{ab} a(X_a) \otimes ( \widetilde{X_b}+i \widetilde{{\jtilde}X_b}) 
\end{equation}
with creation and annihilation operators $c$ and $a$ defined by (\ref{defac}).

\subsubsection{Parthasarathy formula for the commutator $[ D,D_A ]$}\label{ssection:Partha}

The commutator of $D$ and $D_A$ is given by 
\begin{eqnarray*}
(DD_A-D_AD)\widetilde\varphi  &= &  -\sum_{abcd=1}^{2n} \Omega^{ab}B^{cd} \left(cl(X_aX_c)\widetilde{X_b} \widetilde{X_d}\widetilde\varphi -cl(X_cX_a)\widetilde{X_d} \widetilde{X_b}\widetilde\varphi\right) \\
&= &  -\sum_{abcd=1}^{2n} \Omega^{ab}B^{cd} \left( \left(cl(X_aX_c)-cl(X_cX_a)\right)\widetilde{X_b} \widetilde{X_d}\widetilde\varphi\right. \\
&& \left.\qquad\qquad\qquad + cl(X_cX_a)\left(\widetilde{X_b} \widetilde{X_d}-\widetilde{X_d} \widetilde{X_b}\right)\widetilde\varphi\right)\\
&=& \frac{i}{\hbar} \sum_{bd=1}^{2n} B^{bd} \widetilde{X_b} \widetilde{X_d}\widetilde\varphi
- \sum_{abcd=1}^{2n} \Omega^{ab}B^{cd} cl(X_cX_a) \widetilde{[X_b,X_d]}\widetilde\varphi.
\end{eqnarray*}
Introducing a basis $\{\, W_1,\ldots,W_k\,\} $ of $\K$ we can write 
\begin{eqnarray*}
[X_b,X_d]&=&\sum_{rs=1}^k \widetilde{B}_{\widetilde{A}}([X_b,X_d],W_r)\widetilde{B}^{rs}W_s=-\sum_{rs=1}^k {B}_{\widetilde{A}}(X_d,[X_b,W_r])\widetilde{B}^{rs}W_s\\
&=&\sum_{rs=1}^kB_{\widetilde{A}}(\ad W_r (X_b),X_d)\widetilde{B}^{rs} W_s.
\end{eqnarray*}
where $\widetilde{B}^{rs}$ are the coefficients of the matrix which is
the inverse of the matrix
$\widetilde{B}_{pq}:=\widetilde{B}_{\widetilde{A}}(W_p,W_q)$. On the
other hand we have $ \ad W_r(X_b)=\sum_{dc=1}^{2n}B_{\widetilde{A}}(\ad
W_r (X_b),X_d)B^{dc}X_c$ so that
\[
\ad W_r\vert_{\p}=\sum_{abcd=1}^{2n}B_{\widetilde{A}}
(\ad W_r (X_b),X_d)B^{dc}\Omega^{ba}\underline {X_a} \otimes X_c.
\]
Remark that $\sum_{bd=1}^{2n}B_{\widetilde{A}}(\ad W_r
(X_b),X_d)B^{dc}\Omega^{ba}=\sum_{b=1}^{2n}(\ad W_r)_b^c\Omega^{ba}$ is
symmetric in $ac$ since $\ad W_r\vert_{\p}$ is in $\sp(\p,\Omega)$.
Hence, using formula (\ref{eq:defnu}), we have
\[
(cl\circ \nu)(\ad W_r\vert_{\p})
=-\frac{i\hbar}{2}\sum_{abcd=1}^{2n}B_{\widetilde{A}}
(\ad W_r (X_b),X_d)B^{dc}\Omega^{ba} cl(X_cX_a).
\]
The last term in the formula for the commutator  can be rewritten as
\begin{eqnarray*}
 \sum_{abcd=1}^{2n} \Omega^{ab}B^{cd} cl(X_cX_a) \widetilde{[X_b,X_d]}\widetilde\varphi&=&\sum_{abcd=1}^{2n} \Omega^{ab}B^{cd} 
\sum_{rs=1}^kB_{\widetilde{A}}(\ad W_r (X_b),X_d)\widetilde{B}^{rs}cl(X_cX_a)\widetilde{W_s}\widetilde\varphi\\
&=&-2\frac{i}{\hbar}\sum_{rs=1}^k  \widetilde{B}^{rs} (cl\circ  \nu)(\ad W_r\vert_{\p})\widetilde{W_s}\widetilde\varphi
\end {eqnarray*}
so that
\begin{equation}
(DD_A-D_AD)\widetilde\varphi 
= \frac{i}{\hbar} \sum_{bd=1}^{2n} B^{bd} \widetilde{X_b} 
\widetilde{X_d}\widetilde\varphi +2\frac{i}{\hbar}\sum_{rs=1}^k  
\widetilde{B}^{rs} (cl\circ  \nu)(\ad W_r\vert_{\p})
\widetilde{W_s}\widetilde\varphi.
\end{equation}
We can introduce the Casimir element $\Omega_\g^{\widetilde{B}}$ in the
center of the universal enveloping algebra of $\g$ defined by the
invariant symmetric $2$-form $\widetilde{B}_{\widetilde{A}}$; since
$\{W_1,\ldots, W_k,X_1,\ldots X_{2n}\}$ is a basis of $\g=\K+\p$ and
since $\K$ and $\p$ are $\widetilde{B}_{\widetilde{A}}$- orthogonal, we
have 
\begin{equation}\label{eq:omegatildeB}
\Omega_\g^{\widetilde{B}}=\sum_{bd=1}^{2n} B^{bd} {X_b} 
\cdot {X_d}+\sum_{rs=1}^k  \widetilde{B}^{rs} W_r \cdot W_s.
\end{equation}
Similarly  the Casimir element $\Omega_\K^{\widetilde{B}}$ in the center of the
universal enveloping algebra of $\K$ is defined by the invariant
symmetric $2$-form $\widetilde{B}_{\widetilde{A}}$ restricted to $\K$;
so that 
\begin{equation}\label{eq:omegaB}
\Omega_\K^{\widetilde{B}}=\sum_{rs=1}^k  \widetilde{B}^{rs} W_r \cdot W_s.
\end{equation}
Those act as $G$-invariant differential operators
${\widetilde{\Omega_\g^{\widetilde{B}}}}$ and
${\widetilde{\Omega_\K^{\widetilde{B}}}}$ on the space of spinors (viewed as
functions on $G$). Spinors are equivariant functions so that for any
$W\in\K$ we have 
\[
\left(\widetilde{W}\widetilde\varphi\right)(g)
=-\mathbf{U}_*(\chi_*(W)+{\widetilde{\tau}}_*(W))
\left(\widetilde\varphi(g)\right).
\]
Since 
${\mathbf{U}}_*(X)= cl\left(\nu ( \sigma_*(X))\right)
+ \half \eta_*(X)\id$ by equation (\ref{Ustar}) and 
$\sigma_*{\widetilde{\tau}}_*(W)=\ad W\vert_\p$ we have
\begin{eqnarray*}
\left(\widetilde{W}\widetilde\varphi\right)(g)&=&-\left(\chi_*(W)+\half \eta_* {\widetilde{\tau}}_*(W)+  (cl\circ \nu)(\ad W\vert_\p) \right)      \left(\widetilde\varphi(g)\right)\\
&=&-\left(\half \chi'_*(W)+  (cl\circ \nu)(\ad W\vert_\p) \right)      \left(\widetilde\varphi(g)\right),
\end{eqnarray*}
where $\chi'$ is the character defining the canonical bundle
$P_\chi(\eta)$ associated to the $\Mpc$-structure $P_\chi$. Hence we have
\begin{eqnarray*}
(DD_A-D_AD)\widetilde\varphi &&\nonumber\\
&& \kern-1in=\frac{i}{\hbar} ({\widetilde{\Omega_\g^{\widetilde{B}}}}-{\widetilde{\Omega_\K^{\widetilde{B}}}})\widetilde\varphi 
-2\frac{i}{\hbar}\sum_{rs=1}^k  \widetilde{B}^{rs} (cl\circ  \nu)(\ad W_r\vert_{\p})\left(\half \chi'_*(W_s)+  (cl\circ \nu)(\ad W_s\vert_\p) \right) \widetilde\varphi\nonumber \\
 & & \kern-1in=\frac{i}{\hbar} {\widetilde{\Omega_\g^{\widetilde{B}}}} \widetilde\varphi - \frac{i}{\hbar}\sum_{rs=1}^k  \widetilde{B}^{rs} \left(\half \chi'_*(W_r)+  (cl\circ \nu)(\ad W_r\vert_\p) \right)\left(\half \chi'_*(W_s)+  (cl\circ \nu)(\ad W_s\vert_\p) \right) \widetilde\varphi \nonumber \\
 & &\kern-1in\qquad\qquad - \frac{2i}{\hbar}\sum_{rs=1}^k  \widetilde{B}^{rs} (cl\circ \nu)(\ad W_r\vert_\p)\left(\half \chi'_*(W_s)+  (cl\circ \nu)(\ad W_s\vert_\p) \right) \widetilde\varphi.
 \end{eqnarray*}
 \begin{proposition}\label{prop:Parth}
Consider a symplectic symmetric space  $(M=G/K,\omega)$ endowed with a
$G$-invariant   field $A$ of invertible endomorphisms so that $A_x \in 
\sp(T_xM,\omega_x)$  and assume that there is a liftable $G$-invariant
$K$-structure on $(M,\omega)$. Consider the $G$ invariant
$\Mpc$-structure associated to the character $\chi$ of $K$ and the
symmetric connection. With $\widetilde{B}_{\widetilde{A}}$ defined as in \ref{def:BAtilde}, with  $\chi'$ the character corresponding to the canonical line bundle
as in (\ref{eq:defchi'}),
and  with the notations above, the commutator of the
Dirac operator $D$ and the $A$ twisted Dirac operator $D_A$ has the form:
 \begin{eqnarray}
 (DD_A-D_AD)\widetilde\varphi 
 &=& \Bigl( \frac{i}{\hbar} {\widetilde{\Omega_\g^{\widetilde{B}}}} - \frac{3i}{\hbar}(cl\circ \nu\circ\ad\vert_\p)(\Omega_\K^{\widetilde{A}}) -\frac{i}{4\hbar}\sum_{rs=1}^k  \widetilde{B}^{rs}\chi'_*(W_r)\chi'_*(W_s)\Bigr.\nonumber \\
 & &\qquad\qquad\Bigl. -\frac{2i}{\hbar}  \sum_{rs=1}^k  \widetilde{B}^{rs} \chi'_*(W_r)(cl\circ \nu\circ \ad\vert_\p)(W_s)\Bigr)\widetilde\varphi\\
 &=&  \frac{i}{\hbar}\Bigl( {\widetilde{\Omega_\g^{\widetilde{B}}}} + (cl\circ \nu\circ\ad\vert_\p)(\Omega_\K^{\widetilde{A}})-4(cl\circ \nu\circ\ad\vert_\p +\frac{1}{4} \chi'_*)(\Omega_\K^{\widetilde{A}})\Bigr)\widetilde\varphi.
\end{eqnarray}
\end{proposition}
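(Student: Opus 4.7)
The plan is to carry out a direct algebraic calculation of $DD_A\widetilde\varphi - D_AD\widetilde\varphi$ starting from the global formulas \eqref{DiracSymmetric} and \eqref{AtwistedDiracSymmetric}. These are valid because, under the symmetric connection \eqref{symmetricconnection}, the frame $\mathbf{e}_a(gK)=[g,X_a]$ is parallel along each left invariant field $\widetilde{X_b}$, so $\nabla^{(\alpha)}_{\widetilde{X_b}}\varphi$ is represented on $\widetilde\varphi$ by $\widetilde{X_b}\widetilde\varphi$. I would expand both compositions as quadruple sums in $a,b,c,d$ and then add and subtract $cl(X_cX_a)\widetilde{X_b}\widetilde{X_d}\widetilde\varphi$, so as to isolate the symplectic Clifford commutator $cl(X_aX_c)-cl(X_cX_a)=\frac{i}{\hbar}\Omega(X_a,X_c)\mathrm{Id}$ from the Lie bracket $[\widetilde{X_b},\widetilde{X_d}]=\widetilde{[X_b,X_d]}$. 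After contracting against $\Omega^{ab}B^{cd}$ the Clifford piece collapses to $\frac{i}{\hbar}\sum_{bd}B^{bd}\widetilde{X_b}\widetilde{X_d}\widetilde\varphi$.

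For the bracket piece, since $[X_b,X_d]\in\K$, I would expand it in the basis $\{W_r\}$ via $\widetilde{B}$-duality: $[X_b,X_d]=\sum_{rs}\widetilde{B}^{rs}B_{\widetilde{A}}(\ad W_r(X_b),X_d)W_s$. The combination $\sum_{abcd}\Omega^{ab}B^{cd}B_{\widetilde{A}}(\ad W_r(X_b),X_d)\,cl(X_cX_a)$ is symmetric in $a,c$ and, since $\ad W_r|_\p\in\sp(\p,\Omega)$, Lemma \ref{lemma:nu} identifies it with a constant multiple of $(cl\circ\nu)(\ad W_r|_\p)$. This yields the intermediate identity
\[
(DD_A-D_AD)\widetilde\varphi=\frac{i}{\hbar}\sum_{bd}B^{bd}\widetilde{X_b}\widetilde{X_d}\widetilde\varphi+\frac{2i}{\hbar}\sum_{rs}\widetilde{B}^{rs}(cl\circ\nu)(\ad W_r|_\p)\widetilde{W_s}\widetilde\varphi.
\]

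Next I would complete the $\p$-sum to the full $\g$-Casimir: since $\p$ and $\K$ are $\widetilde{B}_{\widetilde{A}}$-orthogonal, \eqref{eq:omegatildeB} and \eqref{eq:omegaB} give $\sum_{bd}B^{bd}\widetilde{X_b}\widetilde{X_d}=\widetilde{\Omega_\g^{\widetilde{B}}}-\widetilde{\Omega_\K^{\widetilde{B}}}$ as operators on equivariant spinors. To evaluate $\widetilde{W}\widetilde\varphi$ for $W\in\K$, I would invoke $K$-equivariance of $\widetilde\varphi$ together with Lemma \ref{lemma:numeta} and the identity $\chi'_*=2\chi_*+\eta_*\widetilde\tau_*$ implicit in \eqref{eq:defchi'}, to get $\widetilde{W}\widetilde\varphi=-(\tfrac12\chi'_*(W)+(cl\circ\nu)(\ad W|_\p))\widetilde\varphi$. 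Substituting this into both the $\widetilde{\Omega_\K^{\widetilde{B}}}$ piece (iteratively, using that the two $\K$-factors commute modulo a $\widetilde{B}^{rs}$-symmetric contraction) and the $\widetilde{W_s}\widetilde\varphi$ of the bracket term, then expanding, produces a pure $\chi'_*\chi'_*$ term, a pure $(cl\circ\nu\circ\ad|_\p)^2$ term, and two mixed terms, which reassemble into the first displayed formula of the statement. The second form is merely a regrouping in which the three $(cl\circ\nu\circ\ad)/\chi'_*$ pieces combine into $-4(cl\circ\nu\circ\ad|_\p+\tfrac14\chi'_*)(\Omega_\K^{\widetilde{A}})$.

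The main obstacle is purely bookkeeping, not conceptual: one must check that the $(cl\circ\nu\circ\ad|_\p)^2$ contributions, coming with coefficient $-1$ from $-\widetilde{\Omega_\K^{\widetilde{B}}}$ and with coefficient $-2$ from the bracket term after substitution of $\widetilde{W_s}\widetilde\varphi$, combine to give the prefactor $-3$ stated, while the prefactors $-\tfrac14$ (pure $\chi'_*$ square) and $-2$ (mixed) emerge correctly. No tool beyond Lemmas \ref{lemma:nu} and \ref{lemma:numeta}, the definitions of the two Casimirs, and \eqref{eq:defchi'} is required.
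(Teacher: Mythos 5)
Your proposal reproduces the paper's proof step for step: same quadruple-sum expansion, same add-and-subtract of $cl(X_cX_a)\widetilde{X_b}\widetilde{X_d}$, same use of the Clifford commutator and the Lie bracket to reach the intermediate identity, same $\widetilde{B}_{\widetilde{A}}$-duality expansion of $[X_b,X_d]$ into the $\K$-basis, the same completion of the $\p$-sum to $\widetilde{\Omega_\g^{\widetilde{B}}}-\widetilde{\Omega_\K^{\widetilde{B}}}$, the same $K$-equivariance formula $\widetilde{W}\widetilde\varphi=-(\tfrac12\chi'_*(W)+(cl\circ\nu)(\ad W|_\p))\widetilde\varphi$ via Lemma \ref{lemma:numeta}, and the same bookkeeping of the coefficients $-1$ and $-2$ combining to $-3$. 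This is essentially the paper's argument.
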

This clearly simplifies drastically when the $\Mpc$-structure comes from
a metaplectic structure (i.e.{} $\chi'=0$).
%%%%%%%%%%%%%

\subsection{The example of $\mathbb{C} P^n$} \label{sect:spectrum}

In this section, we shall illustrate the construction of the homogeneous
$Mp^c$-structures, the invariant symplectic Dirac operators $D, D_J,
D^{'(J)}, D ^{''(J)} $ and the elliptic operator $\mathcal{P}^{(J)}$ on
the complex projective spaces.  We shall use the standard invariant
complex structure on $\C P^n$ and thus we shall drop the superscript
$^{(J)}$.
 We shall compute the eigenvalues of $\mathcal{P}$ and the kernel of $D ^{'}$.
The case of $\C P^1$ was treated by Brasch, Habermann and Habermann in 
\cite{refs:BraschHabermanns}, using  metaplectic structures.  It was extended
in higher odd dimensions  using metaplectic structure 
by Christian Wyss  in his Diplomarbeit at the Universit\"at Bremen in 2003.
The operators $D^{'}$ and $ D ^{''}$  were also explored on $\C P^1$  by
Korman \cite{refs:Korman}, where they are called symplectic Dolbeault
operators.
%%%%%%%%%%%%%%%%%%%%%%

\subsubsection{Homogeneous $Mp^c$-structures and spinor fields on $\C P^n$}

The complex projective space $\mathbb{C}P^n$ has a natural structure of 
symmetric space. Viewing $\mathbb{C}P^n$ as the set of complex lines in
$\C^{n+1}$, we have ${\mathbb{C}}P^n= \left( \C^{n+1}\setminus\{ 0\}
\right) /{\sim}$ where the equivalence relation is defined by
$z=(z^0,\ldots, z^{n})\sim z'=(z^{'0},\ldots, z^{'n})$ if and only if
$z'=\lambda z$ for a non-vanishing $\lambda\in \C$. We denote by $[z]$
the equivalence class of $z\in  \C^{n+1}\setminus\{ 0\}$. The group
$SU(n+1)$ acts on $\mathbb{C}P^n$ through $A\cdot [z]:=[Az]$. This
action is transitive. The stabiliser $K$ of the point $[1,0,\ldots,0]$
is isomorphic to $U(n)$:
\[ K:= \left\{{\tilde{q}}(A):=\left(\begin{array}{cc} \det A^{-1}&0 
\\ 0&A \end{array}\right) \,\middle|\, A\in U(n)\right\}\subset SU(n+1).
\]
Observe that $K$ is the subgroup of $SU(n+1)$ of the elements which are
stable under the involutive automorphism $\sigma$ of $SU(n+1)$ defined
by  
\[
\sigma(g)=SgS^{-1}\qquad \textrm{ with }\quad
S=\left(\begin{array}{cc} -1&0\\0&\id_n\end{array}\right).
\]
This gives
 \[
\mathbb{C}P^n= SU(n+1)/ K
\]
the structure of a symmetric space.
The Lie algebra $\su(n+1)$ decomposes  into $\pm 1$
eigenspaces for the differential $\widetilde\sigma={\sigma}_*$ as 
\[
\su(n+1)=\K \oplus \p\quad {\textrm{ where }}\quad
\K:= \left\{ \left(\begin{array}{ll}
a & 0 \\
0 & A
\end{array}\right) \, \middle\vert \, a\in i\R,  A \in \u(n), a+ \Tr(A)=0\right\}
\]
is the Lie algebra of $K$
and  
\[
 \p:=\left \{ q(\alpha):=\left(\begin{array}{cc}
0 & -\bar{\alpha}^T \\
\alpha & 0
\end{array}  \right)\, \middle\vert \,  \alpha \in \C^n \right\}\stackrel{q}{\simeq}\C^n.
\]
Let $\widetilde{B}$ be the  invariant positive definite symmetric two form
on $\su(n+1)$ defined by $\widetilde{B}(x,y):= -\frac{1}{2}Tr(xy)$; its
restriction $B$ to $\p$ defines an $Ad(K)$-invariant metric 
\[
B(q(\alpha),q(\beta))={\mathcal{R}}e({\alpha}^T\bar{\beta})
\]
which coincides with the real part of the standard Hermitean form on $\C^n$. The
$Ad(K)$-invariant complex structure $j$ on $\p$ is given by 
\[
 j :=\Ad w \quad\textrm{ with }\quad   w=
\left(\begin{array}{cc}
e^{\frac{-ni\pi}{2(n+1)}}& 0  \\
0 & e^{\frac{i\pi}{2(n+1)}}\Id_n  
\end{array}\right) \quad\textrm{ so that }\quad j(q(\alpha))=q(i\alpha)
\]
coincides with the multiplication by $i$ on $\C^n$. The
$\Ad(K)$-invariant symplectic form on $\p$ is given by 
$\Omega(\cdot,\cdot)=B(j\cdot,\cdot)$ so that 
\[
\Omega(q(\alpha),q(\beta)) = -\Im ({\alpha}^T\bar{\beta})
\]
coincides with minus the imaginary part of the standard Hermitean form on $\C^n$.

The tangent bundle to $\C P^n$ is canonically identified with
\[ 
T\C P^n\simeq SU(n+1)\times_{K,\Ad\vert_\p}\p 
\]
and $B, j$ induce an $SU(n+1)$-invariant K\"{a}hler structure $(g,J)$ on
$\C P^n$ as before:
\[
g_{ g'K}([g',X],[g',Y])= B(X,Y),\qquad J_{g'K}([g',X]):= [g',j(X)],
\]
for $g' \in SU(n+1), X,Y \in \p$. The corresponding symplectic form is
the  K\"{a}hler form
\[
\omega_{g'K}([g',X],[g',Y]):=\Omega(X,Y).
\]
The unitary frame bundle takes the form
\begin{equation}
U(M,\omega,J)=SU(n+1)\times_{K,\tau}U(\p,\Omega,j) 
\textrm{ with }\tau(B)(U):= \Ad(B)\vert_{\p}\circ U.
\end{equation}
For $\C P^n$ we have $ \Ad(\tilde{q}(A))\vert_{\p}=\det(A)A \in U(\p,\Omega,j)$ for
$A\in U(n)$.

$Mp^c$-structures depend on the choice of a character of $K\simeq U(n)$. Such
a  character is of the form $\chi_k(A):=\det^k(A)$ for $A\in U(n)$ and
the corresponding homogeneous $Mp^c$-structure is 
\begin{equation}\label{Pchik}
P_{\chi_k}:
= SU(n+1)\times_{U(n),\det^k \times \tau\circ {\tilde{q}}} Mp^c(\p,\Omega,j),
\end{equation}
for the map $\det^k \times \tau\circ {\tilde{q}} : U(n) \rightarrow
MU^c(\p,\Omega,j) : B\mapsto (\det^k(B),\det(B)B)$.

The spinor bundle $\mathcal{S}_{\chi_k}$ associated to the
$\Mpc$-structure $P_{\chi_k}$ is given by
\begin{equation}
\mathcal{S}_{\chi_k} = SU(n+1)\times_{U(n),\mathbf{U}\circ 
(\chi_k\times \tau\circ {\tilde{q}})} 
  \mathcal{H}^\infty.
\end{equation}
A spinor field is  a section of this bundle; equivalently it is a
$U(n)$--equivariant map 
\begin{eqnarray}
&&\varphi:SU(n+1) \rightarrow \mathcal{H}^\infty 
\qquad {\textrm{ such that }}\\
&&(\varphi(A\,\tilde{q}(B)))(z)
 =\left(\mathbf{U} \left(\chi_k(B^{-1}),\tau(B^{-1})\right)\varphi(A)\right)(z)
 = {\det}^k(B^{-1})\varphi(A)(\det(B)Bz),\nonumber 
\end{eqnarray} 
for all $A\in SU(n+1)$ and $B\in U(n)$ and $z\in \C^n$.

The space $\mathcal{H}^\infty$ contains the dense subspace
$E:=\oplus_{l\in \mathbb{N}} S^l(\C^n)$ where $S^l(\C^n)$ is the space
of homogeneous polynomials of degree $l$ in $n$ complex variables and
each subspace  $S^l(\C^n)$ is stable under the action of the group
$MU^c(\p,\Omega,j)$. Hence, the space of sections of the spinor bundle
decomposes as the sum of  spaces of $K$-equivariant functions
$\varphi:SU(n+1) \rightarrow S^l(\C^n)$. We will denote this space by
$\mathcal{E}_{\chi_k}^l$. Remark that the equivariance of the map $\varphi$ takes
the form
\[
(\varphi(A\tilde{q}(B)))(z)= {\det}^{l-k}(B)\varphi(A)(Bz).
\]
The commutator  $\mathcal{P}^{(J)} =2[D^{'(J)} , D^{''(J)}]= i[D_J,D]$
preserves this decomposition on spinors.

We now decompose the space of spinor fields in $\mathcal{E}_{\chi_k}^l$ under the
action of $SU(n+1)$, using  the Peter--Weyl Theorem and we get

\begin{lemma}
Let $I$ be the set of highest weights parametrizing the irreducible
representations of $SU(n+1)$. The space $\mathcal{E}_{\chi_k}^l$ decomposes as
the sum
\begin{equation}
\sum_{\lambda \in I} V_{\lambda}\otimes \Hom_{K,(k)}(V_{\lambda},S^l(\C^n)),
\end{equation}
where $(V_{\lambda}, \pi_{\lambda})$ is an irreducible representation of
$SU(n+1)$ with highest weight $\lambda$ and the space
$\Hom_{K,(k)}(V_{\lambda},S^l(\C^n))$ denotes the space of
$K$-intertwining homomorphisms from $(V_{\lambda},\pi_{\lambda}\vert_K)$ to
$(S^l(\C^n), \rho')$ with 
\begin{equation}\label{rho'}
(\rho'(\tilde{q}(B))f)(z):= {\det}^{k-l}(B)f(B^{-1}z)
\end{equation}
A tensor $v\otimes L \in
V_{\lambda} \otimes Hom_{K,(k)}(V_{\lambda},S^l(\C^n))$ corresponds to a function
$\varphi:SU(n+1)\rightarrow S^l(\C^n):g \mapsto
L(\pi_{\lambda}(g^{-1})v)$. 
\end{lemma}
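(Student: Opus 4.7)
The plan is to recognise $\mathcal{E}_{\chi_k}^l$ as the space of smooth sections of the homogeneous vector bundle $SU(n+1)\times_K S^l(\C^n)$, where $K\simeq U(n)$ acts on $S^l(\C^n)$ via the representation $\rho'$, and then apply the Peter--Weyl theorem together with Frobenius reciprocity.

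First, I would check the equivariance. An element of $\mathcal{E}_{\chi_k}^l$ is by construction a smooth map $\varphi:SU(n+1)\to S^l(\C^n)$ satisfying
\[
(\varphi(A\,\tilde{q}(B)))(z) = {\det}^{l-k}(B)\,\varphi(A)(Bz).
\]
A direct comparison with the definition of $\rho'$ in (\ref{rho'}) shows that this is exactly the condition $\varphi(A\,\tilde{q}(B)) = \rho'(\tilde{q}(B)^{-1})\,\varphi(A)$, so $\mathcal{E}_{\chi_k}^l$ is the $C^\infty$ induced representation $\mathrm{Ind}_K^{SU(n+1)}(S^l(\C^n),\rho')$, with $SU(n+1)$ acting by left translation.

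Second, I would invoke the Peter--Weyl decomposition. Since $SU(n+1)$ is compact, the algebraic direct sum $\bigoplus_{\lambda\in I}V_\lambda\otimes V_\lambda^*$ is dense in $C^\infty(SU(n+1))$ as a representation of $SU(n+1)\times SU(n+1)$, with the first factor acting by left translation and the second by right translation. Tensoring with $S^l(\C^n)$ (where right translation acts trivially and $\rho'$ acts on the $S^l(\C^n)$ slot), one obtains
\[
C^\infty\bigl(SU(n+1),S^l(\C^n)\bigr) = \bigoplus_{\lambda\in I} V_\lambda\otimes \bigl(V_\lambda^*\otimes S^l(\C^n)\bigr),
\]
and the $K$-equivariance condition that defines $\mathcal{E}_{\chi_k}^l$ picks out exactly the $K$-invariants of the diagonal $K$-action on the second tensor factor, i.e.\ $\bigl(V_\lambda^*\otimes S^l(\C^n)\bigr)^K = \Hom_{K,(k)}\bigl(V_\lambda,S^l(\C^n)\bigr)$.

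Third, I would identify the correspondence explicitly. Under the standard identification $V_\lambda^*\otimes S^l(\C^n) \simeq \Hom(V_\lambda,S^l(\C^n))$, a tensor $v\otimes L$ is sent to the matrix coefficient $g\mapsto L(\pi_\lambda(g^{-1})v)$; one checks by direct computation that replacing $g$ by $g\,\tilde{q}(B)$ gives $L\bigl(\pi_\lambda(\tilde{q}(B)^{-1})\pi_\lambda(g^{-1})v\bigr)$, which equals $\rho'(\tilde{q}(B)^{-1})L(\pi_\lambda(g^{-1})v)$ precisely when $L$ is $K$-intertwining, confirming the formula given in the statement.

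There is no real obstacle here beyond bookkeeping: the only thing that needs care is to match the left/right conventions so that $K$-equivariance on $\varphi$ corresponds to $K$-intertwining of $L$, and to note that although Peter--Weyl is usually stated in $L^2$, the identification on smooth vectors is the desired algebraic direct sum, which is all that is needed since the operators $D$, $D'$, $D''$ and $\mathcal{P}$ act on smooth (in fact polynomial-valued) spinor fields.
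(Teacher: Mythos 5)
Your proposal is correct and takes essentially the same route the paper intends: the paper itself gives no written proof, saying only that the decomposition follows "using the Peter--Weyl Theorem," so you are filling in exactly the implied Frobenius-reciprocity bookkeeping. Your verification of the equivariance condition $\varphi(A\,\tilde{q}(B)) = \rho'(\tilde{q}(B)^{-1})\varphi(A)$, the identification of $\bigl(V_\lambda^*\otimes S^l(\C^n)\bigr)^K$ with $\Hom_{K,(k)}(V_\lambda,S^l(\C^n))$, and the explicit check that $g\mapsto L(\pi_\lambda(g^{-1})v)$ intertwines exactly when $L$ does, are all accurate. The one place that deserves the slight caveat you already flag is the equality $C^\infty\bigl(SU(n+1),S^l(\C^n)\bigr)=\bigoplus_\lambda V_\lambda\otimes\bigl(V_\lambda^*\otimes S^l(\C^n)\bigr)$, which should be read as a dense algebraic sum rather than an honest equality of spaces; since the statement of the lemma itself uses the symbol $\sum$ and the subsequent analysis only uses the $SU(n+1)$-isotypic pieces, this is harmless.
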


\begin{theorem}
For each $\lambda \in I$ the  Dirac operators $D, D_J, D ^{'}$ and
$D^{''}$  preserve the spaces
\[
\oplus_l V_{\lambda} \otimes \Hom_{K,(k)}( V_{\lambda}, S^l(\C^n));
\]
they induce operators on $\oplus_l \Hom_{K,(k)}( V_{\lambda}, S^l(\C^n))$
: $D^{(\lambda,k)}, D^{(\lambda,k)}_J, D^{'(\lambda,k)}$ and $D^{''(\lambda,k)}$.
 For $L\in
\oplus_l\Hom_{K,(k)}(V_{\lambda},S^l(\C^n))$  we have
\begin{equation}
D^{(\lambda,k)}L:=\sum_{ij}\Omega^{ij}cl(X_i)\circ L\circ \pi_{\lambda*}(X_j),
\qquad D^{(\lambda,k)}_JL:=-\sum_{ij} B^{ij}cl(X_i)\circ L\circ \pi_{\lambda*}(X_j).
\end{equation}
\begin{equation}\label{D'proj}
 D^{'(\lambda,k)}L:= \sum_{rs}\Omega^{rs} c(X_r)  \circ L\circ \pi_{\lambda*}(X_s)=
\half \sum_{rs}\Omega^{rs} c(X_r)  \circ L\circ \pi_{\lambda*}(X_s-i  jX_s)
 \end{equation}
\begin{equation}\label{D''proj}
 D^{''(\lambda,k)}L:= -\sum_{rs}\Omega^{rs} a(X_r)  \circ L\circ \pi_{\lambda*}(X_s)=
-\half \sum_{rs}\Omega^{rs} c(X_r)  \circ L\circ \pi_{\lambda*}(X_s+i  jX_s)
.
\end{equation}
where $X_1,\ldots, X_{2n}$ is  a basis of
$\p$.  
\end{theorem}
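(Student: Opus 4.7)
The plan is to exploit the $SU(n+1)$-invariance of all the data together with the Peter--Weyl correspondence on spinor fields.

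First, all the ingredients entering $D$, $D_J$, $D'$, $D''$ --- the $\Mpc$-structure $P_{\chi_k}$, the symmetric $\Mpc$-connection, the complex structure $J$, and the symplectic form $\omega$ --- are $SU(n+1)$-invariant, so each of these operators commutes with the left-regular $SU(n+1)$-action on the space of spinor fields. Consequently each preserves the Peter--Weyl isotypic component associated to $\lambda$. Since $D'$ and $D''$ shift the polynomial degree by $\pm 1$, the relevant summand must be taken over all $l$, namely $\oplus_l V_\lambda\otimes\Hom_{K,(k)}(V_\lambda,S^l(\C^n))$.

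Next, I fix $v\in V_\lambda$ and $L\in\Hom_{K,(k)}(V_\lambda,S^l(\C^n))$, and consider the spinor field $\varphi_{v,L}(g):=L(\pi_\lambda(g^{-1})v)$. A direct computation using left-invariance gives
$$
(\widetilde{X_j}\varphi_{v,L})(g)=\left.\frac{d}{dt}\right|_{t=0}L\bigl(\pi_\lambda(e^{-tX_j}g^{-1})v\bigr)=-L\bigl(\pi_{\lambda*}(X_j)\pi_\lambda(g^{-1})v\bigr).
$$
Substituting into the global form (\ref{DiracSymmetric}) of the Dirac operator --- which is legitimate because we use the symmetric connection --- yields
$$
(D\varphi_{v,L})(g)=\sum_{ij}\Omega^{ij}\,cl(X_i)\,L\bigl(\pi_{\lambda*}(X_j)\pi_\lambda(g^{-1})v\bigr)=\varphi_{v,\,D^{(\lambda,k)}L}(g),
$$
with $D^{(\lambda,k)}$ as stated. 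The formulas for $D^{(\lambda,k)}_J$, $D'^{(\lambda,k)}$, and $D''^{(\lambda,k)}$ follow identically by using the global expressions (\ref{AtwistedDiracSymmetric}) with $A=J$ and (\ref{D'D''}), (\ref{Dprime}) in place of (\ref{DiracSymmetric}). The rewritings in (\ref{D'proj}) and (\ref{D''proj}) in terms of $X_s\mp ijX_s$ are a consequence of the $\C$-antilinearity of $c$ (so $c$ annihilates the $(1,0)$ part) and the $\C$-linearity of $a$ (so $a$ annihilates the $(0,1)$ part).

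The only step requiring genuine verification, and the main technical obstacle, is to confirm that $D^{(\lambda,k)}L$ still defines an element of $\Hom_{K,(k)}(V_\lambda,S^{l'}(\C^n))$ with the appropriate shifted degree $l'\in\{l-1,l,l+1\}$. This reduces to the $\Ad(K)$-invariance of the element $\sum_{ij}\Omega^{ij}X_i\otimes X_j\in\p\otimes\p$ (immediate from $\Ad(K)$-invariance of $\Omega$), combined with the intertwining identity $\mathbf{U}(\widetilde{\tau}(k))\,cl(X)\,\mathbf{U}(\widetilde{\tau}(k))^{-1}=cl(\Ad(k)X)$ and its analogues for $c$ and $a$, all of which follow from Lemmas \ref{lemma:nu} and \ref{lemma:numeta}. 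Together with the $K$-equivariance of $L$ itself, these combine to yield the required twisted $K$-equivariance of the output, completing the proof.
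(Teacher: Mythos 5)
Your argument is correct and is essentially the paper's proof: the heart of both is the computation $\widetilde{X_j}\varphi_{v,L}=-L\circ\pi_{\lambda*}(X_j)\pi_\lambda(\cdot^{-1})v$ followed by substitution into the global formulas (\ref{DiracSymmetric}), (\ref{AtwistedDiracSymmetric}), (\ref{D'sym}), (\ref{D''sym}) valid for the symmetric connection. The additional framing you supply (preservation of the $\lambda$-isotypic component via $SU(n+1)$-invariance, and the explicit check that $D^{(\lambda,k)}L$ is again a $K$-intertwiner) is sound but not really two separate arguments --- once you know $D$ is $G$-equivariant and hence preserves the isotypic summand, the Peter--Weyl correspondence already forces the resulting $L'$ to lie in $\Hom_{K,(k)}(V_\lambda,\oplus_l S^l(\C^n))$, so the ``main technical obstacle'' you flag at the end is automatically disposed of by your first paragraph.
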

\begin{proof}
For $v\in V_{\lambda}, \, L\in
\oplus_l\Hom_{K,(k)}(V_{\lambda},S^l(\C^n))$ and $X \in \p$ we have 
\[
\widetilde{X_j}(v\otimes L)=-v\otimes (L\circ \pi_{\lambda*}(X_j)),
\] 
hence, using the expression of the symplectic Dirac operator on a
symmetric space (\ref{DiracSymmetric}),
\[
D(v\otimes L)(g) 
= -\sum_{i,j}\Omega^{ij}cl(X_i)\widetilde{X_j}_g(v\otimes L) 
= \sum_{i,j}\Omega^{ij}cl(X_i) \left(v\otimes (L\circ \pi_{\lambda*}(X_j))\right)(g).
\]
The other operators  work similarly using (\ref{AtwistedDiracSymmetric},
\ref{D'sym}, \ref{D''sym}). \end{proof}
%%%%%%%%%%%%%%%%%%%%%%%%%%%%%%

\subsubsection{The spinor fields in  $V_{\lambda}\otimes \Hom_{K,(k)}(V_{\lambda},S^l(\C^n))$}

 We take the maximal torus $T=\left\{ \left(\begin{array}{ccc}
e^{i\theta_1} & 0 & 0 \\
0 & \ddots & 0 \\
0 & 0& e^{i\theta_{n+1}}
\end{array}\right) \, \middle\vert \, e^{i(\theta_1\ldots +\theta_{n+1})}=1\right\}$ in  $SU(n+1)$;
it is also a maximal torus in $K$. A weight of the Lie algebra
$\mathfrak{t}$ of $T$ is an imaginary valued  linear form on 
$\mathfrak{t}$; it will be written as  $b_1\epsilon_1+\ldots
+b_n\epsilon_n$ with
\[
(b_1\epsilon_1+\ldots +b_n\epsilon_n)\left(\begin{array}{ccc}
{i\theta_1} & 0 & 0 \\
0 & \ddots & 0 \\
0 & 0&{i\theta_{n+1}}
\end{array}\right)= {i(b_1\theta_1+\ldots b_n\theta_n)}.
\]
It is a weight of the group $T$ if it lifts to a group homomorphism from
$T$ to $U(1)$, i.e. if the $b_i$ are integers; then (with a slight abuse
of notation), the homomorphism is given by
\[
(b_1\epsilon_1+\ldots +b_n\epsilon_n)\left(\begin{array}{ccc}
e^{i\theta_1} & 0 & 0 \\
0 & \ddots & 0 \\
0 & 0& e^{i\theta_{n+1}}
\end{array}\right)= e^{i(b_1\theta_1+\ldots b_n\theta_n)}.
\]
The set of roots of $\su(n+1)$ is $\{ \pm(\epsilon_i-\epsilon_j) ;\,
1\le i<j\le n+1\}$ with $\epsilon_{n+1}:=-(\epsilon_1+\ldots
+\epsilon_n)$. The set of roots of $\K$ is $\{
\pm(\epsilon_i-\epsilon_j);\, 2\le i<j\le n+1\}$. We choose as positive
roots the $\epsilon_i-\epsilon_j$ for $i<j$. Any irreducible
representation of  a compact Lie group is characterised by a highest
weight. The highest weight for an irreducible  representation of
$SU(n+1)$ is an $n$-tuple of non-increasing non-negative  integers:
\[
\lambda=m_1\epsilon_1+\ldots m_n\epsilon_n 
 \textrm{ with } m_1\ge m_2\ge \ldots \ge m_{n-1}\ge m_n\ge 0.
\]
The highest weight for an irreducible  representation of $K\simeq U(n)$ 
is a  $n$-tuple of integers, the last $n-1$ being non-increasing and
non-negative:
\[
\mu=k_1\epsilon_1+\ldots k_n\epsilon_n 
 \textrm{ with } k_2\ge k_3\ge \ldots \ge k_{n-1}\ge k_n \ge 0.
\]

\begin{theorem}[A. Ikeda and Y. Tanigushi \cite{refs:Ikeda}] \label{multrule} 
An irreducible representation $(V,\rho)$ of $SU(n+1)$ of highest weight
$\lambda=m_1\epsilon_1+\ldots m_n\epsilon_n$  decomposes, as a
$K$-module, into irreducible $K$-modules as
\[
V=\oplus V^K_{k_1\epsilon_1+\ldots k_n\epsilon_n}
\]
where $V^K_{k_1\epsilon_1+\ldots k_n\epsilon_n}$ is the irreducible
$K$-module of highest weight $k_1\epsilon_1+\ldots k_n\epsilon_n$, and
where the summation runs over all integers $k_1,\ldots,k_n$ for which
there exists an integer $\tilde{k}$ satisfying
\[
m_1\ge k_2+\tilde{k} \ge m_2 \ge k_3+\tilde{k} \ge m_3 \ge \ldots 
\ge m_{n-1}\ge k_n+\tilde{k}\ge m_n \ge \tilde{k} \ge 0,
\]
and
\[ 
k_1=\sum_{i=1}^n m_i-\sum_{j=2}^n k_j-(n+1)\tilde{k}.
\]
\end{theorem}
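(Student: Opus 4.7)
My plan is to reduce the branching $SU(n+1)\downarrow K$ to the classical Gelfand--Tsetlin branching rule for $U(n+1)\downarrow U(n)$. The key observation is that $K\simeq U(n)$ sits inside $SU(n+1)$ as the image of the composition
\[
U(n)\;\xrightarrow{\;A\mapsto(\det(A)^{-1},A)\;}\; U(1)\times U(n)\;\xrightarrow{\text{block diag}}\;U(n+1),
\]
and that every irreducible $SU(n+1)$-representation with highest weight $\lambda=m_1\epsilon_1+\ldots+m_n\epsilon_n$ extends uniquely to an irreducible $U(n+1)$-representation $V_{(m_1,\ldots,m_n,0)}$ (normalising the last coordinate of the highest weight to zero).

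Next, I would invoke the classical branching for $U(n+1)\downarrow U(1)\times U(n)$: as a $U(1)\times U(n)$-module, $V_{(m_1,\ldots,m_n,0)}$ decomposes multiplicity-freely as
\[
\bigoplus_{(k'_1,\ldots,k'_n)} \mathbb{C}_{p}\otimes V^{U(n)}_{(k'_1,\ldots,k'_n)},
\qquad p=\sum_{i=1}^n m_i-\sum_{j=1}^n k'_j,
\]
summed over all integer tuples satisfying the interlacing
$m_1\ge k'_1\ge m_2\ge k'_2\ge\ldots\ge m_n\ge k'_n\ge 0$. Pulling this back along $A\mapsto(\det(A)^{-1},A)$ turns the $U(1)$-character $\chi_p$ into $\det(A)^{-p}$, so the resulting $K\simeq U(n)$-irreducible constituent has highest weight $\mu=(k'_1-p,\ldots,k'_n-p)$ in the standard $U(n)$-coordinates.

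The last step is bookkeeping: translate $\mu$ into the theorem's notation $k_1\epsilon_1+\ldots+k_n\epsilon_n$. Because $\epsilon_{n+1}=-(\epsilon_1+\ldots+\epsilon_n)$, the standard $U(n)$-weight $(\mu_1,\ldots,\mu_n)$ read off the diagonal entries $\theta_2,\ldots,\theta_{n+1}$ of $K$ corresponds under restriction of the $\epsilon$-basis to $k_1=-\mu_n$ and $k_{j+1}=\mu_j-\mu_n$ for $1\le j\le n-1$. Setting $\tilde k:=k'_n$, this gives $k'_j=k_{j+1}+\tilde k$ for $j<n$, $k'_n=\tilde k$, and $p=k_1+\tilde k$. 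Substituting these into $p=\sum m_i-\sum k'_j$ immediately yields the formula $k_1=\sum_{i=1}^n m_i-\sum_{j=2}^n k_j-(n+1)\tilde k$, and substituting them into the interlacing chain gives exactly
\[
m_1\ge k_2+\tilde k\ge m_2\ge k_3+\tilde k\ge\ldots\ge m_{n-1}\ge k_n+\tilde k\ge m_n\ge\tilde k\ge 0.
\]

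The main obstacle, apart from quoting the classical Gelfand--Tsetlin rule, is ensuring the weight-coordinate change in the final step is carried out correctly: one must keep careful track of the shift by $\epsilon_{n+1}$, and verify that the dominance condition $k_2\ge\ldots\ge k_n\ge 0$ with $k_1$ free in sign is precisely the image of the usual dominance $\mu_1\ge\ldots\ge\mu_n$ for $U(n)$ under this reindexing. Multiplicity-freeness of the decomposition is inherited directly from the classical rule, so no further work is needed on that point.
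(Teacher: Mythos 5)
The paper states this branching rule as a quoted result of Ikeda and Taniguchi and gives no proof of its own, so there is no argument in the paper to compare against; you have supplied a self-contained derivation. Your proof is correct. The reduction to the Gelfand--Tsetlin branching $U(n+1)\downarrow U(1)\times U(n)$, with $U(1)$ on the first diagonal entry and $U(n)$ on the last $n$, followed by pullback along $A\mapsto(\det A^{-1},A)$, is exactly the right mechanism, and your bookkeeping checks out: a $U(n)$-weight $\mu=(\mu_1,\ldots,\mu_n)$ read on $\theta_2,\ldots,\theta_{n+1}$ does translate, using $\theta_{n+1}=-(\theta_1+\cdots+\theta_n)$, to $k_1=-\mu_n$ and $k_{j+1}=\mu_j-\mu_n$; substituting $\mu_j=k'_j-p$ with $\tilde k:=k'_n$ yields $k'_j=k_{j+1}+\tilde k$ for $j<n$, $k'_n=\tilde k$, $p=k_1+\tilde k$, which turns the Gelfand--Tsetlin interlacing into the stated chain and $p=\sum m_i-\sum k'_j$ into $k_1=\sum m_i-\sum_{j\ge2}k_j-(n+1)\tilde k$. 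The dominance $k_2\ge\cdots\ge k_n\ge0$ with $k_1$ unconstrained matches the paper's parametrisation of $\widehat{U(n)}$, and the choice of extension $V_{(m_1,\ldots,m_n,0)}$ of the $SU(n+1)$-module to $U(n+1)$ is immaterial (any shift $c$ in all entries cancels after the pullback by $\det^{-1}$), so normalising $m_{n+1}=0$ is a harmless convenience. One small point worth being explicit about, since you implicitly use it: the Gelfand--Tsetlin rule is usually stated for $U(n)$ sitting on the \emph{first} $n$ coordinates, but conjugation by a cyclic permutation in $U(n+1)$ (an inner automorphism preserving the isomorphism class of every irreducible) carries that subgroup onto the one on the \emph{last} $n$ coordinates and respects the $U(1)$ complement, so the interlacing and the $U(1)$-weight $p=\sum m_i-\sum k'_j$ are indeed unchanged.
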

The representation $\rho'$ of $K$ on $S^l(\C^n))$ defined by
(\ref{rho'}) has a unique highest weight vector the polynomial
$f(z=(z_1,\ldots, z_{n}))= z_{n}^l$ (up to multiples). Hence the
representation  is irreducible  of highest weight
\[
(2l-k)\epsilon_1+l\epsilon_2+\ldots +l\epsilon_n.
\]

\begin{theorem}\label{spino}
If $(V_\lambda,\pi_\lambda)$ is an irreducible representation of
$SU(n+1)$  (with $n\ge 2$) of highest weight
$\lambda=m_1\epsilon_1+\ldots m_n\epsilon_n$ ($ m_1\ge m_2\ge \ldots \ge
m_{n-1}\ge m_n\ge 0$), the space $\Hom_{K,(k)}(V_{\lambda},S^l(\C^n))$
vanishes unless
\begin{eqnarray*}
&&m_1+m_n+k=3r \quad \textrm{ for an integer } r \textrm{ satisfying}\quad  m_n\le r\le m_1\\
&& r-m_n\le l\le r\\
&&m_2=m_3=\ldots =m_{n-1}=r  \quad \textrm{ when } \quad n>2.
\end{eqnarray*}
and under those conditions, $\Hom_{K,(k)}(V_{\lambda},S^l(\C^n)$ is one-dimensional.
\end{theorem}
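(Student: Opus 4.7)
The plan is to convert $\dim\Hom_{K,(k)}(V_\lambda, S^l(\C^n))$ into a branching multiplicity from $SU(n+1)$ to $K$, and then apply the Ikeda--Tanigushi rule (Theorem \ref{multrule}) with the appropriate choice of weights.

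First, I would identify $(S^l(\C^n), \rho')$ as an irreducible $K$-module. The monomial $z_n^l$ is annihilated by every positive root vector for $K$ and is a $\rho'$-eigenvector on the torus: evaluating $\rho'(\tilde q(B))z_n^l = \det^{k-l}(B)(B^{-1}z)_n^l$ on a diagonal $B$ and rewriting the resulting character of the common torus in the basis $\{\epsilon_i\}$ (using $\epsilon_{n+1} = -\sum_{i=1}^n \epsilon_i$) shows that the highest weight is
\[
\mu \;=\; (2l-k)\epsilon_1 + l\epsilon_2 + \cdots + l\epsilon_n.
\]
By Schur's lemma, $\dim\Hom_{K,(k)}(V_\lambda, S^l(\C^n))$ equals the multiplicity of $V^K_\mu$ in the restriction $V_\lambda|_K$.

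Next, I would plug $(k_1,k_2,\ldots,k_n) = (2l-k,l,l,\ldots,l)$ into Theorem \ref{multrule}. The existence of an integer $\tilde k$ with
\[
m_1 \ge l+\tilde k \ge m_2 \ge l+\tilde k \ge m_3 \ge \cdots \ge m_{n-1} \ge l+\tilde k \ge m_n \ge \tilde k \ge 0
\]
is needed, together with the linear constraint $2l - k = \sum_{i=1}^n m_i - (n-1)l - (n+1)\tilde k$. Setting $r := l+\tilde k$ reduces the chain to a sandwich: for $n \ge 3$ the inequalities $m_j \ge r$ and $r \ge m_j$ alternate forcing $m_2 = m_3 = \cdots = m_{n-1} = r$, while the outer endpoints impose $m_1 \ge r \ge m_n$ and $0 \le r - l \le m_n$, i.e.\ $r-m_n \le l \le r$ and $m_n \le r \le m_1$.

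Finally, substituting $\sum m_i = m_1 + (n-2)r + m_n$ (which for $n=2$ reduces to $m_1 + m_n$) into the formula for $k_1$ collapses the linear relation to $m_1 + m_n + k = 3r$. This determines $r$ uniquely from $\lambda$ and $k$ (also imposing the divisibility $m_1 + m_n + k \equiv 0 \pmod 3$), and conversely, given any integer $r$ meeting the stated inequalities, $\tilde k = r - l$ satisfies all branching conditions. Hence the multiplicity is $0$ unless the listed conditions hold and is $1$ when they do, which is exactly the content of the theorem. The entire substantive input is Theorem \ref{multrule}; the only care needed is in the sandwich argument, which is delicate only insofar as one must verify that the chained weak inequalities with repeated terms force the intermediate $m_j$ to equal $r$ exactly.
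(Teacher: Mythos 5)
Your proof is correct and follows exactly the route the paper intends: identify $(S^l(\C^n),\rho')$ as the irreducible $K$-module of highest weight $(2l-k)\epsilon_1+l\epsilon_2+\cdots+l\epsilon_n$, reduce the $\Hom$-space to a branching multiplicity via Schur's lemma, and specialize the Ikeda--Tanigushi rule (Theorem \ref{multrule}) with $(k_1,\ldots,k_n)=(2l-k,l,\ldots,l)$. The paper sets up precisely these two ingredients immediately before the theorem and omits the straightforward substitution; your sandwich argument with $r=l+\tilde k$ and the collapse of the linear constraint to $m_1+m_n+k=3r$ is the missing verification, carried out correctly (including the $n=2$ degenerate case).
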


In particular, in the case of $\C P^2$, the space of spinor fields for
the $\Mpc$-structure corresponding to $k=0$ is given by
\[
\oplus_{a\ge 0, r\ge a}V_{(2r+a)\epsilon_1+(r-a)\epsilon_2} \otimes  \left( \oplus_{a\le l\le r} 
\Hom_{U(2)}( V_{(2r+a)\epsilon_1+(r-a)\epsilon_2}, S^l(\C^2))\right);
\]
and  each of those $\Hom_{U(2)}( V_{(2r+a)\epsilon_1+(r-a)\epsilon_2}, S^l(\C^2))$ is one-dimensional.

%%%%%%%%%%%%%%%%%%%%%%%%%

\subsubsection{The spectrum of $\mathcal{P}$ on $\C P^n$}

To compute the spectrum of the commutator $\mathcal{P}$, we  use 
Parthasarathy formula as given in subsection \ref{ssection:Partha}:
\begin{eqnarray}
 \mathcal{P}\widetilde\varphi &=&-i (DD_J-D_JD)\widetilde\varphi \nonumber \\
 &=&  \frac{1}{\hbar}\Bigl( {\widetilde{\Omega_{\su(n+1)}^{\widetilde{B}}}}
 + (cl\circ \nu\circ\ad\vert_\p)(\Omega_\K^{\widetilde{B}})
 -4(cl\circ \nu\circ\ad\vert_\p 
 +\frac{1}{4} \chi'_*)(\Omega_\K^{\widetilde{B}})\Bigr)\widetilde\varphi
\end{eqnarray}
where $\widetilde{B}(X,Y)=-\half \Tr XY$.

\begin{lemma}
Let $\mathfrak{t}$ be the Lie algebra of a maximal torus $T$ in the Lie
algebra $\g$ of a compact Lie group $G$. Let $\Phi$ be the set of roots
and $\Phi^+$ be the chosen set of positive roots. Let $\widetilde{B}$ be
an invariant symmetric non-degenerate   real bilinear form on $\g$; it
induces an isomorphism between $i\mathfrak{t}^*$ and $i\mathfrak{t}$,
and  a scalar product on $i\mathfrak{t}^*$ which we denote by $< \, , \,
>_{\widetilde{B}}$. 
The Casimir operator
$\Omega_{\g}^{\widetilde{B}}=\sum_{r,s}{\widetilde{B}}^{rs}X_r\circ
X_s$ (where the $X_r$ form a basis of $\g$,  ${\widetilde{B}}_{rs}=
{\widetilde{B}}(X_r,X_s)$ and ${\widetilde{B}}^{rs}$ is the inverse
matrix) acts  on an irreducible representation $(V,\pi)$ of the Lie
algebra $\g$ of highest weight $\lambda\in i\mathfrak{t}^*$ by a
multiple of the identity given by
\[
\pi \left(\Omega_{\g}^{\widetilde{B}}\right)\vert_{V_\lambda}
=c_\lambda^{\g,{\widetilde{B}}} \Id\vert_{V_\lambda}\quad 
\textrm{with} \quad c_\lambda^{\g,{\widetilde{B}}}:=
<\lambda,2\rho+\lambda>_{\widetilde{B}} 
\]
where $\rho=\half \sum_{\alpha\in \Phi^+}\alpha$ is half the sum of the
positive roots.
\end{lemma}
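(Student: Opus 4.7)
The plan is the standard computation of the Casimir eigenvalue, which I would carry out in four steps. First, since $\widetilde{B}$ is $\Ad$-invariant and non-degenerate, the element $\Omega_\g^{\widetilde{B}}$ lies in the centre of the universal enveloping algebra $U(\g^{\C})$. By Schur's lemma applied to the irreducible representation $(V,\pi)$, it therefore acts by a scalar $c_\lambda^{\g,\widetilde{B}}\,\Id$, and it suffices to evaluate this scalar on a single convenient vector, namely a highest weight vector $v_\lambda \in V$.

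Next, I would pass to the complexified Lie algebra and choose a basis adapted to the root space decomposition: an orthonormal basis $H_1,\dots,H_r$ of $i\mathfrak{t}$ with respect to $\widetilde{B}$, together with root vectors $E_\alpha \in \g^{\C}_\alpha$ normalised by $\widetilde{B}(E_\alpha,E_{-\alpha})=1$ for each $\alpha\in\Phi$. Using the fact that root spaces for distinct roots are $\widetilde{B}$-orthogonal and that $\mathfrak{t}$ is orthogonal to all root spaces, the inverse matrix $\widetilde{B}^{rs}$ becomes block-diagonal and the Casimir rewrites as
\begin{equation*}
\Omega_\g^{\widetilde{B}} = \sum_{i=1}^r H_i^2 + \sum_{\alpha\in\Phi} E_\alpha E_{-\alpha}
= \sum_{i=1}^r H_i^2 + \sum_{\alpha\in\Phi^+}\bigl(E_\alpha E_{-\alpha}+E_{-\alpha}E_\alpha\bigr).
\end{equation*}

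Then, for each positive root $\alpha$ I would use the relation $E_\alpha E_{-\alpha} = E_{-\alpha}E_\alpha + H_\alpha$, where $H_\alpha\in i\mathfrak{t}$ is the element dual to $\alpha$ via $\widetilde{B}$, so that $\alpha(H)=\widetilde{B}(H_\alpha,H)$ for all $H\in i\mathfrak{t}$. Substituting gives
\begin{equation*}
\Omega_\g^{\widetilde{B}} = \sum_{i=1}^r H_i^2 + \sum_{\alpha\in\Phi^+} H_\alpha + 2\sum_{\alpha\in\Phi^+} E_{-\alpha}E_\alpha.
\end{equation*}
Applied to $v_\lambda$, the last sum vanishes because $E_\alpha v_\lambda=0$ for every $\alpha\in\Phi^+$. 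For the Cartan terms, $H_i v_\lambda = \lambda(H_i) v_\lambda$, and since $H_1,\dots,H_r$ is $\widetilde{B}$-orthonormal one has $\sum_i \lambda(H_i)^2 = \langle\lambda,\lambda\rangle_{\widetilde{B}}$ under the identification $i\mathfrak{t}\isom i\mathfrak{t}^*$. The commutator terms contribute $\sum_{\alpha\in\Phi^+}\alpha(H_\lambda)=\langle 2\rho,\lambda\rangle_{\widetilde{B}}$, where $H_\lambda$ is the element of $i\mathfrak{t}$ dual to $\lambda$.

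Adding the two contributions yields $\pi(\Omega_\g^{\widetilde{B}})v_\lambda = \langle\lambda,2\rho+\lambda\rangle_{\widetilde{B}}\, v_\lambda$, which is the stated formula. The only technical point requiring care is the consistent identification of $i\mathfrak{t}$ and $i\mathfrak{t}^*$ via the restriction of $\widetilde{B}$ (and of real weights with purely imaginary functionals on $\mathfrak{t}$), along with the normalisation $\widetilde{B}(E_\alpha,E_{-\alpha})=1$ which makes the diagonal form of the dual basis transparent; no step is deep, and there is no real obstacle.
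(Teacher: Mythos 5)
Your proof is correct and follows essentially the same route as the paper: invoke Schur's lemma, decompose $\g^{\C}$ into Cartan and root spaces, and evaluate the Casimir on a highest weight vector, picking up $\langle\lambda,\lambda\rangle_{\widetilde{B}}$ from the Cartan part and $\langle\lambda,2\rho\rangle_{\widetilde{B}}$ from the commutator terms. The only difference is bookkeeping: you work directly with a complex basis of $\g^{\C}$ normalised so that $\widetilde{B}(E_\alpha,E_{-\alpha})=1$, whereas the paper uses a real basis $\{X_\alpha,Y_\alpha,T_i\}$ of $\g$ and the inverse matrix $\widetilde{B}^{rs}$ (which also avoids your implicit assumption that $\widetilde{B}$ restricts to a positive-definite form on $i\mathfrak{t}$ — the lemma only asks for non-degeneracy, so a dual-basis formulation is the safer phrasing, though your argument adapts immediately).
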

\begin{proof}
Since the Casimir is central in the universal enveloping algebra of $\g$
by invariance of $\widetilde{B}$, it acts on any irreducible
representation of the Lie algebra $\g$ by a multiple of the identity. To
compute this multiple on an irreducible representation of highest
$\lambda$ we evaluate its action on a highest vector $v_\lambda$.

The complexified Lie algebra $ \g^\C$ decomposes as
$\g^\C={\mathfrak{t}}^\C\oplus \oplus_{ \alpha\in \Phi} \g_\alpha$.
Extending $\widetilde{B}$ $\C$-linearly to $\g^\C$, invariance implies
$\widetilde{B}(\g_\alpha,\g_\beta)=0$ unless $\alpha+\beta=0$, and the
restriction of $\widetilde{B}$ to $\mathfrak{t}^\C$ is non-degenerate.
For any root $\alpha\in \Phi$ we define $u_\alpha \in i\mathfrak{t}^*$
so that $\alpha(h)=\widetilde{B}(u_\alpha,h) \, \forall
h\in\mathfrak{t}$. Choosing $E_\alpha\in \g_\alpha$ for any positive
root $\alpha\in\Phi^+$ and choosing a basis $\{ \,T_1,\ldots ,T_k\, \}$ of
$\mathfrak{t}$, a basis for $\g$ is given by $\{\,
X_\alpha:=\frac{1}{2}(E_\alpha+\overline{E_\alpha}),Y_\alpha:=\frac{1}{
2i}(E_\alpha-\overline{E_\alpha})\,\textrm{ for  all }\,
\alpha\in\Phi^+,T_1,\ldots, T_k\, \}$. Observe that the conjugate
$\overline{E_\alpha}$ is in $\g_{-\alpha}$. By invariance, we have
$[E_\alpha,\overline{E_\alpha}]=\widetilde{B}(E_\alpha,\overline{E_\alpha})u_\alpha$ so that the $X_\alpha, Y_\alpha$'s  are all
$\widetilde{B}$ orthogonal and
$\widetilde{B}(X_\alpha,X_\alpha)=\frac{1}{2}\widetilde{B}(E_\alpha,\overline{E_\alpha})= -\widetilde{B}(Y_\alpha,Y_\alpha)$.  Since
$\pi(\g_{\alpha})v_\lambda=0$ for any $\alpha\in\Phi^+$ the action of
the Casimir operator on the highest vector $v_\lambda$ can be written
\begin{eqnarray*}
\pi \left(\Omega_{\g}^{\widetilde{B}}\right)v_\lambda
&=&\left(\sum_{\alpha\in\Phi^+}\frac{2}{\widetilde{B}(E_\alpha,\overline{E_\alpha})}\left(\pi(X_\alpha)\circ\pi(X_\alpha)+\pi(Y_\alpha)\circ \pi(Y_\alpha)\right)+\sum_{i,j=1}^k {\widetilde{B}}^{ij}T_i\circ T_j\right)v_\lambda\\
&=&\left(\sum_{\alpha\in\Phi^+}\frac{1}{\widetilde{B}(E_\alpha,\overline{E_\alpha})}\left(\pi(E_\alpha)\circ\pi(\overline{E_\alpha})+\pi(\overline{E_\alpha})\circ \pi(E_\alpha)\right)+\sum_{i,j=1}^k {\widetilde{B}}^{ij}T_i\circ T_j\right)v_\lambda\\
&=&\sum_{\alpha\in\Phi^+}\frac{1}{\widetilde{B}(E_\alpha,\overline{E_\alpha})}\left[\pi(E_\alpha),\pi(\overline{E_\alpha})\right]\, v_\lambda+\sum_{i,j=1}^k {\widetilde{B}}^{ij}\lambda(T_i)\lambda(T_j)v_\lambda \\
&=&\sum_{\alpha\in\Phi^+} \pi(u_\alpha)v_\lambda +\sum_{i,j=1}^k {\widetilde{B}}^{ij} {\widetilde{B}}(u_\lambda,T_i){\widetilde{B}}(u_\lambda,T_j))v_\lambda\\
&=&\left(\sum_{\alpha\in\Phi^+} \lambda(u_\alpha)+{\widetilde{B}}(u_\lambda,u_\lambda)\right) v_\lambda \\
&=& \left( <\lambda,2\rho>_{\widetilde{B}}+<\lambda,\lambda>_{\widetilde{B}} \right) \, v_\lambda.
\end{eqnarray*}
\end{proof}
We consider a  spinor field  $\widetilde\varphi=v\otimes L $ belonging
to $ V_{\lambda} \otimes \Hom_{K} (V_{\lambda}, S^l(\C^n))$ for an
irreducible representation  $(V_{\lambda},\pi_\lambda)$ of $SU(n+1)$ of
highest weight
\begin{equation}
\lambda = m_1 \epsilon_1 + \ldots m_n \epsilon_n,
\end{equation}
with $m_1\geq m_2 \geq \ldots \geq m_n$ satisfying the conditions of
Theorem \ref{multrule}. Since  $\widetilde{X}(v\otimes L) =-L\circ
\pi_{\lambda*}(X)$ for all $X \in \su(n+1)$ we have
\begin{eqnarray*}
(\widetilde{\Omega_{\su(n+1)}^{\widetilde{B}}}( v\otimes L))(g)
&=& \left( \pi_{\lambda*}(\Omega _{\su(n+1)}^{\widetilde{B}})v \otimes L\right) (g)\\
&=& c^{\su(n+1),{\widetilde{B}}}_\lambda (v\otimes L)(g).
\end{eqnarray*}
Also $\left(\left((cl\circ
\nu\circ\ad\vert_\p)(\Omega_\K^{\widetilde{B}})\right)( v\otimes
L)\right)(g)=\left((cl\circ
\nu\circ\ad\vert_\p)(\Omega_\K^{\widetilde{B}})\right)\left(( v\otimes
L)(g)\right)$. By equation (\ref{Ustar})  and the definition (\ref{UinMpc}) 
of $F_j$, we have $cl\circ \nu(Y)={\mathbf{U}}_*\left( 
(F_{j*}(Y)\right)- \half \Tr_j (Y)\id$ for all $Y\in \u(n)$. Since
$\Ad\vert_\p\tilde{q}(A)=(\det_jA)A$ and 
$({\mathbf{U}}(F_j((\det_jA)A))f)(z)=f((\det_jA)^{-1}A^{-1}z)$ for all
$A\in U(n)$,  the representation $(cl\circ \nu\circ\ad\vert_\p)$ of $\K$
on $S^l(\C^n)$ is given by
\[
((cl\circ \nu\circ\ad\vert_\p)({\tilde{q}}_*b)f)(z)
= \left(-l-\frac{n+1}{2}\right)\Tr_jb\, f(z) +\frac{d}{dt} f(\exp -tb\,  z) \qquad \forall b\in \u(n).
\]
For only highest weight vector we get  the polynomial $f(z=(z_1,\ldots,
z_{n}))= z_{n}^l$ (up to a multiple). Hence the representation  is
irreducible  of highest weight
\begin{equation}
\beta = \left(2l+\frac{n+1}{2}\right)\epsilon_1+l\epsilon_2+\ldots +l\epsilon_n.
\end{equation}
We get 
\begin{eqnarray*}
\left(\left((cl\circ \nu\circ\ad\vert_\p)(\Omega_\K^{\widetilde{B}})\right)( v\otimes L)\right)(g)
 &=& c^{\K,{\widetilde{B}}}_\beta (v\otimes L)(g) \qquad {\textrm{ and }}\\
\left(\left( (cl\circ \nu\circ\ad\vert_\p +\frac{1}{4} \chi'_*)(\Omega_\K^{\widetilde{B}})\right)( v\otimes L)\right)(g)
 &=& c^{\K,{\widetilde{B}}}_{\gamma} (v\otimes L)(g)
\end{eqnarray*}
because  $cl\circ \nu\circ\ad\vert_\p +\frac{1}{4} \chi'_*$ is again an
irreducible representation of $\K$ on $S^l(\C^n)$. Since
$\chi'(\tilde{q}(A))=\eta (\chi\otimes F_j\circ 
\Ad\vert_\p)(\tilde{q}(A))=\eta((\det_jA)^k,(\det_jA)A)=(\det_jA)^{2k}
\det_j((\det_jA)A)=( \det_j A)^{2k+n+1}$, it has highest weight
\begin{equation}
\gamma =\left(2l+\frac{n+1}{2}-\frac{2k+n+1}{4}\right)\epsilon_1+l\epsilon_2+\ldots +l\epsilon_n.
\end{equation}
Hence
\begin{lemma}
For the $\Mpc$-structure on $\C P^n$ defined by $\chi_k$ (as in
(\ref{Pchik})), the elliptic operator $\mathcal{P} =-i (DD_J-D_JD)$ acts
on  the subspace of spinor fields $V_{\lambda} \otimes \Hom_{K,(k)}
(V_{\lambda},S^l(\C^n))$ (when it does not vanish)  as a multiple of the
identity given by
\begin{eqnarray*}
\frac{1}{\hbar}\left( c^{\su(n+1),{\widetilde{B}}}_\lambda + c^{\K,{\widetilde{B}}}_\beta-4c^{\K,{\widetilde{B}}}_{\gamma}\right)
&=&\frac{1}{\hbar}\left(<\lambda,2\rho_{\su(n+1)}+\lambda>_{\widetilde{B}}\right.
+<\beta,2\rho_{\K}+\beta >_{\widetilde{B}}\\
&~&\qquad \left. -4<\gamma,2\rho_{\K}+\gamma>_{\widetilde{B}} \right)
\end{eqnarray*}
for $\beta =(2l+\frac{n+1}{2})\epsilon_1+l\epsilon_2+\ldots
+l\epsilon_n$ and $\gamma =\beta-\frac{2k+n+1}{4}\epsilon_1$.
\end{lemma}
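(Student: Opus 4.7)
The plan is to apply Proposition~\ref{prop:Parth} with $A = J$ to a spinor field lying in $V_{\lambda}\otimes\Hom_{K,(k)}(V_\lambda,S^l(\C^n))$, and then invoke the preceding Casimir lemma three times to evaluate each of the three terms in the Parthasarathy formula.

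First I would observe that a spinor field $\widetilde\varphi = v\otimes L$ corresponds to the $K$-equivariant function $g\mapsto L(\pi_{\lambda}(g^{-1})v)$, so a left-invariant vector field $\widetilde{X}$ acts on it by $-\pi_{\lambda *}(X)$ applied in the first factor. Extending multiplicatively to the universal enveloping algebra, the Casimir $\widetilde{\Omega_{\su(n+1)}^{\widetilde B}}$ acts on $v\otimes L$ by the scalar $c^{\su(n+1),\widetilde B}_{\lambda}=\langle\lambda,2\rho_{\su(n+1)}+\lambda\rangle_{\widetilde B}$, by the Casimir lemma.

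For the second term, note that $(cl\circ\nu\circ\ad|_{\p})(\Omega_{\K}^{\widetilde B})$ acts pointwise on the values, i.e.\ via the corresponding representation of $\K$ on $S^l(\C^n)$. Using the identity $cl(\nu(Y))=\mathbf{U}_*(F_{j*}(Y))-\half\Tr_j(Y)\id$ derived from (\ref{Ustar}) applied to $Y\in\u(n)$, together with $\Ad|_{\p}(\tilde q(A))=(\det_j A)A$ and the Fock-picture formula $(\mathbf{U}(F_j(B))f)(z)=f(B^{-1}z)$, one finds
\[
((cl\circ\nu\circ\ad|_{\p})(\tilde q_*b)f)(z)
 = \bigl(-l-\tfrac{n+1}{2}\bigr)\Tr_j(b)\,f(z)+\tfrac{d}{dt}\bigr|_{0} f(\exp(-tb)z),
\]
for $b\in\u(n)$. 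This representation of $\K$ on $S^l(\C^n)$ is irreducible with highest-weight vector $z_n^l$, and reading off the $T$-weight one gets $\beta=(2l+\tfrac{n+1}{2})\epsilon_1+l\epsilon_2+\cdots+l\epsilon_n$. A second application of the Casimir lemma gives the scalar $c^{\K,\widetilde B}_{\beta}$.

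For the third term involving $cl\circ\nu\circ\ad|_{\p}+\tfrac14\chi'_*$, one must compute $\chi'$ explicitly. By (\ref{eq:defchi'}), $\chi'(k)=\chi_k(k)^2\,\eta(\widetilde\tau(k))$; plugging in $\chi_k(\tilde q(A))=\det^k A$, $\widetilde\tau(\tilde q(A))=F_j((\det_j A)A)$, and the relation $\eta=\lambda^2\Det_j\circ\sigma$ from (\ref{mpc:chars}), one obtains $\chi'(\tilde q(A))=(\det_j A)^{2k+n+1}$. The extra summand $\tfrac14\chi'_*$ shifts the highest weight of the representation from $\beta$ to $\gamma=\beta-\tfrac{2k+n+1}{4}\epsilon_1$, so that a third application of the Casimir lemma yields $c^{\K,\widetilde B}_{\gamma}$. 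Combining the three scalars with the coefficients $1$, $1$, $-4$ as in Proposition~\ref{prop:Parth} and dividing by $\hbar$ gives the claimed formula. The main obstacle is the bookkeeping in the bottom two paragraphs: keeping straight the scalar contribution from $-\half\Tr_j$ in (\ref{Ustar}), the determinant twist in $\Ad|_{\p}\circ\tilde q$, and the exponent $2k+n+1$ in the character $\chi'$ — each of which affects the highest weights by an $\epsilon_1$-shift.
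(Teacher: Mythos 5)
Your proof follows the paper's argument essentially step for step: apply Proposition~\ref{prop:Parth} with $A=J$, observe that each of the three terms in the Parthasarathy formula is a Casimir in an irreducible representation, determine the highest weights $\lambda$, $\beta$, $\gamma$ from (\ref{Ustar}), (\ref{UinMpc}), (\ref{eq:defchi'}), (\ref{mpc:chars}), and invoke the preceding Casimir lemma three times; the computed $\beta$, $\gamma$, and $\chi'$ agree exactly with the paper's. One small imprecision: $\widetilde{X}(v\otimes L)=-v\otimes(L\circ\pi_{\lambda*}(X))$ rather than ``$-\pi_{\lambda*}(X)$ applied in the first factor,'' though this does not affect the Casimir scalar since $\Omega^{\widetilde B}_{\su(n+1)}$ is central.
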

In our situation, for $\g=\su(n+1)$ or $\g=\K$, with
$\widetilde{B}(X,Y)=-\frac{1}{2}\Tr XY$,  an element
$\kappa=k_1\epsilon_1+\ldots k_n \epsilon_n$ in $i{\mathfrak{t}}^*$
corresponds to the element  
\[
u_\kappa= 2\left(\begin{array}{ccccc} -k_1+\frac{\sum_{j} k_j}{n+1} & 0& \ldots&\ldots&0\\
        0& -k_2+{\frac{\sum_{j} k_j}{n+1}}&0&\ddots&\vdots\\[3mm]
        \vdots&\ddots &\ddots&\ddots &\vdots \\[3mm]
        \vdots&\ldots&0&-k_n+\frac{\sum_{j} k_j}{n+1}&0\\[3mm]
        0&\ldots&\ldots&0&+\frac{\sum_{j} k_j}{n+1}
\end{array}\right)
\]
 in $i{\mathfrak{t}}$. Hence
 \[
 <\kappa,\kappa'>=-2\left(\sum_j k_j k'_j  -\frac{1}{n+1}\sum_jk_j\sum_ik'_i\right).
 \]
 On the other hand
 \begin{eqnarray*}
 2\rho_{\su(n+1)}
  &=& \sum_{1\le i<j\le n+1} (\epsilon_i-\epsilon_j)\\
  &=& n\epsilon_1+(n-2)\epsilon_2+\ldots+(-n)\epsilon_{n+1}\\
  &=& 2n\epsilon_1+2(n-1)\epsilon_2+\ldots + 2\epsilon_n
 \end{eqnarray*}
 and 
\begin{eqnarray*}
2\rho_{\K}
 &=& \sum_{2\le i<j\le n+1} (\epsilon_i-\epsilon_j)\\
 &=& (n-1)\epsilon_1+2(n-1)\epsilon_2+2(n-2)\epsilon_3+\ldots +2\epsilon_n.
\end{eqnarray*}

%%%%%%%

\subsubsection{Kernels of the Dirac--Dolbeault  operators $D'$ and $D''$ on $\C P^n$.}
We consider the basis of $\p$ defined by $X_k:=q(e_k)=E_{k0}-E_{0k}, \,
X_{n+k}:=jX_k=q(ie_k)= iE_{0k}+iE_{k0}$ for $1\le k\le n$, with
$e_1,\ldots,e_n$ the standard basis of $\C^n$. Observe that
$X_k-ijX_k=q(e_k)-iq(ie_k)=2E_{k0}$ and similarly
$X_{n+k}-ijX_{n+k}=2iE_{k0}$. Furthermore
$\Omega(q(e_k),q(e_l))=\Omega(q(ie_k),q(ie_l))=0$ and $\Omega
(q(e_k),q(ie_l))=\delta_{kl}$. Hence, for any element $L\in
\oplus_l\Hom_{K,(k)}(V_{\lambda},S^l(\C^n))$  the operator
$D^{'(\lambda,k)}$ can be written as in (\ref{D'proj})
\begin{eqnarray}
 D^{'(\lambda,k)}L&=&\half \sum_{rs=1}^{2n}\Omega^{rs} c(X_r)  \circ L\circ \pi_{\lambda*}(X_s-i  jX_s)\nonumber\\
 &=&  \half \sum_{k=1}^n c(e_k)  \circ L\circ \pi_{\lambda*}(2iE_{k0})-  \half\sum_{k=1}^n c(ie_k)  \circ L\circ \pi_{\lambda*}(2E_{k0})\nonumber\\
 &=&  2i\sum_{k=1}^n c(e_k)  \circ L\circ \pi_{\lambda*}(E_{k0})\label{D'pnc}
 \end{eqnarray}
 where $(c(e_k)f)(z):= \frac1{2\hbar}z^k f(z)$ following (\ref{defac}).
 Similarly $X_k+ijX_k=-2E_{0k}$ and $   X_{n+k}+ijX_{n+k}=2iE_{0k}$ and (\ref{D''proj}) becomes
 \begin{equation}\label{D''pnc}
D^{''(\lambda,k)}L=  -2i\sum_{k=1}^n a(e_k)  \circ L\circ \pi_{\lambda*}(E_{0k})
  \end{equation}
where $(a(e_k)f)(z):= \frac{\partial}{\partial z^k} f(z)$. In view of
Theorem \ref{spino}, the space of spinors for the $\Mpc$-structure on
$\C P^n$ defined by $\chi_k$ (as in (\ref{Pchik})) is given by
\[
\oplus_{r\ge 0,
               0\le  b\le r, r+b\ge k}V_{\lambda=(2r+b-k)\epsilon_1+r\epsilon_2+\ldots+r\epsilon_{n-1}+(r-b)\epsilon_n}\otimes \left(\oplus_{l=b}^r\Hom_{K,(k)}(V_\lambda,S^l(\C^n))\right).
\]
and each $ \Hom_{K,(k)}(V_\lambda,S^l(\C^n))$ in this sum is
$1$-dimensional.

Since $D'$ preserves each
$\left(\oplus_{l=b}^r\Hom_{K,(k)}(V_\lambda,S^l(\C^n))\right)$ and
increases the degree of polynomials, it is clear that    
\[
\oplus_{l\ge 0,
               0\le  b\le l, l+b\ge k}V_{\lambda=(2l+b-k)\epsilon_1+l\epsilon_2+\ldots+l\epsilon_{n-1}+(l-b)\epsilon_n}\otimes \Hom_{K,(k)}(V_\lambda,S^l(\C^n))
\]
is included in the kernel of $D'$. Furthermore, for any 
$\lambda=(2r+b-k)\epsilon_1+r\epsilon_2+\ldots+r\epsilon_{n-1}+(r-b)
\epsilon_n$, and for any $0\le l<r$, the operator $D'$ maps a generator
$L_{(k,\lambda,l)}$ of $\Hom_{K,(k)}(V_\lambda,S^l(\C^n))$ on a
multiple of a generator $L_{(k,\lambda,l+1)}$ of
$\Hom_{K,(k)}(V_\lambda,S^{l+1}(\C^n))$; we shall show that this
multiple is not zero.Using (\ref{D'pnc}) we have 
\[
D^{'(\lambda,k)}L_{(k,\lambda,l)}w=  2i\sum_{s=1}^n c(e_s)  \circ L_{(k,\lambda,l)}\circ \pi_{\lambda*}(E_{s0})w
\]
and we choose the vector $w=w_{(l+1)}$ in $V_{\lambda}$ so that it is a
highest weight vector for $K$ and $L_{(k,\lambda,l+1)} w_{(l+1)}=
z_n^{l+1}$. It has weight
\[
(2l+2-k)\epsilon_1+(l+1)\epsilon_2+\ldots +(l+1)\epsilon_n=\lambda+(r-l-1)(\epsilon_{n+1}-\epsilon_1)+b(\epsilon_n-\epsilon_1).
\]
Since  $\pi_{\lambda*}(E_{k0})w_{(l+1)}$ has weight 
\[
(2l+2-k)\epsilon_1+(l+1)\epsilon_2+\ldots +(l+1)\epsilon_n +\epsilon_{k+1}-\epsilon_1
\]
for any $k<n$ and since $L_{(k,\lambda,l)}$ vanish on all vectors of weight higher than
$(2l-k)\epsilon_1+l\epsilon_2+\ldots +l\epsilon_n,$ we have 
\[
D^{'(\lambda,k)}L_{(k,\lambda,l)}w_{(l+1)}=  2i c(e_n)  \circ L_{(k,\lambda,l)}\circ \pi_{\lambda*}(E_{n0})w_{(l+1)}.
\]
On the other hand, if $v_\lambda$ denotes a highest weight vector in
$V_\lambda$, any vector in $V_\lambda$ is a linear combination
of vectors of the form
\[
\left(\Pi_{j>k\ge 1}(\pi_{\lambda*}(E_{jk}))^{r_{jk}}\right)(\pi_{\lambda*}(E_{n0}))^{r_{n0}}\ldots  (\pi_{\lambda*}(E_{10}))^{r_{10}}v_\lambda.
\]
Since $L_{(k,\lambda,l+1)}$ is $K$-equivariant and vanishes on all
vectors of weight higher than \linebreak $(2l+2-k)\epsilon_1+(l+1)\epsilon_2+\ldots
+(l+1)\epsilon_n,$ the element $w_{(l+1)}$ has a non-zero component in
$(\pi_{\lambda*}(E_{n0}))^{r-l-1}(\pi_{\lambda*}(E_{(n-1)0}))^{b}v_\lambda$
\[
w_{(l+1)}= c_{l+1}(\pi_{\lambda*}(E_{n0}))^{r-l-1}(\pi_{\lambda*}(E_{(n-1)0}))^{b}v_\lambda + w'_{(l+1)}
\]
with $w'_{(l+1)}\in \oplus_{j>k\ge
1}(\pi_{\lambda*}(E_{jk}))\left(\left(\Pi_{j>k\ge
1}(\pi_{\lambda*}(E_{jk}))^{r_{jk}}\right)(\pi_{\lambda*}(E_{n0}))^{r_{
n0}}\ldots  (\pi_{\lambda*}(E_{10}))^{r_{10}}v_\lambda\right)$.\\ Now,
$L_{(k,\lambda,l)}$ is also $K$-invariant, and $\pi_{\lambda*}(E_{n0})$
commutes with $\pi_{\lambda*}(E_{jk})$ for $j>k>0$ so that 
\[
L_{(k,\lambda,l)}\circ \pi_{\lambda*}(E_{n0})w_{(l+1)}
 =c_{l+1}L_{(k,\lambda,l)}\circ \pi_{\lambda*}(E_{n0})(\pi_{\lambda*}(E_{n0}))^{r-l-1}
 (\pi_{\lambda*}(E_{(n-1)0}))^{b}v_\lambda\neq 0.
\]
Similarly, since $D''$ decreases the degree of the polynomials,
\[
\oplus_{r\ge 0,
               0\le  l\le r, r+l\ge k}V_{\lambda=(2r+l-k)\epsilon_1+r\epsilon_2+\ldots+r\epsilon_{n-1}+(r-l)\epsilon_n}\otimes \Hom_{K,(k)}(V_\lambda,S^l(\C^n))
\]
is  the kernel of $D''$ and we have

\begin{proposition}
For the $\Mpc$-structure on $\C P^n$ defined by $\chi_k$ (as in
(\ref{Pchik})), the kernel of $D'$ is given by
\[
\oplus_{l\ge 0,
               0\le  b\le l, l+b\ge k}V_{\lambda=(2l+b-k)\epsilon_1+l\epsilon_2+\ldots+l\epsilon_{n-1}+(l-b)\epsilon_n}\otimes \Hom_{K,(k)}(V_\lambda,S^l(\C^n))
\]
and, for each
$\lambda=(2r+b-k)\epsilon_1+r\epsilon_2+\ldots+r\epsilon_{n-1}+(r-b)\epsilon_n$ with $r\ge 0, 0\le  b\le r, r+b\ge k$, the operator $D'$
induces a bijection from $\Hom_{K,(k)}(V_\lambda,S^l(\C^n))$ to
$\Hom_{K,(k)}(V_\lambda,S^{l+1}(\C^n))$ for each $b\le l<r$.

The kernel of $D''$ is given by 
\[
\oplus_{r\ge 0,
               0\le  l\le r, r+l\ge k}V_{\lambda=(2r+l-k)\epsilon_1+r\epsilon_2+\ldots+r\epsilon_{n-1}+(r-l)\epsilon_n}\otimes \Hom_{K,(k)}(V_\lambda,S^l(\C^n))
\]
and, for each
$\lambda=(2r+b-k)\epsilon_1+r\epsilon_2+\ldots+r\epsilon_{n-1}
+(r-b)\epsilon_n$ with $r\ge 0, 0\le  b\le r, r+b\ge k$, the operator $D''$
induces a bijection from $\Hom_{K,(k)}(V_\lambda,S^l(\C^n))$ to
$\Hom_{K,(k)}(V_\lambda,S^{l-1}(\C^n))$ for each $b< l\le r$.

The kernel of $D$ in each
$V_{\lambda=(2r+b-k)\epsilon_1+r\epsilon_2+\ldots+r\epsilon_{n-1}+(r-b)\epsilon_n}\otimes
\left(\oplus_{l=b}^r\Hom_{K,(k)}(V_\lambda,S^l(\C^n))\right)$ is
isomorphic to $V_\lambda$ if $r-b$ is even (and only involves
polynomials of the same parity as $r$) and this kernel is $0$ if $r-b$
is odd.
\end{proposition}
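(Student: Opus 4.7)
The plan is to prove the three kernel statements in sequence, exploiting two structural facts: each space $\Hom_{K,(k)}(V_\lambda,S^l(\C^n))$ is at most one dimensional by Theorem \ref{spino}, while $D'$ raises polynomial degree by one and $D''$ lowers it by one, each preserving every summand $V_\lambda\otimes\bigl(\oplus_{l=b}^{r}\Hom_{K,(k)}(V_\lambda,S^l(\C^n))\bigr)$.

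For the kernel of $D'$: the discussion immediately preceding the proposition already establishes, by evaluating on the carefully chosen highest-weight vector $w_{(l+1)}\in V_\lambda$ and using that $L_{(k,\lambda,l)}$ vanishes on any vector of weight strictly higher than $(2l-k)\epsilon_1+l\epsilon_2+\ldots+l\epsilon_n$, that $D^{'(\lambda,k)}L_{(k,\lambda,l)}\ne 0$ for $b\le l<r$. Since source and target are one-dimensional, $D^{'(\lambda,k)}:\Hom_{K,(k)}(V_\lambda,S^l)\to\Hom_{K,(k)}(V_\lambda,S^{l+1})$ is a bijection for $b\le l<r$; for $l=r$ the target $\Hom_{K,(k)}(V_\lambda,S^{r+1})$ vanishes (by the bound $l\le r$ in Theorem \ref{spino}), so $\Hom_{K,(k)}(V_\lambda,S^r)\subset\ker D'$. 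Summing over all admissible $\lambda$ and relabelling $r$ as $l$ gives the stated expression for $\ker D'$. For the kernel of $D''$ the argument is formally parallel: using (\ref{D''pnc}) in place of (\ref{D'pnc}), evaluate $D^{''(\lambda,k)}L_{(k,\lambda,l)}$ on a vector of appropriately low weight in $V_\lambda$ (selected so that $L_{(k,\lambda,l)}\circ\pi_{\lambda*}(E_{0n})$ acting on it lands on a monomial proportional to $z_n^{l-1}$). The same one-dimensionality argument then shows bijectivity of $D^{''(\lambda,k)}$ for $b<l\le r$ and the vanishing of the target for $l=b$. Alternatively one may invoke the formal adjointness $D^{''}=(D^{'})^{*}$ proved in \cite{refs:CahGuttRaw}: within each finite-dimensional piece above, adjointness forces $D''$ to be bijective on the targets of the (bijective) pieces of $D'$ and to vanish on the bottom rung.

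For the kernel of $D=D'+D''$: write $\varphi=\sum_{l=b}^{r}\varphi_l$ with $\varphi_l\in V_\lambda\otimes\Hom_{K,(k)}(V_\lambda,S^l)$. Since $D'\varphi_{l-1}\in S^l$ and $D''\varphi_{l+1}\in S^l$, the equation $D\varphi=0$ decouples by degree as
\[
D'\varphi_{l-1}+D''\varphi_{l+1}=0\qquad\text{for all }l,
\]
with the conventions $\varphi_{b-1}=\varphi_{r+1}=0$. The equation at $l=b$ reads $D''\varphi_{b+1}=0$, which by the bijectivity established in the $D''$-step forces $\varphi_{b+1}=0$ (when $b+1\le r$); iterating the equation at $l=b+2,b+4,\ldots$ then successively kills $\varphi_{b+3},\varphi_{b+5},\ldots$, i.e.\ all components whose degree has the parity opposite to $b$. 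The symmetric argument starting from $l=r$ (where $D'\varphi_{r-1}=0$ forces $\varphi_{r-1}=0$ by bijectivity of $D'$) kills all components whose degree has parity opposite to $r$. If $r-b$ is odd, the parities of $b$ and $r$ together exhaust $[b,r]$, so $\varphi=0$. If $r-b$ is even, only the parity common to $b$ and $r$ survives; on these degrees, each consecutive equation $D'\varphi_{l-1}=-D''\varphi_{l+1}$ uses the bijectivity of $D'$ to recover $\varphi_{l-1}$ uniquely from $\varphi_{l+1}$, so the whole tuple is determined by $\varphi_r$, giving $\ker D|_{V_\lambda\otimes\oplus_l\Hom}\cong V_\lambda\otimes\Hom_{K,(k)}(V_\lambda,S^r)\cong V_\lambda$.

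The main technical obstacle is the non-vanishing in the $D''$ step: the $D'$ calculation already in the paper relies on knowing that the highest-weight-type vectors $(\pi_{\lambda*}(E_{n0}))^{r-l-1}(\pi_{\lambda*}(E_{(n-1)0}))^b v_\lambda$ are not killed by $L_{(k,\lambda,l)}$, and the $D''$ analogue requires an analogous but dually chosen monomial whose image under $L_{(k,\lambda,l)}\circ\pi_{\lambda*}(E_{0\,\cdot})$ is a non-zero element of the one-dimensional space $\Hom_{K,(k)}(V_\lambda,S^{l-1})$. This obstacle is most cleanly bypassed by invoking the formal adjointness $D''=(D')^{*}$, after which the parity bookkeeping in Step 3 is entirely elementary linear algebra in one-dimensional pieces.
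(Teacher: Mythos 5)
Your proposal is correct, and it matches the paper's overall strategy: use the one-dimensionality of each $\Hom_{K,(k)}(V_\lambda,S^l(\C^n))$ from Theorem~\ref{spino}, the fact that $D'$ raises and $D''$ lowers polynomial degree within each block $V_\lambda\otimes\bigl(\oplus_{l=b}^r\Hom_{K,(k)}(V_\lambda,S^l(\C^n))\bigr)$, and the highest-weight-vector computation preceding the proposition to show the intermediate rungs are bijections. Where you genuinely deviate is in the $D''$ step: the paper disposes of $D''$ with a single ``Similarly,'' implicitly asking the reader to redo the weight computation with a suitably chosen low-weight vector; you correctly identify this as the nontrivial part of that ``Similarly'' and observe that invoking $D''=(D')^{*}$ (valid here because the symmetric connection is torsion-free, so the torsion-vector condition of \cite{refs:CahGuttRaw} holds, and the blocks are $L^2$-orthogonal by Peter--Weyl and degree) immediately transfers bijectivity. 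This is a cleaner route and avoids an explicit dual weight argument, at the cost of invoking the analytic adjointness result rather than a purely algebraic computation. Your degree-decoupling and parity bookkeeping for $\ker D$ is also a useful elaboration: the paper states the $\ker D$ result without proof, and your ladder argument --- kill $\varphi_{b+1},\varphi_{b+3},\ldots$ starting from degree $b$, kill $\varphi_{r-1},\varphi_{r-3},\ldots$ starting from degree $r$, then observe the two parity classes exhaust $[b,r]$ iff $r-b$ is odd, while for $r-b$ even the remaining equations $D'\varphi_{l-1}=-D''\varphi_{l+1}$ propagate $\varphi_r$ down uniquely --- is exactly what is needed and correctly recovers $\ker D\cong V_\lambda$ concentrated in degrees of the parity of $r$. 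No gaps.
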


%%%%%%%%%%%%%

\end{document}